\documentclass[11pt]{amsart}

\usepackage{amsmath}
\usepackage{amssymb}
\usepackage{amsthm}
\usepackage[backref=page]{hyperref}
\usepackage[bibtex-style]{amsrefs}
\usepackage[capitalize]{cleveref}
\usepackage{enumitem}
\usepackage{subcaption}
\usepackage{todonotes}
\usepackage{soul}

\usepackage{xcolor}
\usepackage{tikz}
\usepackage{thmtools}

\theoremstyle{plain}
\newtheorem{theorem}{Theorem}[section]
\newtheorem{proposition}[theorem]{Proposition}
\newtheorem{lemma}[theorem]{Lemma}
\newtheorem{corollary}[theorem]{Corollary}
\newtheorem{conjecture}[theorem]{Conjecture}

\theoremstyle{definition}
\newtheorem{definition}[theorem]{Definition}
\newtheorem{example}[theorem]{Example}
\newtheorem{remark}[theorem]{Remark}
\newtheorem{claim}{Claim}[theorem]
\newtheorem{subclaim}{}[claim]

\newcommand{\dash}{\nobreakdash-\hspace{0pt}}
\newcommand{\del}{\backslash}
\newcommand{\ZZ}{\mathsf{Z}}
\newcommand{\Bb}{\mathcal{B}}
\newcommand{\cB}{\mathcal{B}}
\newcommand{\bL}{\mathsf{L}}
\newcommand{\bU}{\mathsf{U}}
\newcommand{\bS}{\mathsf{S}}
\newcommand{\ab}{\textbf{ab}}
\newcommand{\comout}[1]{}
\newcommand{\arc}[1]{\ensuremath{\mathbf{#1}}}
\newcommand{\identity}[1]{\ensuremath{1_{#1}}}

\usepackage{comment}

\title[Excluded minors for biased graphs]{The excluded minors for $\ZZ_{3}$-gainable and regular biased graphs}

\author[Brettell]{Nick Brettell}

\author[Campbell]{Rutger Campbell}

\author[Funk]{Daryl Funk}

\author[Mayhew]{Dillon Mayhew}

\begin{document}

\begin{abstract}
We prove that a biased graph is gainable over the group~$\ZZ_{3}$ if and only if it contains no minor isomorphic to $(4K_{2},\emptyset)$, $\pm K_{3}$, or $-K_{4}$.
We develop a theory of ``partial groups" that is analogous to that of partial fields, and we use this theory to show that a biased graph is gainable over every non-trivial group if and only if it is gainable over $\ZZ_{2}$ and $\ZZ_{3}$.
 From this we derive an independent proof of the theorem due to Gerards that a biased graph is gainable over every non-trivial group if and only if it has no minor isomorphic to $(3K_{2},\emptyset)$, $\pm K_{3}$, or $-K_{4}$.
\end{abstract}

\maketitle

\section{Introduction}

Several famous theorems in graph theory and matroid theory characterise minor-closed classes by determining their excluded minors \cites{MR1045920, MR0227041, MR1050503, MR1339849, MR0101526, MR1513158, MR2821553, MR4633709, MR0986875}.
In the matroid context, excluded-minor characterisations for the class of matroids representable over some field are especially celebrated \cites{MR0101526, MR1769191, MR0532587, MR0532586}.
In contrast, there are few excluded-minor characterisations in the context of \emph{biased graphs}.

A biased graph is a pair $(G,\cB)$ where $G$ is a graph and $\cB$ is a collection of \emph{balanced} cycles of $G$, the only rule being that a theta subgraph of $G$ is forbidden from containing precisely two balanced cycles (a \emph{theta subgraph} is the union of three internally disjoint paths linking a pair of distinct vertices).

A prototypical example of biased graphs arises from graphs equipped with a function mapping orientated edges to a group, with opposite orientations assigned inverses of one another.
In this case, a cycle is defined to be balanced if and only if it contains a simple closed walk where the product of group assignments around that walk is the identity.
It is easy to verify that the theta property holds under this definition, and in fact, Zaslavsky defines the theta property as an abstraction of graphs labelled with group elements~\cite{MR1007712}.
A biased graph arising in this way is said to be \emph{gainable} over the group. 

A graph embedded in a surface provides another example of a biased graph, where $\mathcal B$ is the collection of contractible cycles.
There is an intrinsic connection between topology and gainability: given a biased graph $(G,\cB)$, let $\mathcal K$ be the 2-cell complex obtained by adding to $G$ a disc with boundary~$C$ for each cycle $C \in \mathcal B$.
Then there exists a group over which $(G,\Bb)$ is gainable if and only if $(G,\Bb)$ is gainable over the fundamental group of $\mathcal K$; moreover, if $(G,\Bb)$ is gainable over a particular group $\Gamma$ then $\Gamma$ is a quotient of the fundamental group of $\mathcal K$ \cite{Funk15}.

The class of biased graphs that are gainable over a particular group is analogous to the class of matroids representable over a particular field: in both cases, the structure of the discrete object is encoded by algebraic labels.
The minor order of biased graphs is analogous to the minor order of graphs or matroids.
The biased graphs gainable over a group form a natural example of a minor-closed class~\cite{MR1007712}*{Corollary~5.7} (see also~\cref{prop:gaining-minor}).
So we are motivated to ask, for a group $\Gamma$, what are the excluded minors for the class of $\Gamma$\dash gainable biased graphs?
Up until now, this question has been answered only once: Zaslavsky characterised the class of 
biased graphs gainable over the cyclic group of order two, showing that there is a single excluded minor for the class.
When $G$ is a graph and $n$ is a positive integer, we let $nG$ denote the graph obtained from $G$ by replacing each non-loop edge by a parallel class of $n$ edges.
We denote the cyclic group of order $n$ by $\ZZ_n$.
Zaslavsky proved the following in 1981:

\begin{theorem}[\cite{Zas81}*{Theorem~6}]
\label{thm:Zas81}
The only excluded minor for the class of $\ZZ_{2}$\dash gainable biased graphs is $(3K_{2},\emptyset)$.
\end{theorem}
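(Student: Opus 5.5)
We prove two things: that $(3K_{2},\emptyset)$ is not $\ZZ_{2}$\dash gainable while each of its proper minors is, and that every biased graph with no $(3K_{2},\emptyset)$\dash minor is $\ZZ_{2}$\dash gainable. The first part is a direct check. Suppose a $\ZZ_{2}$\dash gain function on $3K_{2}$ made all three digons unbalanced. Then the three edges carry pairwise distinct labels in $\ZZ_{2}$, which has only two elements, a contradiction. Deleting an edge leaves $(2K_{2},\emptyset)$, which is gainable using labels $0$ and $1$. Contracting a (non-loop) edge turns the other two edges into unbalanced loops at a single vertex, which are gainable by giving each loop label $1$. Any further minor is gainable because gainability is minor-closed (\cref{prop:gaining-minor}).

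For the converse, I would first reduce to a spanning tree. Let $(G,\cB)$ have no $(3K_{2},\emptyset)$\dash minor. We may assume $G$ is connected, since components can be handled separately. Unbalanced loops get label $1$ and balanced loops get label $0$, so we may also assume $G$ is loopless. Fix a spanning tree $T$ and give every tree edge label $0$. For a non-tree edge $e$, give it label $0$ if its fundamental cycle $C_{e}$ is balanced, and $1$ otherwise. This labelling is correct on every fundamental cycle. It remains to show that a cycle $C$ is balanced if and only if the sum of its labels is $0$, i.e.\ if and only if $C$ contains an even number of non-tree edges whose fundamental cycles are unbalanced.

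The key structural fact is this: in a biased graph with no $(3K_{2},\emptyset)$\dash minor, no theta subgraph has all three cycles unbalanced. If some theta did, contracting two of its paths down to single edges and the third path down to one edge would give three parallel edges. By the theta property, no balanced cycle can be created or destroyed along the way, so all three digons stay unbalanced and we obtain $(3K_{2},\emptyset)$ as a minor. One has to be careful that contraction keeps the relevant cycles unbalanced, but contracting edges on the paths of a theta preserves the balance of its cycles by the definition of contraction in biased graphs. Combined with the theta property, this shows that every theta contains an even number of unbalanced cycles: either zero or two. Equivalently, the indicator of unbalance is additive modulo $2$ on the three cycles of any theta.

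Finally, I would induct on the number $k$ of non-tree edges in $C$; this step is the main obstacle. The case $k\le1$ is handled by the construction. For $k\ge2$, pick a non-tree edge $e$ of $C$ and consider $C\cup C_{e}$. The standard approach is to find a theta formed by $C$, $C_{e}$ and their symmetric difference. In general, however, $C\cap C_{e}$ need not be a single path, so the union is not a theta. To get around this, I would induct using a path of $C_{e}$ that leaves $C$ and returns to it. Such a path splits $C$ into a theta whose two other cycles have fewer non-tree edges, or share tree structure in a controlled way. The parity additivity on this theta, together with the induction hypothesis applied to the two smaller cycles, then gives the claim. The delicate point is choosing the chord path so that both new cycles have strictly fewer non-tree edges. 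For this I would choose $T$ to be a BFS tree, or alternatively measure progress by $|C\setminus T|$ plus the length of $C$ to ensure the process terminates.
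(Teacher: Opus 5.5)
Your first part is correct: $(3K_{2},\emptyset)$ is not $\ZZ_{2}$\dash gainable, and its single-edge deletions and contractions are. The reductions to a connected, loopless graph with a spanning tree $T$ are correct. So is your parity lemma: no theta has three unbalanced cycles, so by the theta property every theta has zero or two. This is the right strategy. The paper gives no proof of this theorem (it is quoted from Zaslavsky), so there is nothing there to follow. The gap is the induction step, which you flag as the main obstacle but do not carry out. ``Share tree structure in a controlled way'', a BFS tree, and a modified potential are not arguments, and none of them is needed.

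The missing observation is short. Let $P$ be any path of $T$ with distinct ends $x,y\in V(C)$ and interior disjoint from $V(C)$. Let $C_{1},C_{2}$ be the two cycles of the theta $C\cup P$ other than $C$. Each $C_{i}$ is $P$ together with one of the two $x$\dash $y$ arcs of $C$. If that arc contained no non-tree edge, $C_{i}$ would be a cycle inside $T$, which is impossible. Hence $C_{1}\setminus T$ and $C_{2}\setminus T$ partition the non-tree edges of $C$ into two non-empty sets, each of size less than $k$. This holds for every spanning tree and every such $P$.

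Such a $P$ exists when $k\geq 2$. Since $C\neq C_{e}$, the tree path $C_{e}-e$ has an edge outside $C$. The maximal subpath of $C_{e}-e$ through that edge whose interior avoids $V(C)$ works. Alternatively, as in the paper's proof of \cref{lem:homomorphisms}, pick two non-tree edges $e,f$ of $C$ and a minimal $T$\dash path joining the two components of $C-\{e,f\}$.

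To finish, write $u(D)\in\{0,1\}$ for the indicator that a cycle $D$ is unbalanced and $\lambda$ for your labels. Induction gives $u(C_{i})\equiv\sum_{g\in C_{i}\setminus T}\lambda(g) \pmod 2$. Your parity lemma gives $u(C)\equiv u(C_{1})+u(C_{2}) \pmod 2$. Together these yield $u(C)\equiv\sum_{g\in C\setminus T}\lambda(g) \pmod 2$, which completes the step.
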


We find the excluded minors in the natural next case: the class of $\ZZ_{3}$\dash gainable biased graphs.
Let $G$ be a graph.
We write $-G$ to denote the biased graph $(G, \cB)$ where $\cB$ consists of the collection of even-length cycles of $G$.
We obtain a biased graph that we denote $\pm G$ by replacing each edge of $G$ with a parallel class of two edges, one positive and one negative, where a cycle is balanced if and only if it contains an even number of negative edges.

\begin{theorem}
\label{thm:Z3-excluded-minors}
The only excluded minors for the class of $\ZZ_{3}$\dash gainable biased graphs are $(4K_{2},\emptyset)$, $\pm K_{3}$, and $-K_{4}$.
\end{theorem}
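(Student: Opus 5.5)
The proof has two halves. First we show that each of $(4K_{2},\emptyset)$, $\pm K_{3}$ and $-K_{4}$ fails to be $\ZZ_{3}$\dash gainable while each of its single-element deletions and contractions is $\ZZ_{3}$\dash gainable. Then we show that every biased graph that is not $\ZZ_{3}$\dash gainable has one of these three as a minor.

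\textbf{The three listed graphs are excluded minors.} For $(4K_{2},\emptyset)$, we may switch so that one edge has gain identity. The other three edges must then receive distinct non-identity gains, which is impossible since $\ZZ_{3}$ has only two. Deleting an edge leaves $(3K_{2},\emptyset)$, which is gainable with gains $0,1,2$. Contracting an edge turns the others into unbalanced loops, and any non-zero gain serves for these.

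For $\pm K_{3}$, normalise a spanning tree and write the gains on the six edges. The balance conditions force the two edges of each parallel pair to have distinct gains. They also force the triangle sums to vanish exactly for the even triangles. A short case check mod $3$ gives a contradiction.

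For $-K_{4}$, the balanced cycles are exactly the $4$\dash cycles. Normalise a star at one vertex, say with gains $a,b,c$ on the three remaining edges. The three $4$\dash cycles give linear equations in $a,b,c$, and the triangles require nonzero sums. Over $\ZZ_{3}$ these turn out to be inconsistent. I expect the $4$\dash cycle equations to force $a=b=c$ up to sign, and then some triangle to sum to $0$.

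Minimality of both $\pm K_{3}$ and $-K_{4}$ is checked by exhibiting explicit gainings for their single-element minors, up to symmetry.

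\textbf{Every non-gainable biased graph has one of these minors.} Let $(G,\cB)$ be a minor-minimal biased graph that is not $\ZZ_{3}$\dash gainable, and assume it has no minor in the list. Standard reductions (\cref{prop:gaining-minor} and gluing along cut vertices and 2-separations) show that $G$ may be taken $2$\dash connected. They also show there are no balanced loops and that the graph is not balanced.

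Since $(3K_{2},\emptyset)$ may or may not be present, we split on \cref{thm:Zas81}.

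Suppose $(G,\cB)$ has no $(3K_{2},\emptyset)$ minor. Then it is $\ZZ_{2}$\dash gainable. Having no $-K_{4}$ and no $\pm K_{3}$ minor means the signed graph is close to balanced; I would use the structure of signed graphs without these minors (projective-planar type or ``tangled'' structure) to recover a $\ZZ_{3}$ gaining by replacing $-1$ by suitable elements of $\ZZ_{3}$ consistently. This step is plausible but not automatic.

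Suppose instead $(G,\cB)$ has a $(3K_{2},\emptyset)$ minor but no $(4K_{2},\emptyset)$ minor. Choose a deletion $e$ with $(G\setminus e,\cB')$ gainable. For $3$\dash connected graphs the $\ZZ_{3}$\dash gaining is essentially unique up to switching and automorphism, by a uniqueness argument through contraction-pairs. We then try to extend the gaining to $e$ by checking the induced value on $e$ from each cycle through $e$. A conflict between two cycles yields a small configuration, and we reduce it by contractions to one of the three excluded graphs.

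\textbf{Main obstacle.} The hardest step is the uniqueness-and-extension argument: showing that a conflict forces a listed minor. I would first try a splitter-style induction, growing from a $(3K_{2},\emptyset)$ minor by adding one element at a time while preserving $2$\dash connectivity.
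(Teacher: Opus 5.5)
Your verification that $(4K_{2},\emptyset)$, $\pm K_{3}$ and $-K_{4}$ are excluded minors is fine and matches the paper. For $-K_{4}$, the $4$-cycle equations give $a+b=b+c=c+a=0$, so $2a=0$ and hence $a=0$, which balances a triangle through the normalised star.

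\textbf{The converse direction is not proved.} It contains no actual argument.

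In Case A, the claim that a $\ZZ_{2}$-gainable biased graph with no $\pm K_{3}$ or $-K_{4}$ minor is $\ZZ_{3}$-gainable is not a routine consequence of an unnamed ``projective-planar or tangled'' structure theorem. It is essentially the signed-graph case of Gerards' characterisation of regular biased graphs, and ``replacing $-1$ consistently'' is exactly what would need proof. You could close Case A by citing Gerards outright, but then Case B carries essentially the whole theorem.

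In Case B, the sentence ``a conflict between two cycles yields a small configuration, and we reduce it by contractions to one of the three excluded graphs'' is the entire difficulty, and it is asserted without a mechanism. Three problems stand out:
\begin{itemize}
\item An obstruction to extending a gaining of $\Omega\backslash e$ to $e$ need not come from a pair of cycles. In $(4K_{2},\emptyset)$ itself, three unbalanced cycles through $e$ jointly exclude all three values, while any two of them are compatible.
\item Nothing in your plan shows that some contraction preserves the conflict while the rest stays uniquely gainable.
\item Your uniqueness statement is only for $3$-connected graphs. You have reduced only to $2$-connected graphs, and uniqueness must be applied to $\Omega\backslash e$ and to its minors, which need not be $3$-connected.
\end{itemize}

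\textbf{The missing idea is the paper's two-edge ``twin''.} The argument runs as follows.
\begin{itemize}
\item Dirac's bound on degree-two vertices of minimally $2$-connected graphs gives edges $e,f$ with $\Omega\backslash e$, $\Omega\backslash f$ and $\Omega\backslash\{e,f\}$ all $2$-connected.
\item Using rounded orderings, the paper shows that $\ZZ_{3}$-gainings of any connected biased graph with no proper $1$-separation are unique up to switching and an automorphism of $\ZZ_{3}$. Note that this needs only the absence of proper $1$-separations, not $3$-connectivity.
\item When $\Omega\backslash\{e,f\}$ has a non-loop unbalanced cycle, this yields a unique $\ZZ_{3}$-gainable $\Omega_{\mathrm{twin}}$ with $\Omega_{\mathrm{twin}}\backslash e=\Omega\backslash e$ and $\Omega_{\mathrm{twin}}\backslash f=\Omega\backslash f$.
\item Every ``bad'' cycle, where $\Omega$ and $\Omega_{\mathrm{twin}}$ disagree, contains both $e$ and $f$. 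Since uniqueness passes to suitable minors, a bad cycle must be Hamiltonian, and every cycle through one of its proper chords is unbalanced.
\item Two vertices force $(nK_{2},\emptyset)$ and hence a $(4K_{2},\emptyset)$ minor. Three vertices force $\pm K_{3}$, a $(4K_{2},\emptyset)$ minor, or a gaining. Four or more vertices give a contradiction.
\item The needed unbalanced cycle in $\Omega\backslash\{e,f\}$ exists because balancing pairs are treated first, via inconsistent pairs of cycles, and that is exactly where $-K_{4}$ arises.
\end{itemize}

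Without a canonical comparison object like $\Omega_{\mathrm{twin}}$, your single-edge extension has no way to localise a conflict. As written, the proposal establishes only that the three graphs are excluded minors, not that they are the only ones.
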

    
\begin{figure}[htb]

\centering

\begin{tikzpicture}[nodestyle/.style = {circle, inner sep = 0mm, minimum size = 2mm, draw = black, thick, fill = white}]

\begin{scope}[xshift = -4cm]
    
\node (bottom) at (0,-1) {};
\node (top) at (0,1) {};

\path (-1.25,-1.25)--(1.25,-1.25)--(1.25,1.25)--(-1.25,1.25)--(-1.25,-1.25);

\draw[black, thick] (0,-1)..controls (-.75,0)..(0,1);
\draw[red, thick] (0,-1)..controls (-.25,0)..(0,1);
\draw[blue, thick] (0,-1)..controls (.25,0)..(0,1);
\draw[green!80!black, thick] (0,-1)..controls (.75,0)..(0,1);

\node[nodestyle] at (bottom) {};
\node[nodestyle] at (top) {};

\node at (0,-2) {(a) $4K_{2}$};

\end{scope}

\begin{scope}[yshift = -2mm]

\node (top) at (0,1) {};
\node (left) at (.866,-.5) {};
\node (right) at (-.866,-.5) {};

\def \d {.25}

\draw[black, thick] (-.866,-.5)..controls (-.433+\d*.866,.25-\d*.5)..(0,1);
\draw[red, thick] (-.866,-.5)..controls (-.433-\d*.866,.25+\d*.5)..(0,1);

\draw[black, thick] (0,1)..controls (.433-\d*.866,.25-\d*.5)..(.866,-.5);
\draw[red, thick] (0,1)..controls (.433+\d*.866,.25+\d*.5)..(.866,-.5);

\draw[black, thick] (.866,-.5)..controls (0,\d-.5)..(-.866,-.5);
\draw[red, thick] (.866,-.5)..controls (0,-\d-.5)..(-.866,-.5);

\node[nodestyle] at (top) {};
\node[nodestyle] at (left) {};
\node[nodestyle] at (right) {};

\node at (0, -2) {(b) $\pm K_{3}$};

\end{scope}

\begin{scope}[xshift = 4cm, yshift = -2mm]

\node (top) at (0,1) {};
\node (left) at (.866,-.5) {};
\node (right) at (-.866,-.5) {};
\node (center) at (0,0) {};

\draw[red, thick] (-.866,-.5)--(0,1);
\draw[red, thick] (0,1)--(.866,-.5);
\draw[red, thick] (.866,-.5)--(-.866,-.5);
\draw[red, thick] (0,0)--(0,1);
\draw[red, thick] (0,0)--(.866,-.5);
\draw[red, thick] (0,0)--(-.866,-.5);

\node[nodestyle] at (top) {};
\node[nodestyle] at (left) {};
\node[nodestyle] at (right) {};
\node[nodestyle] at (center) {};

\node at (0, -2) {(c) $-K_{4}$};

\end{scope}

\end{tikzpicture}

\caption{The excluded minors for $\ZZ_{3}$\dash gainable biased graphs, where a cycle is balanced if and only if it contains an even number of edges of each (non-black) colour.
}
\label{fig:Z3-excluded-minors}
\end{figure}

Rota famously conjectured that when $\mathbb{F}$ is a finite field, the class of $\mathbb{F}$\dash representable matroids has only finitely many excluded minors~\cite{MR0505646}.
We conjecture that when $\Gamma$ is a finite group, the class of $\Gamma$\dash gainable biased graphs has only finitely many excluded minors.
\cref{thm:Zas81,thm:Z3-excluded-minors} correspond to the first two non-trivial cases of this conjecture.
In fact, we make the following stronger conjecture, which combines the biased-graph variant of Rota's Conjecture with the statement that $\Gamma$\dash gainable biased graphs are well-quasi-ordered when $\Gamma$ is finite.

\begin{conjecture}
\label{biased-Rota}
Let $\Gamma$ be a finite group and let $\mathcal{G}$ be a minor-closed class of $\Gamma$\dash gainable biased graphs.
There are only finitely many biased graphs that are excluded minors for $\mathcal{G}$.
\end{conjecture}

Geelen, Gerards, and Whittle have announced a structural theorem for minor-closed class of $\mathbb{F}$\dash representable matroids (when $\mathbb{F}$ is a finite field) that generalises the structural theorem of Robertson and Seymour on minor-closed classes of graphs~\cite{MR3184116}.
\cref{biased-Rota} is a significant generalisation of the Graph Minor Theorem in another direction --- with $\Gamma$ equal to the trivial group, Conjecture~\ref{biased-Rota} would imply the result that any minor-closed class of graphs has only finitely many excluded minors.
Conjecture~\ref{biased-Rota} fails when $\Gamma$ is not finite, because in this case there are infinitely many excluded minors for the class of $\Gamma$\dash gainable biased graphs~\cite{Funk15}*{Theorem~2.12}.

Assume that $\mathcal{H}$ is a set of at least two groups.
One could also attempt to find an excluded-minor characterisation of the biased graphs that are gainable over every group in $\mathcal{H}$.
The only such characterisation known so far is due to Gerards~\cite{MR1106635}, who found the excluded minors for the biased graphs that are gainable over every non-trivial group.  We say that a biased graph is \emph{regular} if it is gainable over every non-trivial group.  By developing a notion of ``partial groups'' (analogous to partial fields, as used in matroid representation theory~\cites{MR1390574,MR2718674,MR2563513}), we prove the following:

\begin{restatable}{theorem}{mainone}
    \label{thm:main1}
    Let $\Omega$ be a biased graph.
    Then $\Omega$ is regular if and only if $\Omega$ is $\ZZ_2$\dash gainable and $\ZZ_3$\dash gainable. 
\end{restatable}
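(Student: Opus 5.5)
The direction ``regular $\Rightarrow$ $\ZZ_2$- and $\ZZ_3$-gainable'' is immediate from the definition, so the work is in the converse. I would mimic the partial-field proof that a matroid representable over $\mathrm{GF}(2)$ and $\mathrm{GF}(3)$ is representable over the regular partial field $\mathbb{U}_0=(\mathbb{Q},\{0,\pm1\})$. The plan is to define a \emph{partial group} as a pair $(\Gamma,P)$, where $\Gamma$ is a group and $P\subseteq\Gamma$ is closed under inverses and contains the identity. A biased graph $(G,\cB)$ is $(\Gamma,P)$-gainable if it has a $\Gamma$-gain function, normalised to be the identity on some maximal forest, such that:
\begin{itemize}
\item every edge gain lies in $P$;
\item the gain of every cycle (read as a word in the edge gains) lies in $P$;
\item a cycle is balanced exactly when its gain is the identity.
\end{itemize}
A \emph{homomorphism} $(\Gamma,P)\to(\Gamma',P')$ is a map $P\to P'$ that respects the partial products actually occurring and sends non-identity elements to non-identity elements. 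The first, routine, lemma is that gainability over $(\Gamma,P)$ transfers along such homomorphisms, by composing gain functions; the argument is analogous to \cref{prop:gaining-minor}.

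The key choice is the \emph{regular partial group} $\mathbb{U}=(\mathbb{Z},\{-1,0,1\})$, written additively. Under it, a biased graph is $\mathbb{U}$-gainable if some integer gain function with values in $\{-1,0,1\}$, normalised on a maximal forest, gives every cycle a gain sum in $\{-1,0,1\}$, with balanced cycles being exactly those of sum $0$. For any non-trivial group $\Gamma$, pick $g\neq 1$ and send $k\mapsto g^k$. A cycle of gain $0$ goes to $1$, and a cycle of gain $\pm1$ goes to $g^{\pm1}\neq1$. Although $\Gamma$ may be non-abelian, the cycle product is a word in $g^{\pm1}$ only, so it equals $g^{k}$ where $k$ is the integer sum. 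Hence $\mathbb{U}$-gainable implies regular. The same map also gives $\mathbb{U}$-gainable $\Rightarrow$ $\ZZ_2$- and $\ZZ_3$-gainable, since $\pm1\not\equiv0$ modulo $2$ and modulo $3$.

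It remains to prove the lifting statement: if $\Omega$ is both $\ZZ_2$- and $\ZZ_3$-gainable, then $\Omega$ is $\mathbb{U}$-gainable. I would proceed as follows.
\begin{enumerate}
\item Reduce to the case that $G$ is $2$-connected and loopless, using the facts that gainability is preserved under $1$-sums, that balanced loops can be ignored, and that unbalanced loops only add the single unbalanced label.
\item Fix a maximal forest $T$ and take a $\ZZ_3$-gain function normalised on $T$. Lift its values $\{0,1,2\}$ to the integers $\{0,1,-1\}$.
\item Show that, after suitable switching and negation of labels, every cycle's integer sum lies in $\{-1,0,1\}$.
\end{enumerate}
For step 3, the $\ZZ_2$-gainability is exactly what should forbid sums of $\pm2$ and $\pm3$. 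Such sums would arise from configurations like three pairwise unbalanced parallel edges, whose presence contradicts \cref{thm:Zas81}.

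I would prove step 3 by induction on $|E(G)|$, deleting or contracting an edge $e\notin T$ and applying the inductive hypothesis to the resulting minor. Doing so needs a \emph{uniqueness lemma}: for suitably connected $\Omega$, a normalised $\ZZ_3$-gain function is unique up to switching and the automorphism $x\mapsto -x$ of $\ZZ_3$. With uniqueness, the integer lifts obtained from $\Omega\backslash e$ and $\Omega\backslash f$ can be made to agree and then glued, as in the Brylawski--Lucas style argument for regular matroids.

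I expect this uniqueness-and-gluing step to be the main obstacle. $\ZZ_3$-gains of biased graphs are not unique in general, for instance across $2$-separations, and cycle gains in a non-abelian target prevent a naive linear-algebra argument. My first attempt would be a minimal-counterexample argument. I would show that a minimal counterexample is $3$-connected and is not balanced-plus-one-vertex. I would then analyse a theta subgraph through $e$ whose cycle sums escape $\{-1,0,1\}$, and aim to exhibit a $(3K_2,\emptyset)$ minor. That would contradict $\ZZ_2$-gainability via \cref{thm:Zas81}.
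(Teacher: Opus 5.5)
Your framework is the paper's: $(\mathbb{Z},\{-1,0,1\})$ is the regular partial group $\bU_1$, and $k\mapsto g^{k}$ correctly shows that $\mathbb{U}$\dash gainable biased graphs are regular. The gap is the converse lifting statement, which is the whole content of the theorem, and you do not prove it. Step~3 is only a programme: ``suitable switching and negation'', an induction with a uniqueness lemma and Brylawski--Lucas gluing, and finally a minimal counterexample from which you hope to extract a $(3K_{2},\emptyset)$ minor. By \cref{thm:Zas81}, that last step is essentially a restatement of the theorem itself, so the plan is circular exactly where it matters.

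The missing idea is to use the two gainings \emph{simultaneously}. Normalise a $\ZZ_{2}$\dash gaining and a $\ZZ_{3}$\dash gaining on the same maximal forest $T$. In both, a non-forest edge receives the identity exactly when its fundamental cycle is balanced. So the coordinatewise $\ZZ_{2}\times\ZZ_{3}$\dash gaining gives every edge and every cycle a gain in $\{(1,1),(\omega,x),(\omega,x^{2})\}$. Apart from products involving $(1,1)$, the only products of two elements of this set that stay inside it are $(\omega,x)$ times $(\omega,x^{2})$ in either order, which equal $(1,1)$. Hence $(\omega,x)\mapsto\alpha$, $(\omega,x^{2})\mapsto\alpha^{-1}$ is a partial-group homomorphism to $\bU_1$. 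This is exactly the paper's argument via \cref{lem:products} and the map $\ZZ_{2}\otimes\Gamma\to\bU_1$.

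Concretely, your step~2 lift already works, with no switching or negation. For a cycle $C$ let $s_{C}$ be its integer gain sum. Its nonzero terms are $\pm1$, one for each edge of $C-T$ with non-identity $\ZZ_{3}$\dash gain. These are exactly the edges of $C-T$ with $\ZZ_{2}$\dash gain $\omega$, so $s_{C}$ is even if and only if $C$ is balanced. Also $3\mid s_{C}$ if and only if $C$ is balanced. Therefore $s_{C}\equiv 0 \pmod 6$ if $C$ is balanced and $s_{C}\equiv \pm1 \pmod 6$ otherwise.

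Now induct on $|C-T|$; the case $|C-T|=1$ is trivial. If $e,f\in C-T$ are distinct, a minimal path of $T$ joining the two components of $C-\{e,f\}$ forms a theta with $C$. Its other cycles $C_{e}\ni e$ and $C_{f}\ni f$ have fewer non-forest edges, and $s_{C}=s_{C_{e}}+s_{C_{f}}$ because forest edges contribute $0$. So $|s_{C}|\le 2$, which forces $s_{C}\in\{-1,0,1\}$.

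This theta induction is the proof of \cref{lem:homomorphisms}. It is also why that lemma is not ``routine composition'' once the map is only a partial-group homomorphism: partial products of edge gains along a walk can leave the distinguished set (e.g.\ $\alpha\cdot\alpha$). Your reduction to $2$\dash connected graphs, the uniqueness lemma, and \cref{thm:Zas81} are not needed.
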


This naturally leads to an independent proof of the following, first proved by Gerards~\cite{MR1106635}:

\begin{restatable}{theorem}{maintwo}
    \label{thm:main2}
    The excluded minors for the class of regular biased graphs are $(3K_2,\emptyset)$, $\pm K_3$, and $-K_4$.
\end{restatable}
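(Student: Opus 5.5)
We deduce \cref{thm:main2} from \cref{thm:main1}, \cref{thm:Zas81}, and \cref{thm:Z3-excluded-minors}. By \cref{thm:main1}, the class of regular biased graphs is the intersection of the $\ZZ_2$\dash gainable and $\ZZ_3$\dash gainable classes. Both are minor-closed by \cref{prop:gaining-minor}, so their intersection is minor-closed too. A biased graph is then an excluded minor for the intersection exactly when it is a minor-minimal member of the union $\{(3K_2,\emptyset)\}\cup\{(4K_2,\emptyset),\pm K_3,-K_4\}$ of the two excluded-minor sets, taken up to minor containment. This holds because a biased graph fails to be regular if and only if it contains some member of this union as a minor.

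The main task is to determine which members of the union are minor-minimal. First, $(3K_2,\emptyset)$ is a minor of $(4K_2,\emptyset)$: delete one edge. So $(4K_2,\emptyset)$ is not minimal and is discarded. For the remaining three, $(3K_2,\emptyset)$, $\pm K_3$ and $-K_4$, we must check that none contains another as a proper minor.

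\begin{itemize}
\item \textbf{$(3K_2,\emptyset)$ is not a minor of $\pm K_3$ or $-K_4$.} A minor of $\pm K_3$ is $\ZZ_2$\dash gainable, since sign gains restrict. Similarly $-K_4$ is $\ZZ_2$\dash gainable, taking every edge negative. Since $(3K_2,\emptyset)$ is not $\ZZ_2$\dash gainable by \cref{thm:Zas81}, it cannot be a minor of either.
\item \textbf{$\pm K_3$ and $-K_4$ are not minors of $(3K_2,\emptyset)$.} They have more edges than $(3K_2,\emptyset)$.
\item \textbf{$\pm K_3$ and $-K_4$ are not minors of each other.} By \cref{thm:Z3-excluded-minors} both are excluded minors for the same minor-closed class, so neither is a proper minor of the other. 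They are not isomorphic, since they have different numbers of vertices.
\end{itemize}

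The only potential subtlety is whether minors of $\ZZ_2$\dash gainable biased graphs remain $\ZZ_2$\dash gainable under the paper's precise minor operations, such as contraction of unbalanced edges. This is exactly \cref{prop:gaining-minor}, so no real obstacle remains. The substance lies entirely in \cref{thm:main1} and \cref{thm:Z3-excluded-minors}.
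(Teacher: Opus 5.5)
Your argument is correct and follows the paper's route exactly: the paper states that \cref{thm:main2} is an immediate consequence of \cref{thm:main1} together with \cref{thm:Zas81,thm:Z3-excluded-minors}, since the excluded minors for the intersection of the two classes are the minor-minimal members of the union of their excluded-minor lists. You make explicit the routine antichain check that the paper leaves implicit: you discard $(4K_{2},\emptyset)$ because deleting an edge gives $(3K_{2},\emptyset)$, and you confirm that $(3K_{2},\emptyset)$, $\pm K_{3}$, and $-K_{4}$ are pairwise incomparable using $\ZZ_{2}$\dash gainability, edge counts, and the fact that the $\ZZ_{3}$ excluded minors form an antichain.
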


We also show the following:
\begin{theorem}
    \label{thm:main3}
    A biased graph $\Omega$ is gainable over all groups of size at least three if and only if $\Omega$ is $\ZZ_3$\dash gainable and $\ZZ^2_2$\dash gainable.
\end{theorem}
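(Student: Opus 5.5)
One direction is immediate. $\ZZ_3$ and $\ZZ_2^2$ are groups of size at least three, so a biased graph gainable over every such group is gainable over both. For the converse, I would use the partial-group machinery that the paper develops for \cref{thm:main1}. The model is the matroid argument that a matroid representable over two fields is representable over a suitable partial field, and hence over every field that receives a homomorphism from that partial field.

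Assume $\Omega$ is both $\ZZ_3$\dash gainable and $\ZZ_2^2$\dash gainable. The plan has three steps.

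\begin{enumerate}[label=(\arabic*)]
\item Fix a maximal forest $T$ and normalise both gain functions to be the identity on $T$, which we may do by switching. Then $\Omega$ is determined by the labels on the non-forest edges. The balanced cycles are exactly those whose fundamental-cycle products are trivial, and the theta property says the labels on any theta must be consistent.
\item Form the product representation in $\ZZ_3\times\ZZ_2^2$, taking the pair of gains coordinatewise. A cycle is balanced in $\Omega$ if and only if both coordinates are trivial. This holds because the two gain functions each realise the same $\cB$ on their own, so either coordinate alone already decides balance.
\item Show that the pairs occurring as labels can be drawn from a small "partial group" $P$. I would take $P$ to be the set of pairs where both coordinates are trivial or both are non-trivial. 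Balance must be "determined" compatibly: a product is trivial in one coordinate exactly when it is trivial in the other. Then I would construct, for every group $\Gamma$ with $|\Gamma|\ge 3$, a homomorphism of partial groups $P\to\Gamma$ that maps non-identity elements to non-identity elements and preserves the relevant triviality of products. Composing the $P$\dash gain function with this map gives a $\Gamma$\dash gain function with the same balanced cycles.
\end{enumerate}

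The main obstacle is the universal partial group in step (3). I need the "universal" structure that a $\ZZ_3$\dash gainable and $\ZZ_2^2$\dash gainable biased graph admits, and its relations are generated by short cycles: products of two or three labels being trivial. Concretely, in $\ZZ_3$, three non-identity elements multiply to the identity only as $a,a,a$ with $a\neq 1$. In $\ZZ_2^2$, they do so only as $x,y,xy$ with $x,y,xy$ distinct. A pair of non-identity elements is trivial only as $a,a^{-1}$.

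So I expect the partial group to be generated by elements $g$ subject to relations of these two kinds simultaneously. Forcing both kinds at once should leave only the relations that hold in every group of order at least three. In particular, three pairwise distinct non-identity elements with trivial product, none an inverse of another, must be realisable in any group of size at least three. $\ZZ_3$ does this via $a,a,a$ and $\ZZ_2^2$ via $x,y,xy$; the combined constraint just says "three non-identity elements multiplying to $1$". This is achievable in any group of order at least $3$: take $g$, $h$ with $g,h,gh\ne1$, which exist if $|\Gamma|\ge3$.

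I would try first to reduce to the cases already handled in \cref{thm:main1}, with $\ZZ_2^2$ replacing $\ZZ_2$ in that argument. The role of $\ZZ_2$ there, which excludes the "three distinct elements" configurations, is exactly what fails for $\ZZ_2$ itself as a target and succeeds once $|\Gamma|\ge3$.
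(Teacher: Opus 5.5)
\textbf{There is a genuine gap in step (3).} Your architecture is the paper's. By \cref{lem:products}, $\Omega$ is gainable over $P=\ZZ_3\otimes\ZZ_2^2$, which is exactly your set of pairs. By \cref{lem:homomorphisms}, it then suffices to give a partial-group homomorphism $P\to\Gamma$ for each $\Gamma$ with $|\Gamma|\ge 3$; the paper factors this map through $\bU^{\ab}_2$. You do not need to reason about relations ``generated by short cycles'', because \cref{lem:homomorphisms} already handles closed walks. What remains is a finite check of \cref{def:homomorphism} on $P$, and that map and its verification are the whole content of the theorem. Your proposal never supplies them, and where it becomes concrete it is wrong.

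\textbf{What the homomorphism must satisfy.} Write the non-identity elements of $P$ as $(z^i,w)$ with $i\in\{1,2\}$ and $w\in\{x,y,xy\}$. A product of two elements of $P$ lies in $P$ only as $(z^i,w)(z^{-i},w)=(1,1)$, or as $(z^i,w)(z^i,w')=(z^{2i},w'')$ where $\{w,w',w''\}=\{x,y,xy\}$. The second relation holds for both orders of $w,w'$. So the values $a_w=\phi(z,w)$ must be non-identity, pairwise commuting, and satisfy $a_xa_ya_{xy}=1$, with $\phi(z^2,w)=a_w^{-1}$. Conversely, any such triple defines a homomorphism.

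\textbf{Why your recipe fails.} ``Take $g,h$ with $g,h,gh\ne 1$'' drops commutation, and this genuinely breaks. On $K_4$, label the star at vertex $4$ by $(1,1)$ and the arcs $1\to2$, $2\to3$, $3\to1$ by $(z,y)$, $(z,x)$, $(z,xy)$. This is a $P$\dash gaining; every cycle gain lies in $P$, and the triangle $123$ is balanced. Now send $(z,x)\mapsto g$, $(z,y)\mapsto h$, $(z,xy)\mapsto (gh)^{-1}$, with $g,h$ distinct transpositions of $S_3$. The triangle then gets gain $hg(gh)^{-1}\ne 1$, so the composite is not a gaining of that biased graph. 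Your claim that three pairwise distinct non-identity elements with trivial product exist in every group of order at least three is also false for $\ZZ_3$, although distinctness is not needed.

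\textbf{The repair.} If $\Gamma$ has an element $g$ of order at least $3$, take $a_x=a_y=g$ and $a_{xy}=g^{-2}$. Otherwise $\Gamma$ is an elementary abelian $2$\dash group of order at least $4$; take distinct non-identity $a_x,a_y$ and set $a_{xy}=a_xa_y$. The paper's map $\bU^{\ab}_2\to\Gamma$ tacitly needs the same commutation, since $\beta\alpha^{-1}=\alpha^{-1}\beta$ in $\bU^{\ab}_2$.

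\textbf{The fallback via \cref{thm:main1} cannot work.} The biased graph $(3K_2,\emptyset)$ is gainable over $\ZZ_3$ and $\ZZ_2^2$ by \cref{nK2-exminor}, but not over $\ZZ_2$ by \cref{thm:Zas81}. That argument ends in $\bU_1$\dash gainability, and $\bU_1$ maps to $\ZZ_2$, so no version of it can succeed here. The intermediate partial group has to carry two independent non-identity generators, which is exactly the role of $\bU^{\ab}_2$.
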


\begin{theorem}
    \label{thm:main4}
    A biased graph $\Omega$ is gainable over all groups with an element of order at least three if and only if $\Omega$ is $\ZZ_3$\dash gainable and $\ZZ_4$\dash gainable.
\end{theorem}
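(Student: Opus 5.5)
The forward direction is immediate, since $\ZZ_3$ and $\ZZ_4$ each have an element of order at least three. For the converse, I would first reduce to cyclic groups. If $\Gamma$ has an element $g$ of order at least three, then $\Gamma$ contains a cyclic subgroup $\langle g\rangle$, which is either infinite or isomorphic to $\ZZ_n$ with $n\ge 3$. A gain function into a subgroup is also a gain function into $\Gamma$ with the same balanced cycles. So it suffices to show that $\Omega$ is gainable over $\ZZ_n$ for every $n\ge 3$ and over $\mathbb{Z}$.

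Next I would use divisibility. Whenever $m\mid n$, $\ZZ_m$ embeds in $\ZZ_n$. Every integer $n\ge 3$ is divisible by $3$, by $4$, or by a prime $p\ge 5$. The first two cases are covered by the hypotheses. It therefore remains to show that $\Omega$ is gainable over $\ZZ_p$ for each prime $p\ge 5$ and over $\mathbb{Z}$. For these groups, gainability over $\mathbb{Z}$ implies gainability over $\ZZ_N$ for every sufficiently large $N$, provided we keep track of the finitely many cycles involved. Conversely, a single integral gain assignment whose balanced-cycle sums vanish exactly, and whose unbalanced-cycle sums are nonzero and bounded in absolute value by $2$, maps to a valid $\ZZ_m$-gain for every $m\ge 3$.

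The main step is therefore to produce such an integral gain function. I would do this with the partial-group machinery used for \cref{thm:main1}. Define a partial group $P$ whose underlying elements are $\{0,\pm1,\pm2\}$, or some similar bounded set of integers, with the partial operation "add when the result stays in the allowed set". Show that a biased graph that is both $\ZZ_3$-gainable and $\ZZ_4$-gainable is $P$-gainable. Then observe that $P$ admits homomorphisms to $\ZZ_m$ for all $m\ge 5$ and to $\mathbb{Z}$ that send non-identity elements to non-identity elements, so $P$-gains push forward to genuine gains with the same balanced cycles.

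The step I expect to be hardest is the lifting: combining a $\ZZ_3$-gain and a $\ZZ_4$-gain into a single $P$-gain. My first attempt would be to imitate the Chinese-remainder style lifting used for partial fields. Normalise both gain functions to be the identity on a common spanning tree $T$. This is possible by switching, and it preserves the balanced cycles. Then assign each non-tree edge the unique element of the bounded set compatible with both its $\ZZ_3$-value and its $\ZZ_4$-value, for example via the $\ZZ_{12}$ residue represented in $\{-5,\dots,6\}$. Finally, verify cycle by cycle, using the theta property and the fact that balance is determined by fundamental cycles, that the resulting sums vanish exactly on balanced cycles. If a direct lift fails, I would fall back on the excluded-minor description of $\ZZ_3$-gainability (\cref{thm:Z3-excluded-minors}) together with a decomposition of $\Omega$ into $2$-connected pieces. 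On each piece I would analyse the possible parallel classes, which have at most three mutually unbalanced edges since $4K_2$ is excluded, and show directly that the gains can be chosen in $\{0,\pm1\}$.
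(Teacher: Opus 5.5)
Your reductions are sound, and your target is exactly the paper's. An integral gaining whose cycle sums lie in $\{0,\pm1,\pm2\}$ and vanish precisely on balanced cycles is a gaining over $\bS_2=(\langle\alpha\rangle,\{1,\alpha^{\pm1},\alpha^{\pm2}\})$. The map $\alpha\mapsto g$ pushes it to any group with an element $g$ of order at least three, so the detour through primes $p\ge5$ is unnecessary. The gap is the lifting step, which carries all the content, and the recipe you give for it fails.

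An integer congruent to the $\ZZ_3$-value mod $3$ and to the $\ZZ_4$-value mod $4$ is fixed modulo $12$. The mixed pairs (residue $2$ mod $3$ with $1$ mod $4$, or $1$ mod $3$ with $3$ mod $4$) give residues $\pm5$, and these have no representative in $\{0,\pm1,\pm2\}$. Worse, every integer $\equiv\pm5\pmod{12}$ is coprime to $6$ and has absolute value at least $5$, so it has a prime factor $p\ge5$. Already in $(2K_2,\emptyset)$, give the non-tree edge $\ZZ_3$-value $2$ and $\ZZ_4$-value $1$. Then any such lift (e.g.\ $5$ from $\{-5,\dots,6\}$) makes the unbalanced $2$-edge cycle balanced over some $\ZZ_p$. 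The lift also fails to respect sums: $5+5=10$ and $2+5=7$ as integers, while residues $10$ and $7$ are represented by $-2$ and $-5$. So integer cycle sums are uncontrolled, and ``verify cycle by cycle'' has no mechanism behind it. Your fallback is not a proof either. If ``gains in $\{0,\pm1\}$'' means cycle gains, its target is false: reducing mod $2$ would make $(3K_2,\emptyset)$ $\ZZ_2$-gainable, although it is $\ZZ_3$- and $\ZZ_4$-gainable.

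The missing idea is that the lift must forget the sign of the $\ZZ_4$-value. Send a pair $(a,b)$ of non-identity values to the unique element of $\{\pm1,\pm2\}$ congruent to $a$ mod $3$ and to $b$ mod $2$. The $\ZZ_4$-value thus only decides between absolute value $1$ and $2$. This is the paper's partial-group homomorphism from the partial product $\ZZ_3\otimes\ZZ_4$ to $\bS_2$; residues $\pm5$ of $\ZZ_{12}$ go to $\mp1$. It respects every product that stays inside $\ZZ_3\otimes\ZZ_4$:
inverse pairs go to negatives; otherwise the integer sum avoids $0$ and $\pm3$ because the $\ZZ_3$-parts add to a non-identity element; and it avoids $\pm4$ because that would need both $\ZZ_4$-values to equal $2$, whose sum is the identity.

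\cref{lem:homomorphisms} then supplies the verification you leave vague. Normalise on a spanning tree $T$ and induct on $|C-T|$. A tree path splits $C$ into a theta whose other two cycles have gains multiplying to that of $C$. All three gains lie in the partial product by \cref{lem:products}, so the homomorphism property carries the correct value through each split.
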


Finding the list of excluded minors for either of the classes described in the last two theorems is an open problem.

\subsection{Biased graphs and matroids}

Each biased graph represents a family of matroids, known as \emph{quasi-graphic matroids} (see \cites{MR4037634, MR3742182}).
The best-known members of this family are the \emph{frame} and \emph{lift} matroids of a biased graph.
Let $G$ be a graph and let $(\cB, \mathcal{L}, \mathcal{F})$ be a partition (into possibly empty sets) of the collection of cycles of $G$  such that $\Omega=(G,\cB)$ is a biased graph and whenever $C_{L}$ and $C_{F}$ are cycles in $\mathcal{L}$ and $\mathcal{F}$ respectively, there is at least one vertex that is common to $C_{L}$ and $C_{F}$.
We call the tuple $(G, \cB, \mathcal{L}, \mathcal{F})$ a \emph{biased framework}.

The quasi-graphic matroid represented by $(G, \cB, \mathcal{L}, \mathcal{F})$ has $E(G)$ as its ground set and its dependent sets are those subsets of $E(G)$ that contain either a cycle in $\cB$, two cycles in $\mathcal{L}$, or are connected and contain two cycles. 
In the case that $\mathcal{L}$ is empty, this specialises to the \emph{frame matroid} of the biased graph $\Omega=(G,\cB)$, and we denote this matroid by $F(\Omega)$. When $\mathcal{F}$ is empty we obtain the \emph{lift matroid} $L(\Omega)$, which is an elementary lift of the cycle matroid of $G$.

A biased framework $(G, \cB, \mathcal{L}, \mathcal{F})$ is \emph{$\Gamma$\dash gainable} if $(G,\cB)$ is a $\Gamma$\dash gainable biased graph. 
The following is a matroidal analogue of \cref{biased-Rota}.

\begin{conjecture}
\label{frame-lift-Rota}
Let $\Gamma$ be a finite group and let $\mathcal{M}$ be a minor-closed class of matroids, each of which is represented by a $\Gamma$\dash gainable biased framework.
There are only finitely many matroids that are excluded minors for $\mathcal{M}$.
\end{conjecture}

Equivalently, we conjecture that the class of quasi-graphic matroids that are $\Gamma$-gainable has finitely many excluded minors and does not contain an infinite pairwise-incomparable collection.
If we restrict \cref{frame-lift-Rota} to the case when $\mathcal{M}$ contains only frame matroids, then we obtain Conjectures~1.3 and 1.4 in \cite{MR4395073}.
The finiteness of $\Gamma$ is necessary in \cref{frame-lift-Rota}, because if $\Gamma$ is infinite, the class of frame matroids of $\Gamma$\dash gainable biased graphs has infinitely many excluded minors \cite{Funk15}*{Theorem~2.19}.

\begin{remark}
Chen and Geelen showed that there are infinitely many excluded minors for both lift and frame matroids \cite{CHEN201846}.
These excluded minors are all quasi-graphic matroids.
However, every one of these excluded minors contains one of two $8$\dash element matroids as a minor (one for the frame excluded minors and one for the lift excluded minors).
Neither one of these matroids can be represented by a $\Gamma$\dash gainable biased framework, for any group~$\Gamma$.
Thus Chen and Geelen's result does not disprove \cref{frame-lift-Rota}.
\end{remark}

Let $\Omega$ be a biased graph and let $\mathbb{F}$ be a field.
If $\Omega$ is gainable over a subgroup of the multiplicative group $\mathbb{F}^{*}$, then $F(\Omega)$ is $\mathbb{F}$\dash representable \cite{MR2017726}*{Theorem~2.1}.
Similarly, if $\Omega$ is gainable over a subgroup of the additive group $\mathbb{F}^{+}$, then $L(\Omega)$ is $\mathbb{F}$\dash representable \cite{MR2017726}*{Theorem~4.1}.
Conversely, if $\Omega$ is a biased framework for a 3-connected quasi-graphic $\mathbb{F}$\dash representable matroid, then $\Omega$ is either $\mathbb{F}^{*}$\dash gainable or $\mathbb{F}^{+}$\dash gainable \cite{MR4392273}*{Theorem~5.5}.

This shows that if $\Omega$ is $\ZZ_{3}$\dash gainable, then $F(\Omega)$ is $\mathbb{F}_{4}$\dash representable and $L(\Omega)$ is $\mathbb{F}_{3}$\dash representable.
Given that the classes of $\mathbb{F}_{3}$\dash representable and $\mathbb{F}_{4}$\dash representable matroids have known excluded-minor characterisations, one might wonder whether it is possible to use these to obtain \cref{thm:Z3-excluded-minors}.
This seems not to be the case.
The underlying reason is that a frame or lift matroid may be represented by multiple different biased graphs, and it may be that only some of these biased graphs are gainable over a particular group.
To see this, let $\Omega_1$ be $(3K_{2},\emptyset)$ and let $\Omega_2$ be the biased graph with underlying graph $K_{3}$ where the unique cycle is balanced.
Then $F(\Omega_1)=F(\Omega_2)$ and $L(\Omega_1)=L(\Omega_2)$.
Furthermore, $\Omega_2$ is gainable over every group but $\Omega_1$ is not $\ZZ_{2}$\dash gainable.
So in the case that a frame matroid $F(\Omega)$ is an excluded minor for $\mathbb{F}$\dash representable matroids, we \emph{can} conclude that $\Omega$ is not $\mathbb{F}^{*}$\dash gainable.
However, when $\Omega'$ is a proper minor of $\Omega$, we do not know that $\Omega'$ is $\mathbb{F}^{*}$\dash gainable, despite the fact that $F(\Omega')$ is $\mathbb{F}$\dash representable.
Therefore $\Omega$ may not be an excluded minor for $\mathbb{F}^{*}$\dash gainability.
To make this concrete, $U_{2,5}$ is a frame matroid and an excluded minor for $\mathbb{F}_{3}$\dash representability, but there is no biased graph $\Omega$ that is an excluded minor for $\mathbb{F}_{3}^{*}$\dash gainability where $F(\Omega)=U_{2,5}$.

In the other direction, if $\Omega$ is an excluded minor for $\mathbb{F}^{*}$\dash gainability, it may be that the matroid $F(\Omega)$ is $\mathbb{F}$\dash representable.
We can see this by referring to \cref{thm:Zas81}: $(3K_{2},\emptyset)$ is an excluded minor for $\mathbb{F}_{3}^{*}$\dash gainable biased graphs, but the frame matroid of $(3K_{2},\emptyset)$ is the uniform matroid $U_{2,3}$, and this is $\mathbb{F}_{3}$\dash representable.

The fact that $\Gamma$\dash gainability is not an invariant amongst the biased graphs representing a given quasi-graphic matroid also means that a resolution of \cref{biased-Rota} is unlikely to help with proving \cref{frame-lift-Rota}, and vice versa.

Even if it were possible to use the excluded-minor characterisation of $\mathbb{F}$\dash representable matroids to obtain excluded-minor characterisations of biased graphs gainable over $\mathbb{F}^{*}$ or $\mathbb{F}^{+}$, this would be of no help for almost all groups, since they are not subgroups of multiplicative or additive groups of fields.
Our hope is that the techniques we develop here will also be of use in obtaining further excluded-minor characterisations of classes of biased graphs in the future, so for these reasons, our arguments do not contain any references to matroids.

\section{Preliminaries}

Groups are written multiplicatively; we denote the identity of the group~$\Gamma$ by $\identity{\Gamma}$.
For a positive integer $m$, we write $\ZZ_m$ to denote the cyclic group of order $m$ consisting of elements $\{1,\omega,\dots,\omega^{m-1}\}$ under multiplication, where $\omega^m=1$.
In particular, the trivial group is $\ZZ_1=\{1\}$.
We write $\ZZ_m^n$ to denote the direct product of $n$ copies of $\ZZ_m$; for example, $\ZZ^2_2 = \ZZ_2 \times \ZZ_2$ is the Klein four-group.

\subsection{Graphs.}
Graphs may contains loops and parallel edges.
A non-loop edge $e$ is \emph{pendant} if there is a vertex that is incident only with $e$ and no other edge.
A vertex $v$ is \emph{isolated} if there are no edges incident with $v$.
A \emph{walk} is an alternating sequence $u_{0},e_{0},u_{1},\ldots, u_{n-1}, e_{n-1}, u_{n}$ of vertices and edges such that the set of vertices incident with each edge $e_{i}$ is equal to $\{u_{i}, u_{i+1}\}$.
We also consider a single vertex to be a walk.
A \emph{path} is a walk in which the vertices are pairwise distinct.
Two paths from $u$ to $v$ are \emph{internally disjoint} if the only vertices they have in common are $u$ and $v$.
If $n>0$ and $u_{0}=u_{n}$ while $u_{0},u_{1},\ldots, u_{n-1}$ are pairwise disjoint, then we say the walk is \emph{simple} and \emph{closed}.
If $G$ is a graph and $X$ is a set of edges, then $G[X]$ is the subgraph of $G$ with $X$ as its edge-set, where the vertices of $G[X]$ are exactly the vertices of $G$ that are incident with at least one edge in $X$.
A \emph{cycle} is a subgraph $G[X]$ where $X$ is a non-empty set of edges such that $G[X]$ is connected and regular with degree $2$.
When the meaning is clear, we may write $X$ to describe both a set of edges and the subgraph $G[X]$.
In the same way, we may think of a spanning tree or a cycle as being either a subgraph or a set of edges.

If $X$ is a set of vertices of $G$, then $G[X]$ is the subgraph with $X$ as its vertex-set, where the edges of $G[X]$ are exactly the edges of $G$ that are incident only with vertices in $X$.
If $U$ is a set of vertices or a subgraph and $v$ is a vertex not in $U$, then a \emph{$v$\dash $U$ path} is a minimal path that has $v$ and a vertex in $U$ as end-vertices.

A \emph{separation} of a graph $G$ is a pair of edge-disjoint subgraphs $(X,Y)$ of $G$ whose union is $G$, where $|E(X)|,|E(Y)| \ge 1$. 
The \emph{order} of the separation is $|V(X) \cap V(Y)|$. We call a separation of order $k$ a \emph{$k$\dash separation}. 
If $V(X)-V(Y)$ and $V(Y)-V(X)$ are both non-empty, then the separation is \emph{proper}. 
A graph with no isolated vertices is \emph{$k$\dash connected} if it has at least $k+1$ vertices and no proper separation of order less than $k$.

Let $(X,Y)$ be a $1$\dash separation of the graph $G$, and let $v$ be the vertex in $V(X)\cap V(Y)$.
If $(X,Y)$ is proper, then $v$ is a \emph{cut-vertex} of $G$.
Note that $(X,Y)$ fails to be proper if and only if either $X$ or $Y$ consists of loop edges incident with $v$.
A graph $G$ is \emph{biconnected} if it is connected and has no $1$\dash separations (either proper or non-proper).
Therefore a biconnected graph with at least two edges is loopless. 
A \emph{block} of $G$ is a maximal biconnected subgraph.

The next result follows from~\cite{MR0210617}*{9.65}.

\begin{proposition}
\label{prop:delete-or-contract}
Let $G$ be a biconnected graph and let $x$ be an edge of $G$.
Then either $G\backslash x$ or $G/x$ is biconnected.
\end{proposition}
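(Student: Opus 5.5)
We must prove: if $G$ is biconnected and $x$ is an edge, then $G\backslash x$ or $G/x$ is biconnected. Under the paper's definitions, a biconnected graph is connected and has no $1$\dash separation, proper or not. So a biconnected graph with at least two edges is loopless and $2$\dash connected in the usual sense, with no isolated vertices. The plan is to dispose of small cases directly and then argue by contradiction.

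\textbf{Small cases.} If $|E(G)|=1$, then $G$ is either a single non-loop edge or a single loop.
\begin{itemize}
\item If $x$ is a non-loop edge, $G/x$ is a single vertex, which is connected and has no separations, since a separation needs two nonempty edge sets.
\item If $x$ is a loop, $G\backslash x$ is a single vertex, again biconnected.
\end{itemize}
So assume $|E(G)|\ge 2$. Then $G$ is loopless, so $x$ has distinct ends $u$ and $v$. If $x$ lies in a parallel class of size at least two, then $G\backslash x$ is still connected. A $1$\dash separation $(X,Y)$ of $G\backslash x$ extends to one of $G$ by adding $x$ to the side containing a parallel copy of $x$; that side contains both $u$ and $v$, so no new vertices are shared. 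This contradicts biconnectivity of $G$, so we may assume $x$ is not parallel to any other edge.

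\textbf{Main case.} Suppose both $G\backslash x$ and $G/x$ fail to be biconnected. Note that $G/x$ is loopless, since $x$ has no parallel mate, and it is connected.
\begin{itemize}
\item Because $G/x$ is loopless and has at least one edge, a failure of biconnectivity means it has a proper $1$\dash separation. Pulling this back to $G$ gives a proper $2$\dash separation $(A,B)$ of $G$ with $V(A)\cap V(B)=\{u,v\}$ and $x$ in, say, $A$. Properness in $G/x$ forces both $A\backslash x$ and $B$ to have vertices outside $\{u,v\}$.
\item Similarly, $G\backslash x$ is connected: $x$ lies on a cycle, since otherwise $\{u\}$ would give a $1$\dash separation of $G$. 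So $G\backslash x$ has a cut-vertex $w$, with sides $C$ and $D$ of $G\backslash x$. Here $u\in C-D$ and $v\in D-C$, since otherwise adding $x$ would give a $1$\dash separation of $G$. In particular $w\notin\{u,v\}$.
\end{itemize}

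\textbf{Combining the separations.} The next step is to intersect the two separations. The vertex $w$ lies in $A-B$ or in $B-A$.
\begin{itemize}
\item Suppose $w\in B-A$. Then $A\backslash x$ is connected to both $u$ and $v$ through components that avoid $w$. Take a vertex $a\in V(A)-\{u,v\}$; we need $a$ to reach both $u$ and $v$ within $A\backslash x$, possibly via $B$. I would argue this by $2$\dash connectivity of $G$: $a$ has two paths to $\{u,v\}$ meeting only at $a$, and these lie in $A$ and avoid $x$. That gives a $u$--$v$ path avoiding $w$ and $x$, contradicting that $w$ separates $u$ from $v$ in $G\backslash x$.
\item Suppose $w\in A-B$. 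Apply the same argument to $B$, which has a vertex $b\notin\{u,v\}$. Two paths from $b$ to $u$ and $v$ inside $B$ avoid $w$, giving the same contradiction.
\end{itemize}

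The main obstacle is the bookkeeping with the paper's nonstandard separation conventions, in particular translating non-proper separations and loops correctly. The core fan-lemma argument, two internally disjoint paths from a vertex to $\{u,v\}$ obtained from Menger, is routine. Alternatively, as the paper indicates, one can simply cite~\cite{MR0210617}*{9.65}, after checking that its notion of nonseparable graph agrees with biconnected as defined here.
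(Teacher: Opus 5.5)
Your proof is correct. It differs from the paper only in that the paper gives no argument of its own and simply cites Tutte's result~\cite{MR0210617}*{9.65}.

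You give a self-contained argument instead. You dispose of the one-edge and parallel-edge cases, and then play two separations against each other. The first is the proper $1$\dash separation of $G\backslash x$, whose cut-vertex $w$ separates $u$ from $v$. The second is the $2$\dash separation $(A,B)$ of $G$ at $\{u,v\}$ obtained by lifting a proper $1$\dash separation of $G/x$. Finally, a $2$\dash fan from a vertex on the side avoiding $w$ yields a $u$--$v$ path in $G\backslash x$ that misses $w$.

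The citation is shorter, but it leaves the reader to confirm that Tutte's nonseparable graphs coincide with the paper's biconnected graphs, including non-proper separations, loops, and one-edge graphs. Your proof handles those conventions explicitly, which is worth something given how nonstandard they are.

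A few steps deserve one more sentence:
\begin{enumerate}[label=\textup{(\arabic*)}]
\item When you lift the $1$\dash separation of $G/x$, note that its cut-vertex must be the vertex formed by identifying $u$ and $v$. Otherwise that vertex lies on only one side, and the separation lifts directly to a $1$\dash separation of $G$. Only after this do you get $V(A)\cap V(B)=\{u,v\}$.
\item In the bridge argument, the shared vertex of the resulting $1$\dash separation may be $v$ rather than $u$. This depends on which component of $G\backslash x$ contains an edge.
\item The phrase ``possibly via $B$'' is misleading. The fan paths from $a$ stay inside $A$, because every edge at a vertex of $V(A)-V(B)$ lies in $A$ and the paths meet $\{u,v\}$ only at their ends. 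That same fact is why they avoid $x$, and why they avoid $w\in V(B)-V(A)$.
\item The fan lemma needs $G$ to be $2$\dash connected in the usual sense. This holds because in the main case $G$ is loopless with at least four vertices.
\end{enumerate}
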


A graph $G$ is \emph{minimally $2$\dash connected} if it is $2$\dash connected but whenever $e$ is an edge of $G$ we have that $G \backslash e$ is not $2$\dash connected.
Note that a minimally $2$\dash connected graph cannot contain loops or parallel edges.
We will use the following:

\begin{lemma}[Dirac \cite{MR216975}*{(6),(5)}, see also Mader \cite{MR561307}]
\label{diraclemma}
    A minimally $2$\dash connected graph $G$ has at least $(|V(G)|+4)/3$ vertices of degree two.
\end{lemma}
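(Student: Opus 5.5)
The plan is to prove first the structural fact behind Dirac's bound: \emph{in a minimally $2$\dash connected graph $G$, every cycle contains a vertex of degree two.} The count then follows by a discharging/edge-counting argument.

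\textbf{The cycle fact.} Suppose a cycle $C$ has all vertices of degree at least three, and let $e=uv$ be an edge of $C$. Since $G\backslash e$ is not $2$\dash connected, it has a cut-vertex $w$. Because $G$ is $2$\dash connected, $w$ separates $u$ from $v$ in $G\backslash e$. But $C-e$ is a $u$\dash$v$ path, so $w$ lies on $C$. Hence $\{e\}$ together with $w$ "certifies" the essentiality of $e$.

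Among all such pairs (an edge $e$ of $C$ and a component $K$ of $(G\backslash e)-w$ not containing $C$'s other side), choose one with $K$ minimal. Some vertex of $C$ strictly inside $K$ has degree at least three, so it has an edge $f$ leaving $C$ inside $K\cup\{w\}$. Applying essentiality to a suitable edge of $C$ inside $K$ yields a strictly smaller such component, a contradiction. This is the classical argument of Dirac and Plummer, and I would reproduce it in this minimal-counterexample form.

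Let $D$ be the set of degree-two vertices and $T=V(G)-D$, with $n_2=|D|$ and $n_3=|T|$. If $n_3=0$, then $G$ is a cycle, and $n_2=n\ge (n+4)/3$ because $n\ge 3$. So assume $n_3\ge 1$. Then $n_3\ge 2$, since a single branch vertex would be a cut-vertex.

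\textbf{Structure.} By the cycle fact, $F=G[T]$ is a forest, say with $k\ge 1$ components. The vertices of $D$ lie on \emph{threads}: maximal paths whose interior vertices are in $D$ and whose ends are in $T$. A thread with both ends equal would make that end a cut-vertex, so each thread has two distinct ends. Let $t$ be the number of threads with non-empty interior. Then $n_2\ge t$, with strict inequality unless every thread has exactly one interior vertex.

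\textbf{Counting.} A vertex $v\in T$ has at least $3-d_F(v)$ thread-ends. Using $\sum_v d_F(v)=2(n_3-k)$ gives
\[
2t\ \ge\ \sum_{v\in T}(3-d_F(v))\ \ge\ 3n_3-2(n_3-k)\ =\ n_3+2k .
\]
Hence $2n_2\ge n_3+2k$. The target $n_2\ge (n_2+n_3+4)/3$ is equivalent to $2n_2\ge n_3+4$, so we are done whenever $k\ge 2$.

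\textbf{Main obstacle: the case $k=1$.} Here $F$ is a tree, and the bound above is short by $2$. I would try to gain the missing $2$ from minimality applied to the tree edges. For an edge $e=uv$ of $F$, the graph $G\backslash e$ has a cut-vertex $w$, and every thread must respect the resulting separation. The plan is to show this forces at least one vertex of $T$ to carry an extra thread-end beyond $3-d_F$, or forces some thread to have two interior vertices; either gives an extra $+2$ in the count above.

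The cleanest attempt is to take a leaf edge $e=uv$ of $F$, with $v$ a leaf of $F$. Then $G\backslash e$ has a cut-vertex $w$ separating $v$ from $u$. The threads at $v$ all go into the $v$\dash side, and that side must contain further vertices of $T$ outside the single tree $F$, or else be a union of threads. I expect this case analysis to be where the real work lies. If it resists, I would fall back on citing Dirac's refined count directly, as the paper's attribution suggests.
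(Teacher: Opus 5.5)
The paper gives no proof of this lemma; it cites Dirac and Mader. Judged on its own, your argument is sound up to and including the count. $G[T]$ is a forest by the cycle fact, every thread with non-empty interior has two distinct ends in $T$, and $2n_2\ge 2t\ge n_3+2k$, which settles $k\ge 2$.

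The gap is the case $k=1$. There the count gives only $2n_2\ge n_3+2$, that is, $n_2\ge (|V(G)|+2)/3$, which is strictly weaker than the lemma. For this case you offer only a plan (leaf edges of $F$, cut-vertices of $G\backslash e$, an unspecified case analysis), with a fallback to citing Dirac. So as proposed, the statement is not proved. Your sketch of the weak cycle fact is also thin, since it never says which edge to use, though it can be completed.

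The missing idea is a stronger cycle fact: \emph{every cycle of a minimally $2$-connected graph contains at least two vertices of degree two}. It settles $k=1$ at once.
Suppose $F$ is a tree and $Q$ is a thread with non-empty interior and ends $x\neq y$. Then $Q$ together with the $x$--$y$ path of $F$ is a cycle whose only degree-two vertices are the interior vertices of $Q$. So every such thread has at least two interior vertices. Hence $n_2\ge 2t\ge n_3+2$, and therefore $2n_2\ge 2n_3+4\ge n_3+4$.

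To prove the stronger fact, first note that no cycle $C$ has a chord. If $ab\notin E(C)$ joins two vertices of $C$, then $C$ supplies two internally disjoint $a$--$b$ paths in $G\backslash ab$, so $G\backslash ab$ is still $2$-connected.

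Now suppose $C$ has exactly one degree-two vertex $d$, and write $C-d=x_1x_2\cdots x_m$. Each $x_i$ has a neighbour off $C$, so it is an attachment of some $C$-bridge. By $2$-connectivity every bridge has at least two attachments, and none of them is $d$. Choose a bridge $H$ and attachments $x_i,x_j$ of $H$ with $i<j$ and $j-i$ minimum.
\begin{itemize}
\item If $j=i+1$, then $x_ix_{i+1}$ is a chord of the cycle formed by an $x_i$--$x_{i+1}$ path through $H$ together with $C\backslash x_ix_{i+1}$.
\item Otherwise $x_{i+1}$ is an attachment of a bridge $H'\neq H$. By minimality, every other attachment $x_l$ of $H'$ has $l<i$ or $l>j$. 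Go from $x_{i+1}$ through $H'$ to $x_l$, then along $C$ to $x_i$ avoiding $x_{i+1},\dots,x_j$ (passing through $d$ when $l>j$), then through $H$ to $x_j$, then back along $C$ to $x_{i+1}$. This cycle contains $x_i$ and $x_{i+1}$ but not the edge $x_ix_{i+1}$, which is again a chord.
\end{itemize}
The same argument, run cyclically around $C$, also proves your weak cycle fact more cleanly than the minimal-component sketch.
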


\begin{corollary}
    \label{helper}
    Let $G$ be a $2$\dash connected graph 
    such that every vertex has degree at least three.
    Then there exist distinct $e,f \in E(G)$ such that each of $G \backslash e$, $G \backslash f$, $G \backslash \{e,f\}$ is $2$\dash connected.
\end{corollary}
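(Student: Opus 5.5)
We will prove Corollary~\ref{helper} by passing to a minimally $2$\dash connected spanning subgraph and applying Lemma~\ref{diraclemma}. First note that $G$ is loopless: a loop at $v$ gives a non-proper $1$\dash separation, but $2$\dash connectivity only forbids proper ones. So we first remove any loops; deleting a loop never affects $2$\dash connectivity. If $G$ has a loop $e$ and another loop or edge we can handle things directly. If $G$ has two loops, take them as $e$ and $f$. If it has exactly one loop $e$, we still need a second edge $f$ with $G\backslash f$ $2$\dash connected, which the argument below supplies in $G\backslash e$. For simplicity, assume from now on that $G$ is loopless.

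Choose a spanning subgraph $H$ of $G$ that is minimally $2$\dash connected. We obtain it by repeatedly deleting edges while $2$\dash connectivity is preserved. Since $G$ has at least three vertices and deleting edges does not change the vertex set, $H$ is spanning. By Lemma~\ref{diraclemma}, $H$ has at least $(|V(G)|+4)/3 \geq 7/3$ vertices of degree two, so at least three such vertices. Every vertex has degree at least three in $G$. Hence each degree-two vertex of $H$ is incident with at least one edge of $E(G)-E(H)$.

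Now we choose $e$ and $f$. Take two distinct edges $e,f\in E(G)-E(H)$. Every subgraph of $G$ containing the spanning $2$\dash connected subgraph $H$ is $2$\dash connected: adding edges to a $2$\dash connected graph on the same vertex set keeps it $2$\dash connected, and it has no isolated vertices. Therefore $G\backslash e$, $G\backslash f$ and $G\backslash\{e,f\}$ all contain $H$, and are all $2$\dash connected. It remains to check that $|E(G)-E(H)|\geq 2$. There are at least three degree-two vertices of $H$, each incident with an edge outside $H$. A single edge has at most two ends, so one edge cannot cover all three vertices. Hence at least two distinct edges lie outside $H$.

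The only delicate points are the bookkeeping ones. The first is the loop issue noted above. The second is confirming that "minimally $2$\dash connected" as defined matches Dirac's hypothesis. In particular, $H$ has no parallel edges, and $|V(H)|\geq 3$ so the bound applies. The rest is immediate.
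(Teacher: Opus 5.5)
Your proof is correct, but it is organised differently from the paper's. The paper applies \cref{diraclemma} twice in sequence. First, since $G$ has no degree-two vertices, it is not minimally $2$\dash connected, which yields $e$ with $G\backslash e$ $2$\dash connected. Second, $G\backslash e$ has at most two degree-two vertices, so if it were minimally $2$\dash connected the bound would force $|V(G)|\le 2$. This yields $f$ with $G\backslash\{e,f\}$ (and hence $G\backslash f$) $2$\dash connected.

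You apply the lemma only once, to a minimally $2$\dash connected spanning subgraph $H$, and then count. The at least three degree-two vertices of $H$ must be covered by edges of $E(G)-E(H)$, so there are at least two such edges. Both arguments rest on the same two ingredients: Dirac's bound, and the fact that adding edges on a fixed vertex set preserves $2$\dash connectivity. Your version gives slightly more. Any subset of $E(G)-E(H)$ can be deleted at once, and the same count shows there are at least $(|V(G)|+4)/6$ such edges. The paper's version avoids constructing $H$.

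Your preliminary case analysis on loops is unnecessary. Also, as written, the one-loop case invokes the main argument in $G\backslash e$, where the minimum-degree hypothesis may fail at the loop's vertex. It still goes through there, since at least two of the three degree-two vertices of $H$ are unaffected. But you can simply run the main argument directly on $G$. $H$ is automatically loopless and simple, so \cref{diraclemma} applies. The covering count only needs that each edge, loop or not, has at most two end-vertices.
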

\begin{proof}
The graph $G$ has at least $3$ vertices, by the definition of $2$\dash connectivity, and no vertices of degree $2$.  Thus, by \cref{diraclemma}, it is not minimally $2$\dash connected. So there exists $e \in E(G)$ such that $G \backslash e$ is $2$\dash connected.
Then $G \backslash e$ has at most two vertices of degree 2, so \cref{diraclemma} implies that if $G \backslash e$ is minimally $2$\dash connected, then $|V(G \backslash e)| \le 2$.
This implies $|V(G)| \le 2$, a contradiction.
Thus $G \backslash e$ is not minimally $2$\dash connected, implying there is an edge $f$ such that $G \backslash \{e,f\}$ is $2$\dash connected.
It follows that $G \backslash f$ is also $2$\dash connected.
\end{proof}

\subsection{Biased graphs.}

Let $G$ be a graph.
A \emph{theta subgraph} is a subgraph consisting of two distinct vertices $u$ and $v$ and three internally disjoint paths joining $u$ and $v$.
Let $\cB$ be a set of cycles.
If no theta subgraph of $G$ contains exactly two cycles in $\cB$, then $\cB$ satisfies the \emph{theta property}.
In this case, $(G,\cB)$ is a \emph{biased graph}.
The cycles in $\cB$ are said to be \emph{balanced}.
If $X$ is a set of edges or a subgraph, then $X$ is \emph{balanced} if every cycle of $X$ is in $\cB$.
Otherwise, $X$ is \emph{unbalanced}.
We say that the \emph{bias} of a cycle is its status as balanced or unbalanced.
So saying that two cycles have the same bias means that both belong to $\Bb$ or that neither one does.
Let $\Omega=(G,\cB)$ be a biased graph.
If $A$ is a set of edges, then $\Omega[A]$ is the biased graph on the underlying graph $G[A]$, where a cycle is balanced in $\Omega[A]$ if and only if it is balanced in $\Omega$.

We frequently blur the distinction between a biased graph and its underlying graph, so we may, for example, say that a biased graph is connected.

\subsection{Balanced reroutings.}

Let $(G,\Bb)$ be a biased graph, let $P$ be a path in $G$, and let $Q$ be a path internally disjoint from $P$ linking a pair of distinct vertices $x,y$ of $P$. 
We say the path $P'$ obtained from $P$ by replacing its subpath linking $x$ and $y$ (which we denote by $xPy$) with $Q$ is obtained by \emph{rerouting} $P$ \emph{along} $Q$. 
If the cycle consisting of $Q$ and $xPy$ is in $\Bb$, then we refer to this as \emph{rerouting along a balanced cycle} or a \emph{balanced rerouting}.

Let $u,v$ be vertices of $G$. 
Observe that given any pair of $u$\dash $v$ paths in $G$, one may be transformed to the other via some sequence of reroutings. 
Let $C$ be a cycle and let $P$ be a path contained in $C$, having distinct end-vertices $x,y$.
Let $Q$ be an $x$\dash $y$ path internally disjoint from $C$, and assume the cycle $P \cup Q$ is balanced. 
Then the balanced rerouting of $P$ along $Q$ yields a new cycle $C'$. 
Since $C \cup Q$ is a theta and the cycle $P \cup Q$ is balanced, the theta property says that $C$ and $C'$ are either both balanced or both unbalanced. 
We say that $C'$ is obtained from $C$ by \emph{balanced rerouting}.
Assume that $P$, $Q_{1}$, and $Q_{2}$ are paths joining the distinct vertices $x$ and $y$, where $P$ is internally disjoint from both $Q_{1}$ and $Q_{2}$.
Thus $P\cup Q_{1}$ and $P\cup Q_{2}$ are both cycles.
If there is some balanced subgraph that contains the union of $Q_{1}$ and $Q_{2}$, then balanced rerouting shows that the union $P \cup Q_{1}$ has the same bias as $P\cup Q_{2}$.

\subsection{Gainings.}

Let $G$ be a graph.
An \emph{arc} of $G$ is a triple of the form $(e,u,v)$, where $e$ is an edge, $u$ and $v$ are vertices incident with $e$, and where $u\ne v$ if $e$ is a non-loop edge.
If $\arc{a}=(e,u,v)$ is an arc, then $\arc{a}^{-1}$ is the arc $(e,v,u)$.
If $e$ is an edge, then an \emph{arc of $e$} is an arc of the form $(e,u,v)$, where $u$ and $v$ are vertices incident with $e$.
Let $A(G)$ denote the set of arcs of $G$.
Let $\Gamma$ be a group.
A \emph{$\Gamma$\dash gaining} of $G$ is a function $\gamma \colon A(G)\to \Gamma$ satisfying $\gamma(\arc{a}^{-1})=\gamma(\arc{a})^{-1}$ whenever $\arc{a}$ is an arc of a non-loop edge.
 
When $e$ is an edge of $G$ with distinct endpoints $u,v$ and $\gamma(e,u,v) = \gamma(e,v,u) = h \in \Gamma$, then we say for short that $\gamma$ assigns $h$ to the edge $e$.
This occurs when $h$ is an element of order two or when $h$ is the identity element of $\Gamma$. 
Thus in particular we may speak of edges being assigned the identity element of $\Gamma$. 

Assume that $W=u_{0},e_{0},u_{1},e_{1},\ldots, e_{n-1},u_{n}$ is a walk in $G$.
The \emph{gain} of $W$ (relative to $\gamma$) is
\[
\gamma(W)=\prod_{\arc{a}\in W}\gamma(a)=\gamma(\arc{a}_{0})\gamma(\arc{a}_{1})\cdots\gamma(\arc{a}_{n-1}),
\]
where $a_i = (e_i, u_i, u_{i+1})$ and we take the product in the order given by the walk.   
Let $W$, $W'$ be two simple closed walks contained in a cycle $C$. Then either $\gamma(W)$ and $\gamma(W')$ are conjugate or $\gamma(W)$ and $\gamma(W')^{-1}$ are conjugate. Thus $\gamma(W)=\identity{\Gamma}$ if and only if $\gamma(W')=\identity{\Gamma}$. When $\gamma(W)=\identity{\Gamma}$ for some (and therefore all) simple closed walks in a cycle $C$, we say the gain of $C$ is the identity.

Let $\gamma$ be a $\Gamma$\dash gaining of the graph $G$.
Let $\cB_{\gamma}$ be the set of cycles with gain equal to $\identity{\Gamma}$.
It is straightforward to check that $\cB_{\gamma}$ satisfies the theta property.
Let $\Omega=(G,\cB)$ be a biased graph.
We say that $\Omega$ is \emph{$\Gamma$\dash gainable} if there is a $\Gamma$\dash gaining $\gamma$ such that $\cB=\cB_{\gamma}$.
In this case we say that $\gamma$ is an \emph{$\Gamma$\dash gaining} of $\Omega$.
From~\cite{MR1007712}*{Corollary~5.7} we see that if $\Gamma$ is a group, then the class of $\Gamma$\dash gainable biased graphs is closed under minors.
(We extend this result in \cref{prop:gaining-minor}.)

Let $\gamma$ be a $\Gamma$\dash gaining of the biased graph $\Omega=(G,\cB)$ and let $\eta\colon V(G) \to \Gamma$ be a function.
The gain function $\gamma^\eta$ takes any arc $\arc{a}=(e,u,v)$ to $\eta(u)^{-1}\gamma(\arc{a})\eta(v)$.
We say $\gamma^{\eta}$ is obtained from $\gamma$ by \emph{switching}.
If $v$ is a vertex and $\eta$ sends every vertex except for $v$ to $\identity{\Gamma}$, then we say that $\gamma^{\eta}$ is obtained from $\gamma$ by \emph{switching at $v$}.
It is clear that $\cB_{\gamma^{\eta}} = \cB_{\gamma}$.
If $T$ is a forest of $G$, there is a function $\eta$ such that $\gamma^{\eta}$ takes any edge in $T$ to $\identity{\Gamma}$~\cite{MR1007712}*{Lemma~5.3}.

\begin{proposition}
\label{prop:agree-on-tree}
Let $\Gamma$ be a group and let $\Omega$ be a connected biased graph.
Let $\gamma$ and $\gamma'$ be $\Gamma$\dash gainings of $\Omega$.
Let $T$ be a spanning tree of $\Omega$ and assume that $\gamma$ and $\gamma'$ both take every edge of $T$ to $\identity{\Gamma}$.
Then $\gamma$ and $\gamma'$ take the same edges to $\identity{\Gamma}$.
\end{proposition}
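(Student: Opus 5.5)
Fix an edge $e$ of $\Omega$ with $e\notin T$; edges of $T$ are sent to $\identity{\Gamma}$ by both gainings by hypothesis, so these are the only edges to check. The idea is that, once the tree is gained trivially, whether $e$ receives the identity can be read off from the bias of a single cycle, which is the same for both gainings because $\cB_\gamma=\cB=\cB_{\gamma'}$.

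Suppose first that $e$ is a non-loop edge with distinct ends $u$ and $v$. Since $T$ is spanning and connected, there is a unique $v$\dash $u$ path $P$ in $T$, and $C=P\cup\{e\}$ is a cycle (the fundamental cycle of $e$ with respect to $T$). The walk starting at $u$ that first traverses the arc $(e,u,v)$ and then returns along $P$ is a simple closed walk in $C$. Every arc of $P$ is sent to $\identity{\Gamma}$ by $\gamma$, so its gain under $\gamma$ equals $\gamma(e,u,v)$. Hence $\gamma(e,u,v)=\identity{\Gamma}$ if and only if $C$ has gain the identity under $\gamma$, that is, if and only if $C\in\cB_\gamma=\cB$. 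Note also that $\gamma(e,v,u)=\gamma(e,u,v)^{-1}$, so the condition does not depend on which arc of $e$ we use, and ``$\gamma$ takes $e$ to $\identity{\Gamma}$'' is well defined. The same argument applied to $\gamma'$ shows that $\gamma'(e,u,v)=\identity{\Gamma}$ if and only if $C\in\cB$. Therefore $\gamma$ takes $e$ to the identity exactly when $\gamma'$ does.

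Suppose instead that $e$ is a loop at a vertex $u$. Then $e$ by itself forms a cycle, and the closed walk $u,e,u$ is simple with gain $\gamma(e,u,u)$. So $\gamma(e,u,u)=\identity{\Gamma}$ if and only if $\{e\}\in\cB$, and the same equivalence holds for $\gamma'$. There is a minor point about the two arcs of a loop: the convention $\gamma(\arc{a}^{-1})=\gamma(\arc{a})^{-1}$ is only imposed for non-loop edges, so a loop has a single arc up to this symmetry. Either way, whether that arc's gain is trivial is determined by whether the cycle $\{e\}$ is balanced.

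The main obstacle is bookkeeping rather than any real difficulty. One must check that the simple closed walk around the fundamental cycle has gain exactly $\gamma(e,u,v)$, and that balance of $C$ is independent of which simple closed walk is chosen. The preliminaries already establish this: gains of different simple closed walks in the same cycle are conjugate or inverse-conjugate.
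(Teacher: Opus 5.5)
Your proof is correct and is essentially the paper's argument: both use the cycle in $T\cup e$, whose gain under either gaining is just the gain on $e$, so the bias of that one cycle decides the question for $\gamma$ and $\gamma'$ alike. The only differences are presentational: the paper argues by contradiction and covers loops implicitly (the cycle in $T\cup e$ is then $\{e\}$), whereas you argue directly and treat loops as a separate case.
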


\begin{proof}
If this fails then we can assume without loss of generality that there is an edge $e$ and an arc \arc{a} of $e$ such that $\gamma(\arc{a}) = \identity{\Gamma}$ and $\gamma'(\arc{a})\ne\identity{\Gamma}$.
It must be the case that $e$ is not in $T$.
There is a cycle $C$ contained in the union of $T$ and $e$.
The gain of $C$ is the identity under $\gamma$, so $C$ is balanced, but its gain is not the identity under $\gamma'$, so it is not balanced.
This contradiction completes the proof.
\end{proof}

\subsection{Minors.}

Let $\Omega=(G,\cB)$ be a biased graph.
Let $A$ and $B$ be disjoint sets of edges such that the biased subgraph $\Omega[A]$ induced by $A$ is balanced. 
Let $\mathcal{P}$ be the set of connected components of $G[A]$.
We define $G/A\backslash B$ to be the graph with $\mathcal{P}$ as its vertex-set and edge-set $E(G)-(A\cup B)$, where an edge $e\in E(G)-(A\cup B)$ is incident with $P\in \mathcal{P}$ if and only if there is a vertex $v\in P$ such that $e$ is incident with $v$ in $G$.
Now we construct the set of cycles $\cB/A\backslash B$.
Let $C$ be a cycle of $G/A\backslash B$.
There is at least one cycle $C'$ in $G$ such that every edge of $C$ is in $C'$, and every edge of $C'-C$ is in $A$.
By balanced rerouting, all such cycles have the same bias, so the following definition is well-defined.
We declare $C$ to belong to $\cB/A\backslash B$ if and only if $C'$ is in $\cB$.
Now $\Omega/A\backslash B$ is the biased graph $(G/A\backslash B, \cB/A\backslash B)$.
We say that $\Omega/A\backslash B$ is obtained from $\Omega$ by \emph{contracting} $A$ and \emph{deleting} $B$.
A \emph{minor} of $\Omega$ is any biased graph obtained from a biased graph of the form $\Omega/A\backslash B$ by possibly deleting isolated vertices.
Note that in this definition of minors, we cannot contract an unbalanced loop, so the only way to get rid of a unbalanced loop is by deleting it. This is in accordance to the fact that unbalanced loops would correspond to a non-identity gaining on a loop or a noncontractible loop on a surface.

Let $\mathcal{M}$ be a minor-closed class of graphs, matroids, or biased graphs.
A minor-minimal graph, matroid, or biased graph that is not in $\mathcal{M}$ is an \emph{excluded minor} for the class.

\begin{proposition}\label{prop:unbalanced-contract}
Let $\Omega$ be a biased graph with at least one unbalanced cycle, and let $e$ be a non-loop edge of $\Omega$.
Then $\Omega/e$ contains an unbalanced cycle.
\end{proposition}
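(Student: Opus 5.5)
Fix an unbalanced cycle $C$ of $\Omega$ and a non-loop edge $e$ with distinct end-vertices $u,v$. The plan is to exhibit a cycle $D$ of $G$ whose image in $G/e$ is a cycle and which is unbalanced in $\Omega$. By the definition of contraction, the image of $D$ then has the same bias as $D$, so it is unbalanced in $\Omega/e$. Contracting $e$ is allowed because $\{e\}$ is balanced: it is a non-loop edge, so it contains no cycle.

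The argument splits according to how $C$ meets $e$.

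\textbf{Case 1: $e\in E(C)$.} If $C$ has length at least three, then $C/e$ is a cycle of $G/e$. Its preimage cycle in $G$ is $C$ itself, since $C$ consists of the edges of $C/e$ together with $e\in A$. So $C/e$ is unbalanced. If $C$ has length two, let $f$ be the other edge of $C$. Then $f$ is parallel to $e$ and becomes a loop in $G/e$, and the preimage of that loop is the cycle $\{e,f\}=C$. So the loop $f$ is unbalanced in $\Omega/e$.

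\textbf{Case 2: $e\notin E(C)$, and at most one of $u,v$ lies on $C$.} Then $C$ is still a cycle of $G/e$: when $e$ is contracted, $C$ has no repeated vertices, because the merged vertex corresponds to at most one vertex of $C$. Its preimage is $C$ itself, so it is unbalanced.

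\textbf{Case 3: $e\notin E(C)$ and both $u,v\in V(C)$.} Now $e$ is a chord of $C$. Let $P_1,P_2$ be the two $u$\dash $v$ paths forming $C$. Then $C\cup e$ is a theta subgraph with cycles $C$, $C_1=P_1\cup e$ and $C_2=P_2\cup e$. Since $C$ is unbalanced, the theta property forbids $C_1$ and $C_2$ from both being balanced, so we may assume $C_1$ is unbalanced. Since $e\in E(C_1)$, Case 1 applied to $C_1$ gives an unbalanced cycle (possibly a loop) in $\Omega/e$.

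The only step needing care is Case 3, and it is handled by the theta property. Beyond that, the work is just checking that contraction preserves the cycle structure in each case, including the degenerate length-two situation.
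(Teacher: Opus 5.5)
Your proof is correct and follows essentially the same route as the paper. Both split on whether $e$ lies in $C$, is not a chord of $C$, or is a chord, and in the chord case apply the theta property to $C\cup e$ to get an unbalanced cycle through $e$, whose contraction is then unbalanced. Your explicit handling of the length-two case, where the contracted cycle becomes an unbalanced loop, is a detail the paper leaves implicit.
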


\begin{proof}
Let $C$ be an unbalanced cycle of $\Omega$.
If $e$ is an edge of $C$, then $C/e$ is an unbalanced cycle of $\Omega/e$, since $e$ is not a loop.
If $e$ is not a chord of $C$, then $C$ is an unbalanced cycle of $\Omega/e$. 
The remaining possibility is that $e$ is a chord of $C$. 
Then the union of $C$ and $e$ is a theta subgraph.
One of the cycles $D$ in this subgraph contains $e$ and is unbalanced,  by the theta property. 
The cycle $D/e$ is an unbalanced cycle in $\Omega/e$. 
\end{proof}

\subsection{The excluded minors.}

In this section we prove basic facts about excluded minors for classes of $\Gamma$\dash gainable biased graphs and we certify that the biased graphs in \cref{thm:Z3-excluded-minors} really are excluded minors for the class of $\ZZ_{3}$\dash gainable biased graphs.
The next lemma shows that an excluded minor must be biconnected (implying that it is loopless).

\begin{lemma}
    \label{exminors2conn}
    Let $\Gamma$ be a non-trivial group and let $\Omega$ be an excluded minor for the class of biased graphs that are $\Gamma$\dash gainable.  
    Then $\Omega$ has at least two vertices, has minimum degree at least three, and is biconnected.
    Furthermore $\Omega$ has no $2$\dash separation where one side is balanced and contains at least two edges (so in particular $\Omega$ has no balanced $2$\dash cycle). 
\end{lemma}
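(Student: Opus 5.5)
The approach is a sequence of gluing arguments. Each argument takes $\Gamma$\dash gainings of proper minors of $\Omega$, which exist by minimality, and combines them into a $\Gamma$\dash gaining of $\Omega$, a contradiction. Throughout, we use that switching preserves $\cB_\gamma$ and that a gaining may be switched to be the identity on any forest.

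First, $\Omega$ has at least two vertices. A one-vertex biased graph consists of loops. Assign $\identity{\Gamma}$ to each balanced loop and a fixed non-identity element $g$ to each unbalanced loop; $g$ exists because $\Gamma$ is non-trivial. This is a gaining, since the only cycles are the loops.

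\emph{Biconnectivity.} Suppose $(X,Y)$ is a $1$\dash separation with common vertex $v$, possibly non-proper, for instance a loop. Then $\Omega[E(X)]$ and $\Omega[E(Y)]$ are proper minors: they are obtained by deleting edges and isolated vertices. So each has a gaining, $\gamma_X$ and $\gamma_Y$. Every cycle of $\Omega$ lies entirely in $X$ or in $Y$, so the union $\gamma_X\cup\gamma_Y$ is a gaining of $\Omega$. If $\Omega$ is disconnected, the same argument applies to a split into components. Hence $\Omega$ is connected, has no $1$\dash separation, and is therefore loopless, since it has at least two edges; otherwise it is a single edge or a single loop, both of which are gainable.

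\emph{Minimum degree.} If $v$ has degree $1$, its edge is pendant and lies in no cycle. Any gaining of $\Omega\backslash e$ extends by assigning $\identity{\Gamma}$ to $e$. Degree $0$ is impossible because $\Omega$ is connected with an edge. Now suppose $v$ has degree $2$, with edges $e=uv$ and $f=vw$ where $u\neq v\neq w$. Contracting $e$ is allowed since $e$ is a non-loop edge. Take a gaining $\gamma$ of $\Omega/e$ and extend it by assigning $\identity{\Gamma}$ to $e$, with $f$ keeping its label viewed from $u$, now from $v$. Every cycle of $\Omega$ through $v$ uses both $e$ and $f$ and corresponds exactly to a cycle of $\Omega/e$ through $f$, with the same gain and, by the definition of contraction, the same bias. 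Cycles avoiding $v$ are unchanged. If $u=w$, the pair $e,f$ forms a $2$\dash cycle. That case is handled the same way: the contracted edge $f$ becomes a loop at $u$ whose bias equals that of the $2$\dash cycle.

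\emph{Balanced $2$\dash separations.} Let $(X,Y)$ be a $2$\dash separation with $V(X)\cap V(Y)=\{a,b\}$, where $X$ is balanced and $|E(X)|\ge2$. Since $\Omega$ is biconnected, $X$ is connected and contains an $a$\dash $b$ path $P$; this uses the no-$1$\dash separation condition. Let $e$ be an edge of $P$. Form the minor $\Omega'$ by contracting $P-e$ and deleting $E(X)-E(P)$; this is legal because $P$ is balanced, being acyclic. Then $\Omega'$ is $Y$ together with a single edge $e$ joining $a$ and $b$. Since $|E(X)|\ge2$, either $E(X)-E(P)$ is non-empty or $P$ has at least two edges, so $\Omega'$ is a proper minor and has a gaining $\gamma'$.

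Define $\gamma$ on $\Omega$ as follows. Restrict $\gamma'$ to $Y$. Choose a spanning tree $T$ of $X$ containing $P$ and switch so that $T$ is identity. Set $\gamma$ to be identity on $E(X)$ except along $P$, where the whole gain $\gamma'(e,a,b)$ is placed on one arc.

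Checking this gaining is the main obstacle. A cycle inside $X$ has identity gain, because all $X$\dash edges off $P$ are identity and the cycle's gain telescopes through the tree. It is also balanced, since $X$ is balanced. A cycle using both sides consists of an $a$\dash $b$ path $Q$ in $X$ together with a path in $Y$. By balanced rerouting inside the balanced subgraph $X$, it has the same bias as the cycle formed with $P$ in place of $Q$, which in turn has the same bias as the corresponding cycle in $\Omega'$.

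For the gains to agree, every $a$\dash $b$ path in $X$ must have gain $\gamma'(e,a,b)$. This fails as stated when $\gamma'(e)\neq\identity{\Gamma}$, so the construction must be fixed: first switch $\gamma'$ at $b$ so that $\gamma'(e)=\identity{\Gamma}$. This requires $b$ not to be adjacent in a problematic way; it is fine because switching at $b$ only relabels arcs at $b$. After that, take $\gamma$ to be identity on all of $X$. Every $a$\dash $b$ path in $X$ then has identity gain, and every cycle crossing the separation has the same gain as the corresponding cycle through $e$ in $\Omega'$. This completes the argument. In particular a balanced $2$\dash cycle is excluded, by taking $X$ to be that $2$\dash cycle.
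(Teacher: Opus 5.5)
Your proof is correct and follows the paper's argument step for step. You glue gainings of the two sides of a separation of order at most one, and you extend a gaining of $\Omega/e$ at a degree-two vertex by giving $e$ the identity. For a balanced $2$-separation you contract an $a$-$b$ path of $X$ down to one edge $e$, switch so that $e$ receives $\identity{\Gamma}$, label every edge of $X$ with $\identity{\Gamma}$, and use balanced rerouting to compare biases. You can delete the discarded first construction (the spanning tree $T$ with the whole gain placed on one arc of $P$) and the caveat about switching at $b$, since switching never changes the set of balanced cycles.
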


\begin{proof}
If $\Omega$ has an isolated vertex, then deleting that isolated vertex produces a proper minor, which is therefore $\Gamma$\dash gainable.
This implies that $\Omega$ is $\Gamma$\dash gainable, a contradiction.
Therefore $\Omega$ has no isolated vertex.
If $\Omega$ has only a single vertex, then it is gainable over every group.
Therefore $\Omega$ has at least two vertices.
Suppose $\Omega$ has a separation $(X,Y)$ with $|V(X) \cap V(Y)| \leq 1$.
    Recall that this means that $X$ and $Y$ are edge-disjoint subgraphs, each containing at least one edge.
    By minimality each of $X$ and $Y$ are $\Gamma$\dash gainable. 
    Let $\gamma_X$ be a $\Gamma$\dash gaining of $X$ and let $\gamma_Y$ be a $\Gamma$\dash gaining of $Y$. 
    Since no cycle of $\Omega$ contains edges from both $X$ and $Y$, the $\Gamma$\dash gain function on $\Omega$ obtained as the union of $\gamma_X$ and $\gamma_Y$ is a $\Gamma$\dash gaining of $\Omega$, a contradiction. 
Therefore no such separation can exist, so $\Omega$ is biconnected.

    Suppose $\Omega$ has a vertex $v$ of degree less than three.
    It cannot have a degree-one vertex, or else it would have a $1$\dash separation.
Therefore we can let $v$ be a degree-two vertex.
Let $\gamma$ be a $\Gamma$\dash gaining of $\Omega/e$ where $e$ is one of the two edges incident to $v$. 
    But now the gain function obtained by extending $\gamma$ to $\Omega$ by labelling $e$ with $\identity{\Gamma}$ is a $\Gamma$\dash gaining of $\Omega$, a contradiction.

    Now $\Omega$ is biconnected with minimum degree at least three. 
    Finally suppose that $\Omega$ has a 2-separation $(X,Y)$ for which every cycle in $X$ is balanced and $X$ contains at least two edges.
    Let $u$ and $v$ be the two vertices in $V(X)\cap V(Y)$.
    Then $X$ contains a path from $u$ to $v$, or else $\Omega$ has a $1$\dash separation.
    Let $P$ be such a path and let $\Omega'$ be the biased graph obtained from $\Omega$ by contracting all but one edge (call it $e$) from $P$, and deleting every edge of $X$ that is not in $P$.
    A least one element has been deleted or contracted from $X$, so $\Omega'$ is gainable over $\Gamma$. 
    Let $\gamma'$ be a $\Gamma$\dash gaining of $\Omega'$; by switching if necessary we may assume $\gamma'$ assigns $\identity{\Gamma}$ to $e$. 
    Let $\gamma$ be the $\Gamma$\dash gain function on $\Omega$ whose restriction to $Y$ is equal to $\gamma'$ and maps each edge in $X$ to $\identity{\Gamma}$. 
    By balanced rerouting, every cycle $C$ meeting both $X$ and $Y$ has the same bias as the cycle $(C \cap Y) \cup P$.
    Therefore $C$ is balanced in $\Omega$ if and only if $(C\cap Y)\cup e$ is balanced in $\Omega'$.
    Every cycle contained in $X$ is balanced, so it follows that $\gamma$ is a $\Gamma$\dash gaining of $\Omega$, and we have a contradiction. 
\end{proof}

Let $e$ be a non-loop edge of the biased graph $\Omega$, and assume that every unbalanced cycle contains $e$.
In this case we say that $e$ is a \emph{balancing edge} of $\Omega$.
If there is at least one unbalanced cycle, $C$, then $C$ contains $e$.
Since any cycle not containing $e$ is balanced, it follows that any cycle containing $e$ can be obtained from $C$ by balanced rerouting.
Therefore any cycle containing $e$ is unbalanced.

\begin{proposition}\label{prop:balancing-edge}
Let $\Omega$ be a biased graph with a balancing edge.
If $\Gamma$ is a non-trivial group then $\Omega$ is $\Gamma$\dash gainable.
\end{proposition}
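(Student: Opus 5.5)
The plan is to write down an explicit gaining: put a non-identity element on the balancing edge and the identity everywhere else. Fix a non-trivial group $\Gamma$ and a non-identity element $g\in\Gamma$. Let $e$ be the balancing edge, with distinct endpoints $u,v$ (a balancing edge is a non-loop by definition). Define $\gamma$ on arcs by
\[
\gamma(e,u,v)=g,\qquad \gamma(e,v,u)=g^{-1},
\]
and $\gamma(\arc{a})=\identity{\Gamma}$ for every arc $\arc{a}$ of every other edge, including loops. This satisfies $\gamma(\arc{a}^{-1})=\gamma(\arc{a})^{-1}$ on non-loop edges, so it is a $\Gamma$\dash gaining of $G$.

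Next I will check that $\cB_\gamma=\cB$, splitting cycles according to whether they contain $e$.
\begin{itemize}
\item A cycle $C$ avoiding $e$ has gain equal to a product of identities, so it is in $\cB_\gamma$. By the definition of a balancing edge, every unbalanced cycle contains $e$, so $C$ is also in $\cB$.
\item A cycle $C$ through $e$ (necessarily of length at least two, since $e$ is not a loop) traverses $e$ exactly once along any simple closed walk. Its gain is therefore $g$ or $g^{-1}$, which is not the identity, so $C\notin\cB_\gamma$.
\end{itemize}

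For the second case I still need $C\notin\cB$. The remark just before the statement gives this whenever some unbalanced cycle exists: every cycle through $e$ is obtained from an unbalanced one by balanced rerouting, hence is unbalanced.

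The only point needing care is the degenerate case where $\Omega$ is balanced. There the cycles through $e$ are balanced, so the construction above fails. I will handle this case separately using the identity gaining, which takes every arc to $\identity{\Gamma}$ and makes every cycle balanced, matching $\cB$.

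So in both cases $\Omega$ is $\Gamma$\dash gainable. I expect no real obstacle; the main thing to get right is checking that the preceding rerouting remark genuinely applies, namely that any cycle through $e$ can be rerouted to a fixed unbalanced one using only balanced cycles avoiding $e$.
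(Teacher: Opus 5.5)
Your proposal is correct and follows essentially the same approach as the paper: handle the balanced case with the identity gaining; otherwise put a non-identity element on the balancing edge and the identity everywhere else. Then invoke the rerouting remark that every cycle through the balancing edge is unbalanced.
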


\begin{proof}
If $\Omega$ has no unbalanced cycle we can assign the identity of $\Gamma$ to every edge.
Therefore we assume that $C$ is an unbalanced cycle, which necessarily contains $e$.
As we explained, a cycle is unbalanced if and only if it contains $e$.
We define the gaining $\gamma$ so that $\gamma$ sends every edge other than $e$ to the identity, and so that $\gamma$ does not take $e$ to the identity.
Now $\gamma$ is a $\Gamma$\dash gaining of $\Omega$.
\end{proof}

Recall that if $G$ is a graph and $n$ is a positive integer, then $nG$ is the graph obtained from $G$ by replacing each non-loop edge by a parallel class of $n$ edges.

\begin{proposition}
\label{nK2-exminor}
Let $\Gamma$ be a non-trivial group and let $n$ be a positive integer.
If $n\leq |\Gamma|$ then $(nK_{2},\emptyset)$ is $\Gamma$\dash gainable, and if $n=|\Gamma|+1$, then $(nK_{2},\emptyset)$ is an excluded minor for $\Gamma$\dash gainability.
\end{proposition}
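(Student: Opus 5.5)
The plan is to treat the gainable case and the non-gainable case separately, and then check minimality of $((|\Gamma|+1)K_{2},\emptyset)$ over its proper minors.

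\textbf{Gainability for $n\le|\Gamma|$.} Let $u,v$ be the two vertices and $e_1,\dots,e_n$ the parallel edges. Choose distinct elements $g_1,\dots,g_n\in\Gamma$, which is possible since $n\le|\Gamma|$, and set $\gamma(e_i,u,v)=g_i$ and $\gamma(e_i,v,u)=g_i^{-1}$. The cycles are exactly the $2$\dash cycles $\{e_i,e_j\}$ with $i\ne j$. The closed walk $u,e_i,v,e_j,u$ has gain $g_ig_j^{-1}$, which is not $\identity{\Gamma}$ because $g_i\ne g_j$. So every cycle is unbalanced, and $\gamma$ is a $\Gamma$\dash gaining of $(nK_2,\emptyset)$.

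\textbf{Non-gainability for $n=|\Gamma|+1$.} Suppose $\gamma$ were a $\Gamma$\dash gaining. The $n$ values $\gamma(e_i,u,v)$ lie in $\Gamma$, so by pigeonhole two of them coincide, say for $i\ne j$. Then the walk $u,e_i,v,e_j,u$ has gain $\gamma(e_i,u,v)\gamma(e_j,u,v)^{-1}=\identity{\Gamma}$, so the cycle $\{e_i,e_j\}$ is balanced. This contradicts $\cB=\emptyset$.

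\textbf{Every proper minor is gainable.} This is the main bookkeeping step. Any proper minor arises by deleting some nonempty set of edges, by contracting a balanced set $A$, or by deleting isolated vertices. Since every $2$\dash cycle is unbalanced, a balanced set $A$ contains at most one edge. We then check three cases.
\begin{enumerate}[label=(\roman*)]
\item Deleting at least one edge leaves a minor of $(mK_2,\emptyset)$ with $m\le|\Gamma|$.
\item Contracting one edge $e_i$ yields a single vertex carrying $n-1$ loops. Each such loop corresponds to the unbalanced cycle $\{e_i,e_j\}$, so each loop is unbalanced. This is gainable by giving every loop a fixed non-identity element, which exists because $\Gamma$ is non-trivial.
\item Deleting an isolated vertex is only possible after a contraction or deletion has already occurred.
\end{enumerate}
Since gainability is minor-closed (\cite{MR1007712}*{Corollary~5.7}), it suffices to verify the single-deletion minor $(|\Gamma|K_2,\emptyset)$ and the single-contraction minor. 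These are gainable by the first part and by the loop argument above, respectively.
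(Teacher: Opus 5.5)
Your proposal is correct and follows essentially the same route as the paper: distinct gains on the parallel edges when $n\le|\Gamma|$, pigeonhole to force a balanced $2$\dash cycle when $n=|\Gamma|+1$, and a check that single-edge deletions and contractions are gainable. You are somewhat more explicit than the paper about why these two minors suffice and about the gaining of the contracted graph, which has $n-1$ unbalanced loops, each given a fixed non-identity element.
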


\begin{proof}
Let $1$ and $2$ be the two vertices in $nK_{2}$.
If $n\leq |\Gamma|$, then we can let $\gamma$ be a gaining such that whenever $e$ and $f$ are different edges, we have $\gamma(e,1,2)\ne \gamma(f,1,2)$.
This establishes the first statement.
Now let $n=|\Gamma|+1$.
Assume for a contradiction that $\gamma$ is a gaining of $(nK_{2},\emptyset)$.
By the pigeonhole principle, there exist distinct edges $e$ and $f$ such that $\gamma(e,1,2)=\gamma(f,1,2)$.
Now the cycle comprising $e$ and $f$ is balanced, which is a contradiction.
Deleting any edge from $(nK_{2},\emptyset)$ produces a $\Gamma$\dash gainable biased graph, by the first statement.
Contracting any edge produces a biased graph which is gainable over any non-trivial group.
\end{proof}

Recall that the biased graph $\pm G$ is obtained by replacing each non-loop edge of $G$ with a parallel class of two edges, one positive and one negative, where a cycle is balanced in $\pm G$ if and only if it contains an even number of negative edges.

\begin{proposition}
\label{pmK3-exminor}
Let $\Gamma$ be a non-trivial group.
If $\Gamma$ contains an element of order two, then $\pm K_{3}$ is $\Gamma$\dash gainable.
Otherwise $\pm K_{3}$ is an excluded minor for $\Gamma$\dash gainability.
\end{proposition}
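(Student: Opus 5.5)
The plan has three parts: a gaining when $\Gamma$ has an involution, a non-gainability argument when it does not, and a check that every single-edge minor is gainable.

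\emph{Gaining when $\Gamma$ has an involution.} Suppose $t\in\Gamma$ has order two. Assign $\identity{\Gamma}$ to every positive edge of $\pm K_3$ and $t$ to every negative edge; this is a valid gaining because $t=t^{-1}$. The group generated by $t$ is $\{\identity{\Gamma},t\}$, which is abelian. So the gain of a simple closed walk is $t^k$, where $k$ is the number of negative edges it uses, and this is the identity exactly when $k$ is even. This matches $\cB$, so $\pm K_3$ is $\Gamma$\dash gainable.

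\emph{Non-gainability when $\Gamma$ has no involution.} Label the vertices $1,2,3$. In the parallel pair joining $i$ and $j$, call the positive edge $p_{ij}$ and the negative edge $n_{ij}$. Suppose $\gamma$ is a $\Gamma$\dash gaining of $\pm K_3$. The positive triangle $p_{12}p_{23}p_{31}$ is balanced, and the tree formed by $p_{12}$ and $p_{23}$ spans the graph. Switching therefore makes $\gamma$ take $p_{12}$ and $p_{23}$ to $\identity{\Gamma}$, and then balance of the positive triangle forces $p_{31}$ to the identity as well.
\begin{itemize}[label={}]
\item The $2$\dash cycle $\{p_{ij},n_{ij}\}$ is unbalanced, so $a_{ij}:=\gamma(n_{ij},i,j)\neq\identity{\Gamma}$.
\item A triangle using exactly two negative edges is balanced. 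Taking $n_{12}$, $n_{23}$ and $p_{31}$ gives $a_{12}a_{23}=\identity{\Gamma}$.
\item In the same way, the other two such triangles give $a_{23}a_{31}=\identity{\Gamma}$ and $a_{31}a_{12}=\identity{\Gamma}$.
\end{itemize}
The first and third equations give $a_{23}=a_{12}^{-1}=a_{31}$. The second then gives $a_{23}^2=\identity{\Gamma}$ with $a_{23}\neq\identity{\Gamma}$. So $a_{23}$ has order two, which is a contradiction.

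\emph{Minimality.} $\Gamma$ is non-trivial and has no involution, so it has an element $g$ of order at least three. By symmetry, deleting a positive edge is equivalent to deleting a negative edge: resigning, i.e.\ swapping the roles of $p_{ij}$ and $n_{ij}$ at every edge incident with one vertex, preserves the parity of negative edges in every cycle. So it suffices to treat deleting $n_{31}$ and contracting $p_{31}$.
\begin{itemize}[label={}]
\item \emph{Contracting $p_{31}$.} The result has two vertices joined by four edges: $p_{12}$, $n_{12}$, and the images of $p_{23}$ and $n_{23}$. It contains no loops, because the only edge parallel to $p_{31}$ is $n_{31}$, which becomes an unbalanced loop. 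A $2$\dash cycle is balanced exactly when its two edges have the same sign. Assign $\identity{\Gamma}$ to both positive edges, $g$ to both negative edges, and some $h\notin\{\identity{\Gamma},g\}$ to the loop. This is a gaining of the contraction.
\item \emph{Deleting $n_{31}$.} Keep the positive edges at $\identity{\Gamma}$ and set $\gamma(n_{12},1,2)=g$ and $\gamma(n_{23},2,3)=g^{-1}$. The $2$\dash cycles at $12$ and $23$ have nonidentity gain, so they are unbalanced. The triangle $n_{12}n_{23}p_{31}$ has gain $gg^{-1}=\identity{\Gamma}$ and is balanced. The triangles with exactly one negative edge have gain $g$ or $g^{-1}$ and are unbalanced. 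I expect this to be the main place needing care, but every cycle of the deletion has been listed: the two $2$\dash cycles and four triangles, none using $n_{31}$. The check succeeds without using the order of $g$.
\end{itemize}
Every single-edge minor is therefore $\Gamma$\dash gainable. By the minor-closure of $\Gamma$\dash gainability (\cite{MR1007712}*{Corollary~5.7}), every proper minor of $\pm K_3$ is $\Gamma$\dash gainable, so $\pm K_3$ is an excluded minor for $\Gamma$\dash gainability.
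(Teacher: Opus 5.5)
Your proof is correct and is essentially the paper's argument. It uses the same involution gaining and the same switching computation showing that some non\dash identity gain must square to the identity. It then gives explicit gainings for one deletion (of a negative edge) and one contraction (of a positive edge), and your resigning argument makes explicit the paper's ``without loss of generality''. The one thing to state precisely is that, in the contraction, the two negative edges must carry $g$ in the same direction, e.g.\ $\gamma(n_{12},w,2)=\gamma(n_{23},w,2)=g$ with $w$ the contracted vertex (this is just your deletion gaining restricted). Otherwise the balanced $2$\dash cycle $\{n_{12},n_{23}\}$ would get gain $g^{2}\neq\identity{\Gamma}$.
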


\begin{proof}
Assume that the vertices of $\pm K_{3}$ are $1$, $2$, and $3$.
Let $a$ and $b$ be the parallel edges joining $1$ to $2$, let $c$ and $d$ be the edges joining $2$ and $3$, and let $e$ and $f$ be the edges joining $1$ and $3$.
We assume that a cycle is balanced if and only if it contains an even number of edges from $\{b,d,f\}$.
If $h$ is an element of $\Gamma$ with order two, then we can let $\gamma$ be the gaining such that
\[\gamma(a,1,2)=\gamma(c,2,3)=\gamma(e,3,1)=\identity{\Gamma}\]
and
\[
\gamma(b,1,2)=\gamma(d,2,3)=\gamma(f,3,1)=h.
\]
This proves the first statement.

Now let $\Gamma$ be a non-trivial group, and assume that $\gamma$ is a $\Gamma$\dash gaining of $\pm K_{3}$.
By applying a switching, we can assume that $\gamma$ sends the edges $a$ and $c$ to $\identity{\Gamma}$.
Since $\{a,c,e\}$ is a balanced cycle, it now follows that $\gamma$ sends $e$ to $\identity{\Gamma}$.
Let $\alpha$ be the element $\gamma(b,1,2)$.
Since $\{a,b\}$ is an unbalanced cycle, the order of $\alpha$ is greater than one.
Because $\{b,d,e\}$ is a balanced cycle, it follows that $\gamma(d,2,3)=\alpha^{-1}$.
Now $\{d,f,a\}$ is a balanced cycle, so $\gamma(f,3,1)=\alpha$.
As $\{b,f,c\}$ is balanced, this means that $\alpha^{2}=\identity{\Gamma}$.
So now we have established that $\pm K_{3}$ is $\Gamma$\dash gainable if and only if $\Gamma$ contains an element of order two.

We consider the proper minors of $\pm K_{3}$.
Let $\Gamma$ be an arbitrary non-trivial group and let $\alpha$ be a non-identity element.
First we suppose that we will delete an edge.
Since we can apply a switching so that the edges of an arbitrary spanning tree receive the identity element, we can assume without loss of generality that we are going to delete the edge $b$.
Let $\gamma$ be the $\Gamma$\dash gaining that takes $a$, $c$, and $e$ to the identity element and where $\gamma(d,2,3)=\gamma(f,1,3)=\alpha$.
It is not difficult to check that the only cycles in $\cB_{\gamma}$ have the edge sets $\{a,c,e\}$ and $\{a,d,f\}$, so $\gamma$ is a gaining of $\pm K_{3}\backslash b$, as desired.

Next we consider contracting the edge $a$.
Let $w$ be the vertex obtained via identifying $1$ and $2$.
Let $\gamma$ be the gaining where $c$ and $e$ are taken to $\identity{\Gamma}$ and where
\[\gamma(b,w,w)=\gamma(d,3,w)=\gamma(f,3,w)=\alpha.\]
The only cycles in $\cB_{\gamma}$ have edge sets $\{c,e\}$ or $\{d,f\}$, so $\gamma$ is a gaining of $\pm K_{3}/a$.
\end{proof}

Recall that the biased graph $-G$ consists of the graph $G$ along with the collection of all even-length cycles.

\begin{proposition}
\label{-K4-exminor}
Let $\Gamma$ be a non-trivial group.
If $\Gamma$ contains an element of order two, then $-K_{4}$ is $\Gamma$\dash gainable.
Otherwise $-K_{4}$ is an excluded minor for $\Gamma$\dash gainability.
\end{proposition}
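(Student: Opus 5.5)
Following the proof of \cref{pmK3-exminor}, I would treat three things in turn: gainability when $\Gamma$ has an element of order two, non-gainability otherwise, and gainability of all proper minors over every non-trivial $\Gamma$.

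\textbf{Order two.} Let $h\in\Gamma$ have order two and assign $h$ to every edge of $K_4$. A cycle of length $k$ then has gain $h^k$, which is the identity exactly when $k$ is even. So $\cB_\gamma$ is the set of even cycles, and $-K_4$ is $\Gamma$\dash gainable.

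\textbf{No element of order two.} Label the vertices $1,2,3,4$ and suppose $\gamma$ is a $\Gamma$\dash gaining of $-K_4$. Switch so that the three edges of the star at vertex $4$ receive $\identity{\Gamma}$. Write $x=\gamma(12)$, $y=\gamma(23)$ and $z=\gamma(31)$, each read in the stated direction. Each triangle $4ij$ is unbalanced, so $x,y,z\neq\identity{\Gamma}$. The $4$\dash cycles give relations:
\begin{itemize}
\item $4\!-\!1\!-\!2\!-\!3\!-\!4$ is balanced, so $xy=\identity{\Gamma}$;
\item $4\!-\!2\!-\!3\!-\!1\!-\!4$ gives $yz=\identity{\Gamma}$;
\item $4\!-\!3\!-\!1\!-\!2\!-\!4$ gives $zx=\identity{\Gamma}$.
\end{itemize}
From these, $y=x^{-1}$ and $z=y^{-1}=x$, and then $zx=\identity{\Gamma}$ forces $x^2=\identity{\Gamma}$. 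Since $x\ne\identity{\Gamma}$, $x$ has order two, a contradiction. As a check, the triangle $123$ then has gain $xyz=x$, which is consistent with it being unbalanced.

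\textbf{Proper minors.} By symmetry of $K_4$ and of its bias, there is one deletion and one contraction to handle.
\begin{itemize}
\item \emph{Deleting an edge.} Delete $12$. The remaining graph is $K_4$ minus an edge; its cycles are the triangles $134$ and $234$ and the $4$\dash cycle $1\!-\!3\!-\!2\!-\!4\!-\!1$. Only the $4$\dash cycle is balanced. The edge $34$ lies on both triangles but not on the $4$\dash cycle, so it is a balancing edge. By \cref{prop:balancing-edge}, this minor is gainable over every non-trivial group.
\item \emph{Contracting an edge.} Contract $34$ into a vertex $w$. The result has vertices $1,2,w$, a single edge $12$, and parallel pairs $\{13,14\}$ and $\{23,24\}$. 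The $2$\dash cycles $\{13,14\}$ and $\{23,24\}$ come from the triangles $134$ and $234$, so they are unbalanced. A triangle in the minor is $12$ together with one edge from each pair. Its bias is that of the corresponding cycle of $-K_4$: a triangle, hence unbalanced, when the two edges meet the same endpoint of $34$; a $4$\dash cycle through $34$, hence balanced, when they meet different endpoints. For any non-identity $\alpha$, assign $\identity{\Gamma}$ to $12$, $13$ and $23$, and $\alpha$ to $14$ and $24$, all read from $\{1,2\}$ towards $w$. Then both $2$\dash cycles have gain $\alpha^{\pm1}$. The triangle through $12$, $13$, $23$ has gain $\identity{\Gamma}$, and the one through $12$, $14$, $24$ has gain $\alpha\alpha^{-1}=\identity{\Gamma}$. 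The two mixed triangles have gain $\alpha^{\pm1}$. This exactly reverses the pattern required. So instead give $23$ and $24$ the reversed assignment: $\identity{\Gamma}$ on $12$, $13$ and $24$, and $\alpha$ on $14$ and $23$. Now the same-endpoint triangles have gain $\alpha^{\pm1}$ and the mixed ones have gain $\identity{\Gamma}$, as required.
\end{itemize}
Since every proper minor is a minor of one of these two, and gainability is minor-closed, every proper minor is $\Gamma$\dash gainable.

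The main obstacle is the contraction case: one has to track correctly which cycles of $-K_4$ each minor cycle lifts to, and then choose the gains consistently, as the first attempt above shows.
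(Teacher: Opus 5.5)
Your proof is correct and follows the paper's argument: the same gaining that puts $h$ on every edge, and the same switching at the star of vertex $4$ followed by the three balanced $4$\dash cycles forcing $x^{2}=\identity{\Gamma}$. The only differences are in checking proper minors. You dispose of $-K_4\backslash e$ with \cref{prop:balancing-edge} where the paper writes down an explicit gaining, and you verify $-K_4/e$ by an explicit gaining (your corrected second assignment is right). The paper instead notes that $-K_4/e$ is isomorphic to $\pm K_3$ with one edge deleted and invokes \cref{pmK3-exminor}, which would have spared you the false start.
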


\begin{figure}
\centering

\begin{tikzpicture}

\node[name = u1] at (90:1.5cm) {};
\node[name = u2] at (210:1.5cm) {};
\node[name = u3] at (-30:1.5cm) {};
\node[name = u4] at (0:0cm) {};

\draw[thick] (u1.center) -- (u2.center) node [pos = 0.5, left = 1mm] {$a$};
\draw[thick] (u2.center) -- (u3.center) node [pos = 0.5, below = 1mm] {$b$};
\draw[thick] (u3.center) -- (u1.center) node [pos = 0.5, right = 1mm] {$c$};
\draw[thick] (u4.center) -- (u1.center);
\draw[thick] (u4.center) -- (u2.center);
\draw[thick] (u4.center) -- (u3.center);

\node[above = 1mm] at (u1) {$1$};
\node[left = 1mm] at (u2) {$2$};
\node[right = 1mm] at (u3) {$3$};
\node[above right] at (u4) {$4$};

\filldraw[thick, fill = white] (u1) circle (1mm);
\filldraw[thick, fill = white] (u2) circle (1mm);
\filldraw[thick, fill = white] (u3) circle (1mm);
\filldraw[thick, fill = white] (u4) circle (1mm);

\end{tikzpicture}

\caption{A labelling of the vertices and edges in $K_{4}$.}
\label{fig:-K4}
\end{figure}

\begin{proof}
Let the vertices of $-K_{4}$ be labelled as in \cref{fig:-K4}.
We label three of the edges $a$, $b$, and $c$.
If $h$ is an element of $\Gamma$ with order two, then we can let $\gamma$ be the gaining that takes every edge to $h$, and then a cycle is in $\cB_{\gamma}$ if and only if it has an even number of edges.
Therefore $\gamma$ is a $\Gamma$\dash gaining of $-K_{4}$.

Now let $\Gamma$ be a non-trivial group, and assume that $\gamma$ is a $\Gamma$\dash gaining of $-K_{4}$.
By applying a switching, we can assume that $\gamma$ sends every edge incident with $4$ to $\identity{\Gamma}$.
Let $\alpha$ be $\gamma(a,1,2)$, and note that $\alpha\ne \identity{\Gamma}$.
The cycle containing $a$ and $b$ but not $c$ is balanced, so $\gamma(b,2,3)=\alpha^{-1}$.
The cycle containing $b$ and $c$ but not $a$ is balanced, which means that $\gamma(c,3,1)=\alpha$.
Now the cycle containing $a$ and $c$ but not $b$ implies that $\alpha^{2}=\identity{\Gamma}$, so $\Gamma$ has an element of order two.

Let $\Gamma$ be an arbitrary non-trivial group and let $\alpha$ be a non-identity element.
The gaining $\gamma$, where the edges incident with $4$ are taken to $\identity{\Gamma}$ and where $\gamma(b,2,3)=\gamma(c,1,3)=\alpha$, is a gaining of $-K_{4}\backslash a$.
We can easily check that contracting any edge from $-K_{4}$ produces the same biased graph as deleting an edge from $\pm K_{3}$.
Therefore \cref{pmK3-exminor} implies that any such contraction of $-K_{4}$ is $\Gamma$\dash gainable.
This shows that any proper minor of $-K_{4}$ is gainable over any non-trivial group.
\end{proof}

\section{Uniqueness of gainings}

Establishing the uniqueness of $\mathbb{F}_{3}$\dash representations is a critical step in finding the excluded minors for the class of ternary matroids.
In this section we carry out the analogous step for $\ZZ_{3}$\dash gainable biased graphs by showing that $\ZZ_{3}$\dash gainings are essentially unique.
Neudauer and Slilaty show that for any finite group $\Gamma$, there is an upper bound on the number of inequivalent gainings of vertically $3$\dash connected biased graphs~\cite{MR3575200}.
So our results show that this upper bound is one, when $|\Gamma|=3$.

\begin{definition}[Rounded ordering]
\label{def:rounded}
Let $G$ be a connected graph and let $F$ be a set of edges such that $G[F]$ contains a spanning tree of $G$.
Let $S$ be the set of non-loop edges not in $F$.
Let $s_{0},s_{1},\ldots, s_{t}$ be an ordering of $S$ such that for every $i$ the following conditions hold:
\begin{enumerate}[label = \textup{(\roman*)}]
\item there is a block, $B_{i}$, of $G[F\cup \{s_{0},s_{1},\ldots, s_{i}\}]$ that contains all of the edges $s_{0},s_{1},\ldots, s_{i}$,
\item between any pair of distinct vertices in $B_{i}$ there is a path of $B_{i}$ that uses only edges in $F$ as well as a path that uses exactly one edge in $\{s_{0},s_{1},\ldots, s_{i}\}$, and
\item if $i>0$, then $s_{i}$ is in a cycle of $B_{i}$ that contains exactly one edge in $\{s_{0},s_{1},\ldots, s_{i-1}\}$.
\end{enumerate}
In this case we say that $s_{0},s_{1},\ldots, s_{t}$ is a \emph{rounded} ordering of $S$.
\end{definition}

\begin{proposition}
\label{prop:rounded}
Let $G$ be a connected graph with no proper $1$\dash separations.
Let $F$ be a set of edges such that $G[F]$ contains a spanning tree of $G$.
Let $S$ be the set of non-loop edges not in $F$.
There is a rounded ordering of $S$.
\end{proposition}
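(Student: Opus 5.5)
We build the ordering greedily, one edge at a time, and verify conditions (i)--(iii) inductively. If $S$ is empty there is nothing to prove. Otherwise we let $s_0$ be any edge of $S$ and let $B_0$ be the block of $G[F\cup\{s_0\}]$ containing $s_0$. Since $G[F]$ contains a spanning tree, the ends of $s_0$ are joined by an $F$\dash path, so $s_0$ lies in a cycle. Hence $B_0$ is a biconnected graph with at least two edges and $B_0\backslash s_0$ is connected. This gives the $F$\dash path required in (ii). For the path using $s_0$, take distinct $x,y\in V(B_0)$. By Menger's theorem in the biconnected graph $B_0$ there are two disjoint paths from $\{x,y\}$ to the ends $\{u,v\}$ of $s_0$. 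Taking each to be minimal, neither uses $s_0$, and joining them with $s_0$ gives an $x$\dash $y$ path using exactly one $S$\dash edge. Condition (iii) is vacuous for $i=0$.

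For the inductive step, suppose $s_0,\dots,s_i$ and $B_i$ satisfy (i)--(iii), and let $R=S-\{s_0,\dots,s_i\}\neq\emptyset$. We look for an \emph{ear} of $B_i$. By this we mean a path $P$ in $G[F\cup R]$ joining two distinct vertices $x,y$ of $B_i$, internally disjoint from $B_i$, and containing exactly one edge $s\in R$. Given such an ear, we set $s_{i+1}=s$. Condition (ii) for $B_i$ provides an $x$\dash $y$ path $Q$ in $B_i$ using exactly one of $s_0,\dots,s_i$. Then $P\cup Q$ is a cycle through $s_{i+1}$ containing exactly one earlier $S$\dash edge, which gives (iii). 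The same cycle shows that $B_i\cup P$ is biconnected, so it lies in a single block $B_{i+1}$ of $G[F\cup\{s_0,\dots,s_{i+1}\}]$, which gives (i). For (ii) on $B_{i+1}$: since $B_{i+1}$ is biconnected, we reuse the Menger argument. Two disjoint paths from a pair of vertices to the ends of some chosen $S$\dash edge, together with the inductive hypothesis on $B_i$ and the $F$\dash subpaths of $P$, let us assemble the required paths. The only extra care is that $B_{i+1}$ may absorb further $F$\dash edges, but these are harmless, since all of its $S$\dash edges are $s_0,\dots,s_{i+1}$.

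The easy case is when some $s\in R$ has both ends in $V(B_i)$. Then $P=s$ is itself an ear.

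The main obstacle is finding an ear when every edge of $R$ has an end outside $V(B_i)$. First I would rule out the case $V(B_i)=V(G)$; if this held, the easy case would already apply to any edge of $R$. So consider the bridges of $B_i$ in $G$, meaning the components of $G-V(B_i)$ together with their attachment edges, and choose one bridge $H$ containing an edge of $R$. Because $G$ has no proper $1$\dash separation, $H$ attaches to at least two distinct vertices of $B_i$. Hence there is a path through $H$ between two distinct attachments. Among all such paths we choose one minimising the number of $R$\dash edges. We then argue that the minimum is exactly one.
\begin{itemize}
\item It is not zero. An all-$F$ ear would lie in the same block as $B_i$ in $G[F\cup\{s_0,\dots,s_i\}]$, contradicting the maximality of $B_i$.
\item It is not two or more. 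We shortcut using the spanning tree in $G[F]$. Take a vertex $w$ of $H$ just past the first $R$\dash edge of the path. Follow the tree path from $w$ towards $B_i$. Splicing this tree path into the original path gives either a path with fewer $R$\dash edges or an ear made entirely of $F$\dash edges; both contradict the choice. I expect this splicing, and in particular the handling of the case where the tree path returns to the same attachment vertex, to be the delicate part of the whole proof.
\end{itemize}
Once the ear exists, the induction continues until $R$ is empty, and the result is a rounded ordering of $S$.
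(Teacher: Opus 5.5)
Your overall skeleton matches the paper's: grow the ordering one ear at a time, with $B_{i+1}$ the block containing $B_i\cup P$. You find the ear differently, though, and the step your route hinges on is left unproved. The paper needs no extremal choice. It takes a cut-vertex $w_1$ of $G[F\cup\{s_0,\dots,s_k\}]$ lying in $B_k$. Since $G-w_1$ is connected, some edge leaves the component of $G[F\cup\{s_0,\dots,s_k\}]-w_1$ containing $B_k-w_1$, and that edge is $s_{k+1}$. Two $F$\dash paths, one forced through $w_1$ and one avoiding it, then complete an ear that automatically has exactly one new edge.

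Your bridge-and-minimise route is also valid, but its content is precisely the splicing you flagged as delicate. It closes as follows.
\begin{itemize}
\item Write $P=p_0p_1\cdots p_m$ with $p_0=x$, $p_m=y$, and $r\ge 2$ edges of $R$. Let $p_jp_{j+1}$ be the first $R$\dash edge and set $w=p_{j+1}$.
\item Let $Q$ be a minimal $w$\dash $V(B_i)$ path in the spanning tree, ending at $z$. Then $Q$ has no $R$\dash edges, and $Q-z$ lies in the same component of $G-V(B_i)$ as the interior of $P$.
\item If $z\neq x$, let $t$ be least with $p_t\in V(Q)$, so $1\le t\le j+1$. Then $p_0\cdots p_t$ followed by $p_tQz$ is a path of the kind you minimise over, with at most one $R$\dash edge.
\item If $z=x$, then $z\neq y$. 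Let $t$ be greatest with $p_t\in V(Q)$, so $j+1\le t\le m-1$. Then $p_m\cdots p_t$ followed by $p_tQx$ is such a path, and all its $R$\dash edges come after $p_jp_{j+1}$, so it has at most $r-1$.
\end{itemize}
Either way minimality fails. So the ``returns to the same attachment'' case is handled simply by scanning $P$ from the other end.

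Your verification of (ii) for $B_{i+1}$ also does not work as described. Two disjoint paths from $\{x,y\}$ to the ends of a chosen $S$\dash edge may run through $B_i$ and pick up several of $s_0,\dots,s_i$, so joining them gives no control over the count.
\begin{itemize}
\item Aim Menger at $V(B_i)$ instead, truncating each path at its first vertex in $V(B_i)$. Such paths use no edge of $B_i$, so the only $S$\dash edge either can contain is $s_{i+1}$, and it lies on at most one of them.
\item Close up inside $B_i$ with the one-$S$\dash edge path if neither contains $s_{i+1}$, and with the $F$\dash path otherwise.
\item For the $F$\dash path in $B_{i+1}$: since all $S$\dash edges of $B_{i+1}$ lie in $B_i\cup P$, a minimal path from any vertex to $V(B_i)\cup V(P)$ uses only $F$\dash edges. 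From a vertex of $P$ you then reach $B_i$ along $P$ without crossing $s_{i+1}$, and finish with an $F$\dash path inside $B_i$.
\end{itemize}
This is exactly how the paper checks (ii).
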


\begin{proof}
If $S$ is empty the result holds vacuously, so we assume $S\ne \emptyset$.
This implies that $G$ has at least two vertices.
Let $s_{0},s_{1},\ldots, s_{k}$ be any sequence of distinct elements of $S$.
We will say the sequence is \emph{good} if conditions (i), (ii), and (iii) from \cref{def:rounded} hold for every $i\in \{0,1,\ldots, k\}$.

We start by showing that there is a non-empty good sequence.
Let $s_{0}$ be an arbitrary edge in $S$.
Since $G[F]$ contains a spanning tree, it follows that $G[F\cup s_{0}]$ contains a cycle that contains $s_{0}$.
This cycle is contained in a block $B_{0}$ of $G[F\cup s_{0}]$.
Let $x$ and $y$ be an arbitrary pair of distinct vertices in $B_{0}$.
Since $B_{0}$ is biconnected, there is a cycle $C$ that contains $s_{0}$ and $x$.
Again using biconnectivity, we can find a $y$\dash $C$ path $P$ of $B_{0}$ such that $x$ is not in $P$.
Let $y'$ be the vertex of $P$ that is in $C$.
Thus $y'\ne x$.
There are two subpaths of $C$ that join $x$ and $y'$.
By concatenating these subpaths with $P$, we obtain a path of $B_{0}$ between $x$ and $y$ that uses only edges in $F$, and another path between $x$ and $y$ that contains exactly one edge in $S$.
This shows that $s_{0}$ is a non-empty good sequence.

Choose $s_{0},s_{1},\ldots, s_{k}$ to be a good sequence of maximum length.
Let $B_{k}$ be the block of $G[F\cup\{s_{0},s_{1},\ldots, s_{k}\}]$ that contains $\{s_{0},s_{1},\ldots, s_{k}\}$.
Note that $k\geq 0$ and that $B_{k}$ has at least two vertices.
If $S=\{s_{0},s_{1},\ldots, s_{k}\}$, then we have a rounded ordering of $S$ and we are done, so we assume that there is some edge in $S-\{s_{0},s_{1},\ldots, s_{k}\}$.
First assume there exists such an edge, $e$, joining two vertices $u$ and $v$ in $B_{k}$.
We set $s_{k+1}$ to be $e$.
We will show that $s_{0},s_{1},\ldots, s_{k+1}$ is a good sequence.
Because $s_{k+1}$ joins two vertices of $B_{k}$, there is a block of $G[F\cup\{s_{0},s_{1},\ldots, s_{k+1}\}]$ that contains $B_{k}$ and $s_{k+1}$.
Let this block be $B_{k+1}$.
To show that $s_{0},s_{1},\ldots, s_{k+1}$ is a good sequence, the only condition that requires checking is that there is a cycle of $B_{k+1}$ that contains $s_{k+1}$ and exactly one edge in $\{s_{0},s_{1},\ldots, s_{k}\}$.
This is true because $B_{k}$ has a path between $u$ and $v$ that contains exactly one edge in $\{s_{0},s_{1},\ldots, s_{k}\}$.
Now we have contradicted the maximality of $s_{0},s_{1},\ldots, s_{k}$.
Therefore we will now assume that there is no edge in $S-\{s_{0},s_{1},\ldots, s_{k}\}$ that joins two vertices in $B_{k}$.

Because $S-\{s_{0},s_{1},\ldots, s_{k}\}$ is non-empty and any edge in this set is incident with a vertex not in $B_{k}$, there is at least one vertex not in $B_{k}$.
This means that $G[F\cup\{s_{0},s_{1},\ldots, s_{k}\}]$ has at least two blocks, and therefore a cut-vertex.
Let $w_{1}$ be a cut-vertex of $G[F\cup\{s_{0},s_{1},\ldots, s_{k}\}]$ that is contained in $B_{k}$.
Let $H$ be the connected component of
\[
G[F\cup\{s_{0},s_{1},\ldots, s_{k}\}] - w_{1}
\]
that contains $B_{k}-w_{1}$.
Since $w_{1}$ is a cut-vertex, not every vertex of $G[F\cup\{s_{0},s_{1},\ldots, s_{k}\}]$ is contained in $H$.
As $G$ has no cut-vertex, $G-w_{1}$ is connected.
Therefore some edge of $G-w_{1}$ joins a vertex of $H$ to a vertex that is not in $H$.
Let $s_{k+1}$ be such an edge.
Note that $s_{k+1}$ is not in $F$, since it joins two different connected components of $G[F\cup\{s_{0},s_{1},\ldots, s_{k}\}] - w_{1}$.
Assume that $s_{k+1}$ joins the vertices $u$ and $v$, where $v$ is in $H$ and $u$ is not.
Since $G[F\cup\{s_{0},s_{1},\ldots, s_{k}\}]$ is connected, we can let $P_{1}$ be a $u$\dash $B_{k}$ path of this graph.
Note that $w_{1}$ must be in $P_{1}$, because otherwise $P_{1}$ is a path of $G[F\cup\{s_{0},s_{1},\ldots, s_{k}\}]-w_{1}$ that connects $u$ to a vertex of $H$.
Therefore $w_{1}$ is the unique vertex of $B_{k}$ that is in $P_{1}$.
We can also let $P_{2}$ be a $v$\dash $(B_{k}-w_{1})$ path of $G[F\cup\{s_{0},s_{1},\ldots, s_{k}\}]-w_{1}$.
Let $w_{2}$ be the unique vertex of $B_{k}$ in $P_{2}$.
Since $B_{k}$ contains all the edges $s_{0},s_{1},\ldots, s_{k}$, it follows that both $P_{1}$ and $P_{2}$ are $F$\dash paths.
Let $P$ be the concatenation of $P_{1}$, $s_{k+1}$, and $P_{2}$.
Thus $P$ is a path of $G[F\cup\{s_{0},s_{1},\ldots, s_{k+1}\}]$ from $w_{1}$ to $w_{2}$ and every edge of $P$ aside from $s_{k+1}$ is in $F$.

As $w_{1}$ and $w_{2}$ are distinct vertices of $B_{k}$, there must be some block of $G[F\cup\{s_{0},s_{1},\ldots, s_{k+1}\}]$ that contains $B_{k}$ and $P$.
Let $B_{k+1}$ be this block.
There is a path of $B_{k}$ from $w_{1}$ to $w_{2}$ that uses a single edge in $\{s_{0},s_{1},\ldots, s_{k}\}$.
The union of this path with $P$ is a cycle of $B_{k+1}$ that contains $s_{k+1}$ and a single edge in $\{s_{0},s_{1},\ldots, s_{k}\}$.
To show that $s_{0},s_{1},\ldots, s_{k+1}$ is a good sequence, it now suffices to show that between any pair of vertices in $B_{k+1}$, there is a path of $B_{k+1}$ that uses only edges in $F$ as well as a path that uses a single edge in $\{s_{0},s_{1},\ldots, s_{k+1}\}$.

Let $x$ and $y$ be an arbitrary pair of distinct vertices in $B_{k+1}$.
If both $x$ and $y$ are in $B_{k}$ then our claimed paths exist by the inductive hypothesis, so we assume that $x$ is not in $B_{k}$.
Let $P_{x}'$ be a minimal path of $B_{k+1}$ that contains $x$ and a vertex in the union of $B_{k}$ and $P$.
Note that any edge of $P_{x}'$ belongs to $F$, since $
s_{0},s_{1},\ldots, s_{k}$ are all contained in $B_{k}$ and $s_{k+1}$ is in $P$.
Let $x'$ be the unique vertex of $P_{x}'$ that is in either $B_{k}$ or $P$.
If $x'$ is in $P$, then by choosing an appropriate subpath of $P$, we can find a path of $G[F]$ from $x'$ to a vertex in $B_{k}$.
In any case, there is a path of $B_{k+1}$ from $x$ to a vertex of $B_{k}$ that uses only edges in $F$.
Let this path be $P_{x}$ and let $x_{1}$ be the vertex of $P_{x}$ that is in $B_{k}$.
If $y$ is in $B_{k}$, then there is a path of $B_{k}$ between $y$ and $x_{1}$ that uses only edges in $F$, as well as a path that uses a single edge in $\{s_{0},s_{1},\ldots, s_{k+1}\}$.
By concatenating these paths with $P_{x}$, we obtain our desired paths of $B_{k+1}$.
Therefore we assume that $y$ is not in $B_{k}$.

By the same argument as before, there is a path $P_{y}$ of $B_{k+1}$ which joins $y$ to a vertex in $B_{k}$ and uses only edges in $F$.
Let $y_{1}$ be the vertex in $P_{y}$ that is in $B_{k}$.
Now $x_{1}$ and $y_{1}$ are joined by a path of $B_{k}$ that is also a path of $G[F]$.
By concatenating this path with $P_{x}$ and $P_{y}$ we obtain a walk from $x$ to $y$ using edges in $F$.
Therefore $B_{k+1}$ contains a path of $G[F]$ from $x$ to $y$.
Now we need only show that there is a path of $B_{k+1}$ from $x$ to $y$ that uses a single edge in $\{s_{0},s_{1},\ldots, s_{k+1}\}$.

Since $B_{k+1}$ is biconnected, Menger's Theorem says there is a pair of disjoint paths joining $\{x,y\}$ to vertices in $B_{k}$.
Assume that both paths are minimal with this property, so that each one contains a unique vertex in $B_{k}$.
Assume that one of these paths joins $x$ to $x_{2}$ and the other joins $y$ to $y_{2}$.
If both of these paths are paths of $G[F]$, then we concatenate them with a path of $B_{k}$ that uses a single edge in $\{s_{0},s_{1},\ldots, s_{k}\}$ to obtain the path we desire.
Otherwise, $s_{k+1}$ is in exactly one of the paths, and the other is a path of $G[F]$.
Now we concatenate the pair of paths with a path of $B_{k}$ from $x_{2}$ to $y_{2}$ that uses only edges in $F$.
We obtain a path of $B_{k+1}$ that contains $s_{k+1}$ and no edges in $\{s_{0},s_{1},\ldots, s_{k}\}$ so we are done.
We have shown that $s_{0},s_{1},\ldots, s_{k+1}$ is a good sequence, contradicting the choice of $s_{0},s_{1},\ldots, s_{k}$.
\end{proof}

\begin{proposition}
\label{prop:two-edges-in-cycle}
Let $\gamma$ and $\gamma'$ be $\ZZ_{3}$\dash gainings of the biased graph $\Omega$. 
Let $e_{1}$ and $e_{2}$ be distinct edges in a cycle $C$, and let $\arc{a}_{1}$ and $\arc{a}_{2}$ be arcs of $e_{1}$ and $e_{2}$ respectively.
Assume that $\gamma$ and $\gamma'$ take every edge in $C$ other than $e_{1}$ and $e_{2}$ to $1$ and that neither $\gamma$ nor $\gamma'$ takes $\arc{a}_{1}$ or $\arc{a}_{2}$ to $1$.
If $\gamma(\arc{a}_{1})=\gamma'(\arc{a}_{1})$ then $\gamma(\arc{a}_{2})=\gamma'(\arc{a}_{2})$.
\end{proposition}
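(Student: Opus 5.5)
Since the statement concerns a single cycle $C$, the plan is to compute the gain of $C$ under both gainings and compare. In $\ZZ_{3}$ every non-identity element is $\omega$ or $\omega^{2}=\omega^{-1}$. Also, $\gamma$ and $\gamma'$ are gainings of the same biased graph $\Omega$, so $C$ has gain $1$ under $\gamma$ if and only if it has gain $1$ under $\gamma'$.

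First I fix a simple closed walk $W$ around $C$ that traverses $e_{1}$ in the direction of $\arc{a}_{1}$. Then $W$ traverses $e_{2}$ either along $\arc{a}_{2}$ or along $\arc{a}_{2}^{-1}$; replacing $\arc{a}_{2}$ by its inverse if necessary (which does not affect the claim, since $\gamma(\arc{a}_{2}^{-1})=\gamma(\arc{a}_{2})^{-1}$ and similarly for $\gamma'$), I may assume $W$ uses $\arc{a}_{2}$. Here $e_{1},e_{2}$ are non-loop edges, because a cycle containing two distinct edges has no loops. Every other arc of $W$ is taken to $1$ by both gainings, and $\ZZ_{3}$ is abelian, so
\[
\gamma(W)=\gamma(\arc{a}_{1})\gamma(\arc{a}_{2}), \qquad \gamma'(W)=\gamma'(\arc{a}_{1})\gamma'(\arc{a}_{2}).
\]

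Write $x=\gamma(\arc{a}_{1})=\gamma'(\arc{a}_{1})\neq 1$. The values $y=\gamma(\arc{a}_{2})$ and $y'=\gamma'(\arc{a}_{2})$ both lie in $\{\omega,\omega^{2}\}$. Then $\gamma(W)=1$ if and only if $y=x^{-1}$, and $\gamma'(W)=1$ if and only if $y'=x^{-1}$.

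Because $C$ is balanced under $\gamma$ exactly when it is balanced under $\gamma'$, we get $y=x^{-1}\iff y'=x^{-1}$. Since $\{\omega,\omega^{2}\}$ has only two elements, $y\neq x^{-1}$ forces $y=x$, and likewise for $y'$. So in either case $y=y'$.

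No step is a real obstacle. The only care needed is the orientation bookkeeping in the first step: reading $W$ from a suitable starting vertex so that both $\arc{a}_{1}$ and $\arc{a}_{2}$, or their inverses, appear correctly. Commutativity of $\ZZ_{3}$ removes any dependence on the starting point.
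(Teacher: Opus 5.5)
Your proof is correct and follows essentially the same route as the paper: fix a simple closed walk of $C$ using $\arc{a}_{1}$ and $\arc{a}_{2}$, reduce its gain to the product of the two arc gains, and split on whether $C$ is balanced, using that $\ZZ_{3}$ has only two non-identity elements. Your orientation bookkeeping (inverting $\arc{a}_{2}$ if needed, and noting that $e_{1},e_{2}$ are non-loops) just makes explicit what the paper covers with ``without loss of generality''.
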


\begin{proof}
Without loss of generality we can assume that $W$ is a simple closed walk contained in $C$ such that $\arc{a}_{1}$ and $\arc{a}_{2}$ are both arcs of $W$ and that $\arc{a}_{1}$ appears in $W$ before $\arc{a}_{2}$.
Assume that $\gamma(\arc{a}_{1})=\gamma'(\arc{a}_{1})$ and write $\pi$ for this common value.
The gain of $W$ is equal to $\pi\gamma(\arc{a}_{2})$ under $\gamma$ and equal to $\pi\gamma'(\arc{a}_{2})$ under $\gamma'$.
If $C$ is balanced, then
\[
\pi\gamma(\arc{a}_{2})=1=\pi\gamma'(\arc{a}_{2})
\]
which implies the result.
Now we assume that $C$ is not balanced.
This means that $\pi\gamma(\arc{a}_{2}) \ne 1$.
Since the two terms in this product are non-identity elements of $\ZZ_{3}$, we conclude that $\pi = \gamma(\arc{a}_{2})$.
The same argument shows $\pi = \gamma'(\arc{a}_{2})$ so the result follows.
\end{proof}

\begin{proposition}
\label{prop:equal-on-spanning-tree}
Let $\Omega$ be a connected biased graph and let $\gamma$ and $\gamma'$ be $\ZZ_{3}$\dash gainings of $\Omega$.
Let $T$ be a spanning tree of $\Omega$ and assume that $\gamma$ and $\gamma'$ both send all edges of $T$ to $1$.
\begin{enumerate}[label = \textup{(\roman*)}]
\item If every non-loop cycle is balanced, then $\gamma$ and $\gamma'$ both send all non-loop edges to $1$.
\item If $\Omega$ has no proper $1$\dash separation, then there exists an automorphism $\tau$ of $\ZZ_{3}$ such that $\gamma'(\arc{a})=\tau(\gamma(\arc{a}))$ whenever \arc{a} is an arc of a non-loop edge.
\end{enumerate}
\end{proposition}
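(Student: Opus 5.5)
Part (i) is immediate from \cref{prop:agree-on-tree}-style reasoning. Let $e$ be a non-loop edge not in $T$; the union of $T$ and $e$ contains a cycle $C$ through $e$. This cycle is balanced by hypothesis. Every other edge of $C$ lies in $T$ and is sent to $1$. Hence the gain of a simple closed walk around $C$ equals the gain of an arc of $e$, so both $\gamma$ and $\gamma'$ send $e$ to $1$.

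For (ii), let $F$ be the set of non-loop edges that $\gamma$ sends to $1$; this set contains $T$. By \cref{prop:agree-on-tree}, $\gamma'$ sends exactly the same non-loop edges to $1$. (The same argument works edge by edge for non-loop edges: if $\gamma(\arc a)=1$ then the fundamental cycle of $e$ is balanced, so $\gamma'(\arc a)=1$.) So we only need to handle $S$, the set of non-loop edges outside $F$.

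If $S$ is empty, take $\tau$ to be the identity. Otherwise \cref{prop:rounded} applies, since $G$ has no proper $1$\dash separation and $G[F]$ contains $T$. It gives a rounded ordering $s_0,\dots,s_t$ of $S$. Fix an arc $\arc{b}_0$ of $s_0$. Both $\gamma(\arc b_0)$ and $\gamma'(\arc b_0)$ are non-identity elements of $\ZZ_3$. Choose $\tau$ to be the automorphism sending $\gamma(\arc b_0)$ to $\gamma'(\arc b_0)$; it is either the identity or inversion. Then $\tau\circ\gamma$ is again a $\ZZ_3$\dash gaining of $\Omega$, because automorphisms preserve which products equal $1$. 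It also sends every edge of $F$ to $1$. Replacing $\gamma$ by $\tau\circ\gamma$, it suffices to show that if $\gamma$ and $\gamma'$ agree on $\arc b_0$, they agree on every arc of every edge in $S$. They already agree on $F$, and agreement on one arc of a non-loop edge implies agreement on the opposite arc.

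We prove by induction on $i$ that $\gamma$ and $\gamma'$ agree on the arcs of $s_0,\dots,s_i$. The case $i=0$ holds by construction. For $i>0$, condition (iii) of \cref{def:rounded} provides a cycle $C$ of $B_i$ containing $s_i$ and exactly one earlier edge $s_j$. Every other edge of $C$ lies in $F$, since $B_i$ is a subgraph of $G[F\cup\{s_0,\dots,s_i\}]$. So both gainings send all edges of $C$ other than $s_j$ and $s_i$ to $1$, and neither sends an arc of $s_j$ or $s_i$ to $1$. By induction the two gainings agree on an arc of $s_j$. \cref{prop:two-edges-in-cycle} then gives agreement on an arc of $s_i$, completing the induction.

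The only delicate point is checking that the cycle supplied by (iii) uses edges only from $F\cup\{s_0,\dots,s_i\}$, so that \cref{prop:two-edges-in-cycle} applies. This holds because $B_i$ is a block of $G[F\cup\{s_0,\dots,s_i\}]$. Loops play no role, since the conclusion only concerns arcs of non-loop edges.
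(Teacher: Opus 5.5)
Your proof is correct and follows essentially the same route as the paper. Both arguments use \cref{prop:agree-on-tree} to identify the edges sent to $1$, take a rounded ordering of the remaining non-loop edges from \cref{prop:rounded}, normalise on $s_0$ with an automorphism of $\ZZ_3$, and propagate agreement along the ordering via condition (iii) and \cref{prop:two-edges-in-cycle}. The only difference is cosmetic: you compose $\tau$ with $\gamma$ rather than with $\gamma'$, which gives $\gamma'=\tau\circ\gamma$ directly without the implicit observation that $\tau^{-1}=\tau$.
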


\begin{proof}
\cref{prop:agree-on-tree} shows that $\gamma$ and $\gamma'$ take the same edges to $1$.
Let $F$ be the set of non-loop edges that are taken to $1$ by $\gamma$ and $\gamma'$.
Therefore $F$ contains all the edges of $T$.
Let $S$ be the set of non-loop edges that are not in $F$.

Assume that every non-loop cycle is balanced.
Let \arc{a} be an arc of an arbitrary non-loop edge, $e$.
Let $C$ be the cycle contained in the union of $T$ and $e$.
Since $C$ is balanced we see that $\gamma(\arc{a})=1=\gamma'(\arc{a})$, so part (i) follows.

Now we assume that $\Omega$ has no proper $1$\dash separation.
If $S$ is empty then every non-loop cycle is balanced and we have reduced to case (i).
Therefore we may assume that $S$ is non-empty.
\cref{prop:rounded} shows that there is a rounded ordering $s_{0},s_{1},\ldots, s_{t}$ of $S$.
For each $i$ let $\arc{a}_{i}$ be an arc of $s_{i}$.
Let $B_{i}$ be the block of $\Omega[F\cup\{s_{0},s_{1},\ldots, s_{i}\}]$ that contains $\{s_{0},s_{1},\ldots, s_{i}\}$.

If $\gamma'(\arc{a}_{0})=\gamma(\arc{a}_{0})$ then we let $\tau$ be the identity automorphism.
Otherwise, we let $\tau$ be the non-trivial automorphism of $\ZZ_{3}$.
We can now rename $\tau\circ \gamma'$ as $\gamma'$ and therefore assume $\gamma'(\arc{a}_{0})=\gamma(\arc{a}_{0})$.
We need to show that $\gamma'(\arc{a}_{i})=\gamma(\arc{a}_{i})$ for every $i$.
This equation holds when $i=0$, so we make the inductive assumption that it holds when $i\leq k$.
Let $C$ be a cycle of $B_{k+1}$ that contains $s_{k+1}$ and exactly one edge $s_{j}$ in $\{s_{0},s_{1},\ldots, s_{k}\}$.
The inductive assumption implies $\gamma'(\arc{a}_{j})=\gamma(\arc{a}_{j})$.
Now \cref{prop:two-edges-in-cycle} implies $\gamma'(\arc{a}_{k+1})=\gamma(\arc{a}_{k+1})$, so the inductive step is complete.
\end{proof}

\begin{corollary}
\label{prop:unique-gaining}
Let $\Omega$ be a connected biased graph such that either every non-loop cycle is balanced, or $\Omega$ has no proper $1$\dash separation.
Assume that $\gamma$ and $\gamma'$ are $\ZZ_{3}$\dash gainings of $\Omega$.
By performing a switching on $\gamma'$ we can obtain a $\ZZ_{3}$\dash gaining $\rho$ such that there exists an automorphism $\tau$ of $\ZZ_{3}$ with the property that $\rho(\arc{a})=\tau(\gamma(\arc{a}))$ whenever \arc{a} is an arc of a non-loop edge.
\end{corollary}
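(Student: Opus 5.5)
The plan is to reduce the corollary to \cref{prop:equal-on-spanning-tree} by switching both gainings so that they are trivial on a common spanning tree. Since $\Omega$ is connected, we fix a spanning tree $T$ of $\Omega$. By \cite{MR1007712}*{Lemma~5.3} there is a function $\eta\colon V(G)\to\ZZ_{3}$ such that $\gamma^{\eta}$ sends every edge of $T$ to $1$. Likewise there is $\eta'$ such that $\gamma'^{\eta'}$ sends every edge of $T$ to $1$. Both switched functions are still $\ZZ_{3}$\dash gainings of $\Omega$, because switching preserves the set of balanced cycles. We may therefore apply \cref{prop:equal-on-spanning-tree} to the pair $\gamma^{\eta}$ and $\gamma'^{\eta'}$.

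In the case that every non-loop cycle is balanced, part~(i) shows that both switched gainings send every non-loop edge to $1$. In the case that $\Omega$ has no proper $1$\dash separation, part~(ii) gives an automorphism $\tau$ of $\ZZ_{3}$ with $\gamma'^{\eta'}(\arc{a})=\tau(\gamma^{\eta}(\arc{a}))$ for every arc $\arc{a}$ of a non-loop edge. In the first case we also take $\tau$ to be the identity, so the same equation holds.

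It remains to move the switching $\eta$ off $\gamma$ and onto the $\gamma'$ side. Here we use that $\ZZ_{3}$ is abelian and that $\tau$ is a homomorphism. Expanding the equation, for an arc $\arc{a}=(e,u,v)$ of a non-loop edge we have
\[
\eta'(u)^{-1}\gamma'(\arc{a})\eta'(v)=\tau(\eta(u))^{-1}\,\tau(\gamma(\arc{a}))\,\tau(\eta(v)).
\]
We define $\rho=\gamma'^{\mu}$, where $\mu(v)=\eta'(v)\,\tau(\eta(v))^{-1}$. Because $\ZZ_{3}$ is abelian, rearranging the displayed equation gives $\rho(\arc{a})=\tau(\gamma(\arc{a}))$ for every arc of a non-loop edge. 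The function $\rho$ is obtained from $\gamma'$ by a single switching, as required.

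I expect no step to be a real obstacle. The only point needing care is this final bookkeeping: a composition of switchings is a switching, and transporting $\eta$ through $\tau$ is legitimate because $\tau$ is an automorphism and the group is commutative. Loops impose no condition, since the statement concerns only non-loop edges.
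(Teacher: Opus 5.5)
Your proposal is correct and follows the paper's proof: switch both gainings so they are trivial on a common spanning tree, apply \cref{prop:equal-on-spanning-tree} to obtain $\tau$, and then transfer the switching $\eta$ through $\tau$ onto $\gamma'$. The paper does the last step as a second switching by $\mu(v)=\tau(\eta(v)^{-1})$ applied to $(\gamma')^{\nu}$, which is the same as your single combined switching. Taking $\tau$ to be the identity in case~(i) makes explicit a point the paper leaves implicit, and your final computation does not in fact need commutativity.
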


\begin{proof}
Let $T$ be a spanning tree of $\Omega$.
Let $\eta\colon V(\Omega)\to \ZZ_{3}$ and $\nu\colon V(\Omega)\to \ZZ_{3}$ be switching functions such that the gainings $\gamma^{\eta}$ and $(\gamma')^{\nu}$ both send every edge in $T$ to $1$.
By \cref{prop:equal-on-spanning-tree}, there is an automorphism $\tau$ such that $(\gamma')^{\nu}(\arc{a})=\tau(\gamma^{\eta}(\arc{a}))$ whenever \arc{a} is an arc of a non-loop edge.
Now let $\mu\colon V(\Omega)\to\ZZ_{3}$ be the function taking each $v\in V(\Omega)$ to $\tau(\eta(v)^{-1})$.
We let $\rho$ be $((\gamma')^{\nu})^{\mu}$.
Whenever $\arc{a}=(e,u,v)$ is an arc of a non-loop edge we have
\begin{align*}
\rho(\arc{a})&=((\gamma')^{\nu})^{\mu}(\arc{a})\\
&=\mu(u)^{-1}(\gamma')^{\nu}(\arc{a})\mu(v)\\
&=\tau(\eta(u))\tau(\gamma^{\eta}(\arc{a}))\tau(\eta(v)^{-1})\\
&=\tau(\eta(u))\tau(\eta(u)^{-1}\gamma(\arc{a})\eta(v))\tau(\eta(v)^{-1})\\
&=\tau(\eta(u)\eta(u)^{-1})\tau(\gamma(\arc{a}))\tau(\eta(v)\eta(v)^{-1})\\
&=\tau(\gamma(\arc{a})).\qedhere
\end{align*}
\end{proof}

Our next lemma mirrors the strategy used by Gerards in a short proof characterising the excluded minors for regular matroids~\cite{MR0986875}.
A similar strategy appears in the proof characterising the excluded minors for $\mathbb{F}_{4}$\dash representable matroids~\cite{MR1769191}.
That proof involves building an $\mathbb{F}_{4}$\dash representable ``twin" of the excluded minor.
The next lemma builds a biased graph that acts as a $\ZZ_{3}$\dash gainable twin of an excluded minor.

\begin{lemma}
\label{lem:twinlemma}
Let $e$ and $f$ be distinct non-loop edges in the biased graph $\Omega=(G,\cB)$ such that:
\begin{enumerate}[label = \textup{(\roman*)}]
\item $\Omega\backslash e$ and $\Omega\backslash f$ are both $\ZZ_{3}$\dash gainable,
\item $\Omega\backslash \{e,f\}$ is connected up to isolated vertices and has no proper $1$\dash separation,
\item $\Omega\backslash \{e,f\}$ contains a non-loop unbalanced cycle,
\item if $e$ is pendant in $\Omega\backslash f$, then it is pendant in $\Omega$, and
\item if $f$ is pendant in $\Omega\backslash e$, then it is pendant in $\Omega$.
\end{enumerate}
There exists a unique set of cycles $\cB_{\textup{twin}}$ satisfying the theta property such that $\Omega_{\textup{twin}}=(G,\cB_{\textup{twin}})$ is $\ZZ_{3}$\dash gainable and
\begin{equation}
\label{eqn1}
\Omega_{\textup{twin}}\backslash e = \Omega\backslash e
\qquad\text{and}\qquad
\Omega_{\textup{twin}}\backslash f = \Omega\backslash f.
\end{equation}
\end{lemma}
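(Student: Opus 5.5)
Uniqueness is the easy half, so I would dispose of it first. Any cycle of $G$ avoiding $e$ or avoiding $f$ has its bias fixed by \eqref{eqn1}. So two candidate twins $\cB_1,\cB_2$ can differ only on cycles containing both $e$ and $f$. Suppose both are $\ZZ_3$\dash gainable, with gainings $\gamma_1,\gamma_2$ of $(G,\cB_1)$ and $(G,\cB_2)$. Restricted to $G\backslash\{e,f\}$, both are gainings of $\Omega\backslash\{e,f\}$. That biased graph is connected up to isolated vertices and has no proper $1$\dash separation, by (ii). Fix a spanning tree $T$ of $G\backslash\{e,f\}$; the pendant hypotheses (iv) and (v) are what keep $G\backslash\{e,f\}$ spanning except in the pendant case, which is treated separately and trivially. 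Switch both gainings to be $1$ on $T$. By \cref{prop:equal-on-spanning-tree}(ii), after applying an automorphism of $\ZZ_3$, $\gamma_1$ and $\gamma_2$ agree on all non-loop edges of $G\backslash\{e,f\}$. By (iii) there is an unbalanced non-loop cycle there, so $S$ is non-empty and the automorphism is forced.

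Next I would show $\gamma_1$ and $\gamma_2$ agree on $e$. If $e$ is not pendant in $G\backslash f$, its ends are joined by a path $P$ in $T$, and the bias of $P\cup e$ is fixed by $\Omega\backslash f$. In $\ZZ_3$, the bias of $P\cup e$ pins down $\gamma(e)$ only up to the two non-identity values. To resolve this, I would apply \cref{prop:equal-on-spanning-tree}(ii) to $\Omega\backslash f$ itself, whose restriction to $G\backslash\{e,f\}$ already fixes the automorphism. This shows $\gamma_i|_{G\backslash f}$ is determined, and symmetrically for $f$. One point needs care: $\Omega\backslash f$ must have no proper $1$\dash separation, and this follows from (ii) together with (iv). Hence $\gamma_1=\gamma_2$ on all non-loop edges. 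Loops are either deleted or have fixed bias, since every cycle through a loop is that loop. Therefore $\cB_1=\cB_2$.

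For existence, take a gaining $\gamma_e$ of $\Omega\backslash e$ and $\gamma_f$ of $\Omega\backslash f$, both switched to be $1$ on $T$. Apply the argument above to their common restriction $\Omega\backslash\{e,f\}$: composing $\gamma_f$ with an automorphism, they agree on $G\backslash\{e,f\}$. Define $\gamma$ on $G$ by $\gamma_e$ off $e$ and $\gamma_f$ on $e$. Set $\cB_{\mathrm{twin}}=\cB_\gamma$; this satisfies the theta property and is $\ZZ_3$\dash gainable by construction. Moreover $\gamma|_{G\backslash e}=\gamma_e$ and $\gamma|_{G\backslash f}=\gamma_f$ up to an automorphism, and automorphisms preserve bias, so \eqref{eqn1} holds.

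The main obstacle is the connectivity bookkeeping. \cref{prop:equal-on-spanning-tree}(ii) needs "no proper $1$\dash separation", and this must be transferred from $\Omega\backslash\{e,f\}$ to $\Omega\backslash e$ and $\Omega\backslash f$. One must also handle the cases where $e$ or $f$ is pendant or joins an isolated vertex of $\Omega\backslash\{e,f\}$. A pendant edge lies in no cycle, so its gain is irrelevant and can be set to $1$. Conditions (iv) and (v) rule out the bad configuration in which $e$ is pendant in $\Omega\backslash f$ but lies on a cycle through $f$ in $\Omega$; that configuration would leave the bias of such cycles undetermined.
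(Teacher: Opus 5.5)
Your proof is correct. The existence half is the paper's construction: switch $\gamma_e,\gamma_f$ to be $1$ on a spanning tree of the non-trivial component of $\Omega\backslash\{e,f\}$, align them by an automorphism using \cref{prop:equal-on-spanning-tree}, and splice.

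The uniqueness half pins down the gain on $e$ differently.
\begin{itemize}
\item[] \textbf{The paper} stays inside $\Omega\backslash\{e,f\}$. It uses the rounded ordering (\cref{prop:rounded}) to find two paths between the ends of $e$: one using only identity-gain edges, which decides whether $e$ gets $1$, and one using exactly one non-identity edge. It then applies \cref{prop:two-edges-in-cycle} to the cycle this second path forms with $e$.
\item[] \textbf{You} apply \cref{prop:equal-on-spanning-tree}(ii) a second time, to $\Omega\backslash f$. The resulting automorphism must be trivial, because the two gainings already agree on a non-identity arc of $\Omega\backslash\{e,f\}$, which exists by (iii).
\end{itemize}
Your route is shorter and reuses the proposition as a black box instead of re-running its inner argument. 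The cost is a connectivity check that the paper never needs: $\Omega\backslash f$, minus isolated vertices, must be connected with no proper $1$\dash separation.

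You attribute this to ``(ii) together with (iv)''. More precisely, (ii) alone suffices in the case that matters. If $e$ is not pendant in $\Omega\backslash f$, both ends of $e$ lie in the non-trivial component $H$ of $\Omega\backslash\{e,f\}$. Take a proper $1$\dash separation $(X,Y)$ of $H+e$ with $e\in X$. If $E(X)=\{e\}$, it is not proper, since $V(H)\subseteq V(Y)$. Otherwise deleting $e$ from $X$ gives a proper $1$\dash separation of $H$, contradicting (ii). Condition (iv) is used only to dispose of the pendant case, where $e$ lies on no cycle of $\Omega$ and uniqueness is vacuous.

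One small imprecision in existence: \cref{prop:equal-on-spanning-tree} aligns $\gamma_e$ and $\gamma_f$ only on non-loop edges. So $\gamma|_{G\backslash f}$ is an automorphic image of $\gamma_f$ only off loops. \eqref{eqn1} still holds, because each loop has the same bias in $\Omega\backslash e$ and $\Omega\backslash f$. The paper sidesteps this by setting loop gains directly from $\cB$.
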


\begin{proof}
There is a connected component of $\Omega\backslash \{e,f\}$ that contains a non-loop unbalanced cycle.
(Every other component is an isolated vertex.)
Let $T'$ be a spanning tree of this connected component, and let $T$ be a maximal forest of $\Omega$ such that $T'$ is a subtree of $T$.
Thus any edge of $T$ that is not in $T'$ is either $e$ or $f$.
Let $\gamma_{e}$ be a $\ZZ_{3}$\dash gaining of $\Omega\backslash e$ and let $\gamma_{f}$ be a  $\ZZ_{3}$\dash gaining of $\Omega\backslash f$.
By applying a switching, we can assume that $\gamma_{e}$ sends every edge in $T\backslash e$ to $1$ and that $\gamma_{f}$ sends every edge in $T\backslash f$ to $1$.
Therefore $\gamma_{e}$ and $\gamma_{f}$ both send the edges of $T'$ to $1$.
\cref{prop:equal-on-spanning-tree} tells us that after applying an automorphism of $\ZZ_{3}$ if necessary, we can assume $\gamma_{e}(\arc{a})=\gamma_{f}(\arc{a})$ whenever \arc{a} is an arc of a non-loop edge of $\Omega\backslash \{e,f\}$.

Now we construct $\gamma$, a $\ZZ_{3}$\dash gaining of $G$.
Let \arc{a} be an arc of $G$.
If \arc{a} is the arc of a loop, then we set $\gamma(\arc{a})$ to be $1$ if that loop is in $\cB$, and set it to be $\omega$ otherwise.
Now we assume that \arc{a} is an arc of a non-loop edge.
If \arc{a} is an arc of an edge in $G\del \{e,f\}$, then $\gamma(\arc{a})$ is defined to be $\gamma_{e}(\arc{a}) = \gamma_{f}(\arc{a})$.
On the other hand, if \arc{a} is an arc of $e$, then we set 
$\gamma(\arc{a})$ to be $\gamma_{f}(\arc{a})$ and if \arc{a} is an arc of $f$, we set $\gamma(\arc{a})$ to be $\gamma_{e}(\arc{a})$.
This completes the construction of $\gamma$.
We let $\cB_{\textup{twin}}$ be $\cB_{\gamma}$.
It is clear that~\eqref{eqn1} holds.

Next we must prove uniqueness.
Let $\cB'$ be a set of cycles satisfying the theta property such that $\Omega'=(G,\cB')$ is $\ZZ_{3}$\dash gainable and~\eqref{eqn1} holds.
A loop edge belongs to $\cB$ if and only if it belongs to $\cB_{\textup{twin}}$, by~\eqref{eqn1}.
Let $\gamma'$ be a $\ZZ_{3}$\dash gaining of $\Omega'$.
By switching we can assume that $\gamma'$ sends every edge in $T$ to $1$.
If we restrict $\gamma$ and $\gamma'$ to the edges of $\Omega\backslash \{e,f\}$, we obtain two gainings of this graph that send every edge in the spanning tree $T'$ to $1$.
\cref{prop:equal-on-spanning-tree} implies that by applying an automorphism of $\ZZ_{3}$ as necessary, we can assume that $\gamma(\arc{a})=\gamma'(\arc{a})$ whenever \arc{a} is an arc of a non-loop edge in $\Omega\backslash \{e,f\}$.
We will be done if we can show that $\gamma(\arc{a}) = \gamma'(\arc{a})$ whenever \arc{a} is an arc of a non-loop edge in $\Omega$, and this is true if \arc{a} is an arc of a non-loop edge other than $e$ or $f$.
So we will assume that \arc{a} is an arc of $e$ (the case when it is an arc of $f$ is identical).

Since $\gamma$ and $\gamma'$ send the edges of $T'$ to $1$, \cref{prop:agree-on-tree} implies that these gainings send exactly the same edges of $\Omega\backslash \{e,f\}$ to $1$.
Let $F$ be the set of edges of $\Omega\backslash \{e,f\}$ that are taken to $1$ by $\gamma$ and $\gamma'$.
Let $S$ be the set of non-loop edges in $\Omega\backslash \{e,f\}$ that are not in $F$.
We have assumed that $\Omega\backslash \{e,f\}$ contains a non-loop unbalanced cycle.
This means that there must be at least one edge in $S$, so condition (iii) in \cref{def:rounded} applies.
\cref{prop:rounded} implies that there is a rounded ordering of $S$.
Consequently, between any pair of (non-isolated) vertices in $\Omega\backslash \{e,f\}$ there is a path of $\Omega\backslash \{e,f\}$ that uses only edges in $F$, along with a path that uses exactly one edge in $S$.

Recall that \arc{a} is an arc of $e$.
We first assume that $e$ joins two vertices of $T'$.
Let $P$ be a path of $\Omega\backslash\{e,f\}$ between the vertices of $e$ such that $P$ uses exactly one edge in $S$ and otherwise uses edges in $F$.
Let $P'$ be a path of $\Omega\backslash \{e,f\}$ between the same pair of vertices where $P'$ uses only edges in $F$.
Assume that $\gamma$ takes $e$ to $1$.
Then the union of $e$ with $P'$ is a balanced cycle of $\Omega\backslash f = \Omega'\backslash f$.
It follows that $\gamma'$ also sends $e$ to $1$.
The same argument shows that if $\gamma'$ sends $e$ to $1$, then $\gamma$ does also.
Therefore we may as well assume that neither $\gamma$ nor $\gamma'$ sends $e$ to $1$.
The union of $P$ with $e$ is a cycle of $\Omega\backslash f=\Omega'\backslash f$, and it contains exactly two edges that are not sent to $1$ by $\gamma$ or $\gamma'$.
Now it follows from \cref{prop:two-edges-in-cycle} that $\gamma(\arc{a})=\gamma'(\arc{a})$.
So if $e$ joins two vertices of $T'$, then $\gamma(\arc{a})=\gamma'(\arc{a})$ in any case.

Next assume that $e$ does not join two vertices of $T'$.
We let $w$ be a vertex incident with $e$ such that $w$ is not in $T'$.
Note that no edge of $\Omega\backslash \{e,f\}$ is incident with $w$, so $e$ is a pendant edge in $\Omega\backslash f$.
But then the hypotheses imply that $e$ is a pendant edge in $\Omega$, so $e$ is the only edge of $\Omega$ that is incident with $w$.
As $T$ is a maximal forest of $\Omega$, it follows that $e$ is in $T$.
Since $\gamma_{f}$ sends edges of $T\backslash f$ to $1$, we see that both $\gamma$ and $\gamma'$ send $e$ to $1$.
This concludes the argument that $\gamma(\arc{a})=\gamma'(\arc{a})$ whenever \arc{a} is an arc of $e$.
Exactly the same argument applies when \arc{a} is an arc of $f$.
Therefore $\cB'=\cB_{\gamma'}=\cB_{\gamma}=\cB_{\textup{twin}}$ holds and the proof is complete.
\end{proof}

\section{The case with a balancing pair of edges}

Let $e$ be a non-loop edge in the biased graph $\Omega$.
Recall that if every unbalanced cycle of $\Omega$ contains $e$, then $e$ is a balancing edge.
Now let $e$ and $f$ be distinct non-loop edges.
If every unbalanced cycle contains either $e$ or $f$, then $\{e,f\}$ is a \emph{balancing pair} of edges.
Assume that $\{e,f\}$ is a balancing pair and that $f$ is not a balancing edge.
Then there is an unbalanced cycle that contains $e$ but not $f$.
Since $e$ is a balancing edge of $\Omega\del f$, we can use balanced rerouting to show that any cycle containing $e$ but not $f$ is unbalanced.
A symmetrical statement applies if $e$ is not a balancing edge.

Let $\{e,f\}$ be a balancing pair of edges in the biased graph $\Omega$.
Let the end-vertices of $e$ be $u_{e}$ and $v_{e}$ and let the end-vertices of $f$ be $u_{f}$ and $v_{f}$.
Let $C$ and $D$ be two cycles that contain $e$ and $f$.
Assume that there are simple closed walks $W_{C}$ and $W_{D}$, contained in $C$ and $D$ respectively, such that $W_{C}$ contains $u_{e}, v_{e},  u_{f}, v_{f}$ in this order and $W_{D}$ contains the sequence $u_{e}, v_{e},  v_{f}, e_{f}$.
In this case we say that $C$ and $D$ have \emph{opposite} directions.
Otherwise $C$ and $D$ have the \emph{same} direction.
Let $C$ and $D$ be distinct cycles containing $e$ and $f$.
We say that $C, D$ is an \emph{inconsistent pair} of cycles if either:
\begin{enumerate}[label = $\bullet$]
\item $C$ and $D$ have the same direction but opposite bias, or,
\item $C$ and $D$ have opposite directions but the same bias.
\end{enumerate}

\begin{lemma}\label{inconsistent-cycles}
Let $\Omega$ be a biased graph containing an unbalanced cycle.
Let $\{e,f\}$ be a balancing pair of edges in  
$\Omega$, where neither $e$ nor $f$ is a balancing edge.
Then $\Omega$ is $\ZZ_{3}$\dash gainable if and only if it does not contain a pair of inconsistent cycles.
\end{lemma}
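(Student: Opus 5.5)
The plan is to reduce everything to the gains of $e$ and $f$ after switching so that all other edges receive $1$. Since $\{e,f\}$ is a balancing pair, $\Omega\backslash\{e,f\}$ is balanced. First I reduce to the case where $\Omega$ is connected and $e,f$ lie in a common block. If not, no cycle contains both $e$ and $f$, so there are no inconsistent pairs. In that case $e$ is a balancing edge of its block and $f$ of its block, and combining the gainings of \cref{prop:balancing-edge} block by block (as in the proof of \cref{exminors2conn}) gives a $\ZZ_{3}$\dash gaining.

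So assume there is a cycle through both $e$ and $f$. Each component of $\Omega\backslash\{e,f\}$ (restricted to non-loop edges) is balanced, and loops there are balanced as well. Choose a spanning forest $T$ of $\Omega\backslash\{e,f\}$ and extend it to a spanning tree of $\Omega$ if necessary. Any gaining can be switched so that $T$ receives $1$. By balance of $\Omega\backslash\{e,f\}$ together with \cref{prop:equal-on-spanning-tree}(i) applied componentwise, every edge other than $e,f$ then receives $1$.

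\emph{Necessity.} Fix arcs $\arc{a}_e=(e,u_e,v_e)$ and $\arc{a}_f=(f,u_f,v_f)$, with gains $x$ and $y$. A cycle through $e$ and $f$ traversed as $u_e,v_e,u_f,v_f$ has gain conjugate to $xy$. One traversed as $u_e,v_e,v_f,u_f$ has gain $xy^{-1}$. Because $\ZZ_3$ is abelian, the bias of a cycle containing both edges therefore depends only on its direction. Moreover, since neither edge is balancing, there is an unbalanced cycle containing $e$ but not $f$, so $x\neq 1$; symmetrically $y\neq 1$. In $\ZZ_3$ with $x,y\ne 1$, exactly one of $xy$ and $xy^{-1}$ is $1$. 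Hence cycles of the same direction have the same bias, and cycles of opposite directions have opposite bias, so no inconsistent pair exists.

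\emph{Sufficiency.} Assume there are no inconsistent pairs. If some cycle through both $e$ and $f$ is balanced in the $u_e,v_e,u_f,v_f$ direction (or the same holds for every cycle of that direction), set $\gamma(\arc{a}_e)=\omega$ and $\gamma(\arc{a}_f)=\omega^{2}$. Otherwise set both to $\omega$. All other edges receive $1$, and loops receive $1$ or $\omega$ according to their bias. I then check every cycle $C$.
\begin{itemize}
\item If $C$ avoids both $e$ and $f$, it is balanced in $\Omega$ and has gain $1$.
\item If $C$ contains exactly one of $e,f$, it is unbalanced by the remark preceding the lemma, and its gain is $\omega^{\pm1}\ne 1$.
\item If $C$ contains both, consistency says its bias is determined by its direction. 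The choice of $x,y$ matches this: a cycle is balanced exactly in the direction where $xy^{\pm1}=1$.
\end{itemize}
If one direction has no cycles at all, the bias of the other direction alone dictates the choice, so either assignment consistent with it works.

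The main obstacle is the bookkeeping in the no-cycle and degenerate cases. This includes the case where the endpoints of $e$ and $f$ coincide. There, "direction" must be read off the simple closed walk, and I need to make sure the gain of the walk is still $xy$ or $xy^{-1}$ as claimed. It also includes verifying that the switching argument really forces all edges of $\Omega\backslash\{e,f\}$ to $1$ when that graph is disconnected.
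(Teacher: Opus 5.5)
Your proposal is correct and follows essentially the same route as the paper: switch so that every edge other than $e$ and $f$ has gain $1$, use that both remaining gains are non-identity in $\ZZ_3$ to see that bias is determined by direction, and conversely assign $(\omega,\omega^{2})$ or $(\omega,\omega)$ to the arcs of $e$ and $f$ according to the bias of the cycles through both edges. The differences are cosmetic. When no cycle contains both edges, the paper simply gives $e$ and $f$ gain $\omega$, so your block-by-block reduction is unnecessary. The paper also avoids your vacuous-direction bookkeeping by fixing one cycle $C$ through $e$ and $f$ and orienting $f$ relative to $C$.
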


\begin{proof}
Assume that the end-vertices of $e$ (respectively $f$) are $u_{e}$ and $v_{e}$ (respectively $u_{f}$ and $v_{f})$.
Let $\arc{a}_{e}$ be $(e,u_{e},v_{e})$ and let $\arc{a}_{f}$ be $(f,u_{f},v_{f})$.
Suppose $\gamma$ is a $\ZZ_3$\dash gaining of $\Omega$.
Since $\{e,f\}$ is a balancing pair of edges, by switching we can assume that $\gamma$ takes every edge other than $e$ and $f$ to $1$.
As neither $e$ nor $f$ is a balancing edge, $\gamma$ takes neither $e$ nor $f$ to $1$.
Assume for a contradiction that $\Omega$ contains a pair of inconsistent cycles, $C$ and $D$.
First assume that $C$ and $D$ have the same direction.
Without loss of generality we can assume that $W_{C}$ and $W_{D}$ are simple closed walks contained in $C$ and $D$ respectively such that $\arc{a}_{e}$ and $\arc{a}_{f}$ are arcs of both $W_{C}$ and $W_{D}$ and that $\arc{a}_{e}$ appears in these walks before $\arc{a}_{f}$.
Then
\[
\gamma(W_{C}) = \gamma(\arc{a}_{e})\gamma(\arc{a}_{f}) = \gamma(W_{D})
\]
and therefore both $C$ and $D$ are balanced or both are unbalanced.
This contradicts the definition of an inconsistent pair, so $C$ and $D$ have opposite directions.
We can assume that $W_{C}$ and $W_{D}$ are simple closed walks contained in $C$ and $D$ such that $W_{C}$ contains $\arc{a}_{e}$ and $\arc{a}_{f}$ in this order, while $W_{D}$ contains $\arc{a}_{e}$ and $\arc{a}_{f}^{-1}$ in this order.
Then $\gamma(W_C) = \gamma(\arc{a}_{e})\gamma(\arc{a}_{f})$ and
\[
\gamma(W_{D}) = \gamma(\arc{a}_{e})\gamma(\arc{a}_{f}^{-1}) = \gamma(\arc{a}_{e})\gamma(\arc{a}_{f})^{-1}.
\]
Since $\gamma(\arc{a}_{e})$ and $\gamma(\arc{a}_{f})$ are non-identity elements of $\ZZ_{3}$, it is impossible for $\gamma(\arc{a}_{e})\gamma(\arc{a}_{f})$ and $\gamma(\arc{a}_{e})\gamma(\arc{a}_{f})^{-1}$ to be simultaneously equal to $1$, or not equal to $1$.
Therefore $C$ and $D$ have opposite bias and we have a contradiction.
This shows that if $\Omega$ is $\ZZ_{3}$\dash gainable, there is no pair of inconsistent cycles.

Conversely, suppose $\Omega$ does not contain a pair of inconsistent cycles.
Because neither $e$ nor $f$ is a balancing edge, there is an unbalanced cycle containing $e$ but not $f$, and another containing $f$ but not $e$.
We noted earlier that any cycle containing $e$ but not $f$, or $f$ but not $e$, must be unbalanced.
Assume there is no cycle containing both $e$ and $f$.
In this case we can define the gaining $\gamma$ so that $\gamma(\arc{a}_{e})=\gamma(\arc{a}_{f})=\omega$, and $\gamma$ takes any other edge to $1$.
It is easy to see that $\gamma$ is a $\ZZ_{3}$\dash gaining of $\Omega$.
Therefore we consider the case that there is at least one cycle, $C$, containing both $e$ and $f$.
We can assume that any simple closed walk in $C$ contains both $\arc{a}_{e}$ and $\arc{a}_{f}$ or both $\arc{a}_{e}^{-1}$ and $\arc{a}_{f}^{-1}$.
Let $D$ be an arbitrary cycle containing $e$ and $f$.
Because there is no pair of inconsistent cycles, if $C$ and $D$ have the same direction then they have same bias, and if they have opposite directions, then they have opposite bias.
Assume $C$ is balanced.
We define the gaining $\gamma$ so that $\gamma(\arc{a}_{e})=\omega$ and $\gamma(\arc{a}_{f})=\omega^{2}$.
On the other hand, if $C$ is unbalanced we define $\gamma$ so that $\gamma(\arc{a}_{e})=\gamma(\arc{a}_{f})=\omega$.
In either case, define $\gamma$ so that it takes any other edge to $1$.
Now it is not hard to see that $\gamma$ is a $\ZZ_{3}$\dash gaining of $\Omega$.
\end{proof}

Assume $\{e,f\}$ is a balancing pair of edges of the biased graph $\Omega$, and that $e$ and $f$ are incident with a common vertex.
Then all cycles that contain both $e$ and $f$ have the same direction, meaning that no inconsistent pair of cycles can exist.

\begin{lemma}\label{balancingpair}
Let $\Omega$ be an excluded minor for the class of $\ZZ_3$\dash gainable graphs.  
If $\Omega$ has a balancing pair of edges, then $\Omega$ is isomorphic to $-K_{4}$.
\end{lemma}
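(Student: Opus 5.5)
Let $\{e,f\}$ be a balancing pair of the excluded minor $\Omega$. The plan is to use \cref{inconsistent-cycles} to obtain an inconsistent pair of cycles, and then use minimality to force the underlying graph to be $K_4$ with all odd cycles unbalanced.

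First I reduce to the setting of \cref{inconsistent-cycles}. By \cref{prop:balancing-edge}, neither $e$ nor $f$ is a balancing edge, since otherwise $\Omega$ would be $\ZZ_3$\dash gainable. In particular $\Omega$ has an unbalanced cycle, so \cref{inconsistent-cycles} provides an inconsistent pair of cycles $C,D$, each containing both $e$ and $f$. By the remark after that lemma, $e$ and $f$ share no end-vertex. By \cref{exminors2conn}, $\Omega$ is biconnected, has minimum degree at least three, and has no balanced $2$\dash cycle.

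Next I use minimality. Being an inconsistent pair is determined by $H=C\cup D$ together with the biases of $C$ and $D$. I check that the pair survives the passage to a minor: delete every edge outside $H$, and contract edges of $C\cap D$ other than $e,f$ whenever such a contraction is legal. Contraction keeps $\{e,f\}$ a balancing pair, because contracting an edge other than $e,f$ only shortens cycles, and any resulting unbalanced cycle lifts to an unbalanced cycle of $\Omega$, which meets $e$ or $f$. Contraction also preserves directions and biases of $C$ and $D$. I must also confirm that $e$ and $f$ remain non-balancing in the minor; if one of them became balancing, \cref{prop:balancing-edge} would make the minor gainable. I expect to be able to choose $C,D$ so that this does not happen, but I should verify it. Granting this, \cref{inconsistent-cycles} applied to the proper minor shows it is not $\ZZ_3$\dash gainable, contradicting minimality. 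Hence $\Omega=C\cup D$ up to such reductions.

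So $\Omega$ is the union of two cycles through the disjoint edges $e$ and $f$, with minimum degree at least three. In a union of two cycles, vertices of degree three or more are exactly the ends of the segments where $C$ and $D$ diverge. Every vertex needs degree at least three, and every segment with an interior vertex would create a degree-two vertex. Consequently every segment is a single edge. The four end-vertices of $e$ and $f$ must then be the only vertices, and the edges are $e$, $f$ and the four "crossing" edges. The sub-case where $C$ and $D$ have the same direction yields only a parallel/$2$-separation structure. That structure is excluded by \cref{exminors2conn} or by the degree condition, or else it produces a balanced $2$\dash cycle. The opposite-direction case gives $K_4$ with $e$ and $f$ a perfect matching.

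Finally I read off the biases. In $K_4$, every cycle avoiding both $e$ and $f$ is balanced; these are the $4$\dash cycles through the other four edges and some triangles. Triangles containing exactly one of $e,f$ are unbalanced. The two $4$\dash cycles containing $e$ and $f$ have opposite directions and, being inconsistent, the same bias. The remaining step is to show that this common bias is balanced, so that $\Omega=-K_4$. If instead both were unbalanced, I would show that $\Omega$ is $\ZZ_3$\dash gainable, using an explicit gaining with $\omega$ on $e$ and $f$ in suitable directions, or that it contains a smaller obstruction; either outcome is a contradiction.

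The main obstacle is the structural reduction to $K_4$. The key step there is confirming that the contractions are legal and preserve inconsistency without turning $e$ or $f$ into a balancing edge.
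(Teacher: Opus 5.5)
Your proposal has a genuine gap, and it is not the one you flagged. Grant everything you want from the deletions and contractions: $\Omega=C\cup D$, $C\cap D=\{e,f\}$, and minimum degree at least three. The degree condition then only tells you that $C$ and $D$ are both Hamiltonian. Your step ``every segment is a single edge, so the four end-vertices of $e$ and $f$ are the only vertices'' does not follow, because two Hamiltonian cycles can cross at arbitrarily many vertices without sharing an edge.

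For example, on $\{1,\dots,6\}$ take $C=1,2,3,4,5,6$ and $D=1,2,6,4,5,3$, with $e=12$ and $f=45$. Then $C\cap D=\{e,f\}$, there are no parallel edges, every vertex has degree $3$ or $4$, and $C$ and $D$ have the same direction. So you cannot dismiss the same-direction case as a ``parallel/$2$-separation structure'' at this stage either. That dismissal only works once you know there are four vertices, where the same direction forces a balanced $2$-cycle.

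What is missing is a way to use the biases, via balanced rerouting, to build new inconsistent pairs that share an edge other than $e,f$. Your contraction argument forbids this for every inconsistent pair, not just the chosen one. The paper does it in two steps.

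\emph{Edges of $D$ cross.} Suppose an edge $d$ of $D$ had both ends on one of the two $e,f$-free subpaths $P$, $Q$ of $C$. The cycle in $P\cup d$ avoids $e$ and $f$, so it is balanced. Rerouting $C$ along $d$ gives a cycle with the same bias and direction as $C$; it is still inconsistent with $D$ but shares $d$ with it. Hence every edge of $D$ joins $P$ to $Q$.

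\emph{The cycle $D'$.} Let $d_u$ and $d_v$ be the $D$-neighbours of $u_e$ and $v_e$. Let $D'$ consist of $e$, $f$, the edges $u_ed_u$ and $v_ed_v$, and the subpaths of $C$ from $d_u$ to $u_f$ and from $d_v$ to $v_f$. With $C$ forward, $D'$ is backward, so $D'$ is inconsistent with $C$ or with $D$. This first gives an inconsistent pair of opposite directions. Then, since $D'$ always shares $u_ed_u$ and $v_ed_v$ with $D$, it must be inconsistent with $C$. That forces $d_u=u_f$ and $d_v=v_f$, so $D$ is a $4$-cycle and $|V(\Omega)|=4$.

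Two smaller points.

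\emph{Non-balancing of $e,f$.} This needs no clever choice of $C,D$. After contractions it follows from \cref{prop:unbalanced-contract}. If deleting the edges outside $C\cup D$ made $e$ balancing, then $\{e,f\}$ would be a $2$-edge cut of $C\cup D$ with balanced sides. All cycles through $e$ and $f$ would then have one direction and, by rerouting, one bias, contradicting inconsistency.

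\emph{Final step.} No $\ZZ_3$-gaining can exist when both $4$-cycles through $e,f$ are unbalanced: they have opposite directions and the same bias, so they form an inconsistent pair, and \cref{inconsistent-cycles} rules out a gaining. The contradiction instead comes from contracting $e$ and $f$, which yields $(4K_2,\emptyset)$.
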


\begin{proof}
Certainly $\Omega$ has an unbalanced cycle, or else it is gainable over every group.
Because $\Omega$ is not $\ZZ_{3}$\dash gainable, \cref{prop:balancing-edge} implies that it has no balancing edge.
\cref{exminors2conn} implies that $\Omega$ is biconnected (and therefore loopless), and has no $2$\dash separation with a balanced side that contains two edges.
Furthermore, $\Omega$ has minimum degree at least three.

Let $\{e, f\}$ be a balancing pair of edges in $\Omega$.
Let $u_{e}$ and $v_{e}$ be the end-vertices of $e$ and let $u_{f}$ and $v_{f}$ be the end-vertices of $f$.
Call a cycle consisting of $\{e,f\}$, a $v_{e}$\dash $u_{f}$ path, and a $v_{f}$\dash $u_{e}$ path a \emph{forward cycle}.
Forward cycles traverse the end-vertices of $e$ and $f$ in the cyclic order $u_{e},v_{e},u_{f},v_{f}$. 
Let us call a cycle that is the union of $\{e,f\}$, a $v_{e}$\dash $v_{f}$ path, and a $u_{f}$\dash $u_{e}$ path a \emph{backward cycle}.
Backward cycles traverse the end-vertices of $\{e,f\}$ in the cyclic order $u_{e},v_{e},v_{f},u_{f}$.
So a pair of cycles has opposite directions if and only if one is a forward cycle and the other is backward.

Since $\Omega$ is not $\ZZ_{3}$\dash gainable, \cref{inconsistent-cycles} implies that it contains a pair of inconsistent cycles.
Therefore $e$ and $f$ are not incident with a common vertex, by the paragraph preceding this lemma.

\begin{claim} \label{Whinnie-P}
Let $C$ and $D$ be a pair of inconsistent cycles.
Every edge of $\Omega$ is in either $C$ or $D$.
\end{claim}

\begin{proof}
Assume for a contradiction that $\Omega$ contains an edge $a$ not in $C \cup D$.
Then $\{e,f\}$ is a balancing pair in $\Omega\del a$ and $C$ and $D$ form an inconsistent pair of cycles in $\Omega\del a$.
If neither $e$ nor $f$ is a balancing edge in $\Omega \del a$, then \cref{inconsistent-cycles} implies that $\Omega\del a$ is not $\ZZ_3$\dash gainable, contradicting the minimality of $\Omega$.
Therefore we will assume without loss of generality that $e$ is a balancing edge of $\Omega\del a$, so that any unbalanced cycle of $\Omega\del a$ contains $e$.
Note that $f$ is a balancing edge of $\Omega\del e$, and therefore a cycle of $\Omega\del e$ is unbalanced if and only if it contains $f$.
But $a$ is also a balancing edge of $\Omega\del e$, since $\Omega\del \{a,e\}$ has no unbalanced cycles.
Therefore a cycle of $\Omega\del e$ is unbalanced if and only if it contains $a$.
This shows that $a$ and $f$ are in exactly the same cycles in $\Omega\del e$.
Furthermore, $\Omega\del e$ has at least one unbalanced cycle, so neither $a$ nor $f$ is an isthmus of $\Omega\del e$.
From this discussion we see that $\{a,f\}$ is a minimal edge cut-set in $\Omega\del e$.

There must be exactly two connected components in $\Omega\del \{e,f,a\}$.
Let these components be $H_{1}$ and $H_{2}$.
Assume that $e$ joins two vertices of $H_{1}$.
Then no cycle of $H_{2}$ contains $e$ or $f$, and therefore any such cycle is balanced in $\Omega$.
If $H_{2}$ contains two edges, then $\Omega$ has a $2$\dash separation where one side is balanced and contains at least two edges.
If it contains a single edge, then this edge is a non-loop, so $\Omega$ contains a vertex of degree at most two.
If $H_{2}$ contains no edge at all, then once again $\Omega$ has a vertex of degree at most two.
In any case we have a contradiction.
A symmetrical argument shows that $e$ cannot join two vertices of $H_{2}$, so $e$ joins a vertex of $H_{1}$ to a vertex of $H_{2}$, and therefore $\{e,f,a\}$ is a minimal edge cut-set of $\Omega$.

By changing the labels on $u_{e}$, $v_{e}$, $u_{f}$, and $v_{f}$ as necessary, we can assume that $H_{1}$ contains $v_{e}$ and $u_{f}$ and that $H_{2}$ contains $u_{e}$ and $v_{f}$.
Now any backwards cycle has to contain the edge $a$ twice, and hence no such cycle exists.
So $C$ and $D$ are both forwards cycles, and because they form an inconsistent pair, they have opposite bias.
Let $P_{C}$ and $P_{D}$, respectively, be the subpaths of $C$ and $D$ from $v_{e}$ to $u_{f}$ that are contained in $H_{1}$.
Note that $(C-P_{C})\cup P_{D}$ is a cycle of $\Omega$ because $P_{D}$ is internally disjoint with $C$.
As $H_{1}$ is balanced, it follows that $(C-P_{C})\cup P_{D}$ is obtained from $C$ by balanced rerouting and therefore $(C-P_{C})\cup P_{D}$ has the same bias as $C$.
Let $Q_{C}$ and $Q_{D}$ be the subpaths of $C$ and $D$ from $v_{f}$ to $u_{e}$ that are contained in $H_{2}$.
Now we can obtain $D$ from $(C-P_{C})\cup P_{D}$ by replacing $Q_{C}$ with $Q_{D}$.
The same reasoning tells us that $D$ has the same bias as $(C-P_{C})\cup P_{D}$, and hence the same bias as $C$.
Therefore we have a contradiction.
\end{proof}

A \emph{chord} of a cycle is a non-loop edge that joins two vertices of the cycle without being contained in it.

\begin{claim}\label{Galinda}
Let $C$ and $D$ be a pair of inconsistent cycles and let $a$ be an edge not equal to $e$ or $f$.
Then $a$ is a chord of either $C$ or $D$.
\end{claim}

\begin{proof}
Note that \cref{Whinnie-P} implies that $a$ is an edge of either $C$ or $D$.
Assume that $a$ is a chord of neither $C$ nor $D$.
If $a$ is in $C$ then we let $C'$ be $C/a$, and otherwise we let $C'$ be $C$.
We define $D'$ in the same way.
Therefore $C'$ and $D'$ are both cycles in $\Omega/a$ that contain $e$ and $f$, and indeed $C'$ and $D'$ form an inconsistent pair of cycles in $\Omega/a$.

Now $\{e,f\}$ is a balancing pair of edges in $\Omega/a$.
If neither $e$ nor $f$ is a balancing edge in $\Omega/a$, then \cref{balancingpair} implies that $\Omega/a$ is not $\ZZ_{3}$\dash gainable and we have a contradiction.
Therefore we will assume without loss of generality that $e$ is a balancing edge in $\Omega/a$.
This means that $\Omega\del e/a$ is balanced.
But $\Omega\del e$ has an unbalanced cycle, and $a$ is not a loop, so we have contradicted \cref{prop:unbalanced-contract}.
\end{proof}

\begin{claim} \label{Simba}
Let $C$ and $D$ be a pair of inconsistent cycles.
Both $C$ and $D$ are Hamiltonian.
An edge that is in both $C$ and $D$ is equal to either $e$ or $f$.
\end{claim}

\begin{proof}
Suppose to the contrary, without loss of generality, that $z$ is a vertex not in $D$.
Let $a$ be an edge incident to $z$.
Since $a$ is not in $D$, it is in $C$ by \cref{Whinnie-P}.
So $a$ is not a chord of $C$, and it cannot be a chord of $D$ since $z$ is not in $D$.
Thus we have contradicted \cref{Galinda}.
Therefore both $C$ and $D$ are Hamiltonian.
Now assume $a$ is an edge in both $C$ and $D$, so that it is a chord of neither cycle.
\cref{Galinda} implies that $a$ is equal to $e$ or $f$.
\end{proof} 

\begin{claim}\label{Adams}
Let $C$ and $D$ be a pair of inconsistent cycles.
Let $P$ and $Q$ be the two maximal subpaths of $C$ that contain neither $e$ nor $f$.
Every edge in $D$ has one end-vertex in $P$ and the other in $Q$.
\end{claim}

\begin{proof}
Suppose not, and let $d$ be an edge in $D$ each of whose end-vertices are in $P$.
This means $d$ is not equal to either $e$ or $f$.
The cycle in $P \cup d$ is balanced, so via a balanced rerouting of $C$ along $d$ we obtain a cycle $C'$ containing $d$ with the same bias as $C$.
Furthermore $C$ and $C'$ have the same direction.
Therefore $C'$ and $D$ form an inconsistent pair of cycles that have the edge $d$ in common.
This contradicts \cref{Simba}.
\end{proof}

\begin{claim}\label{clm1}
There exists an inconsistent pair of cycles with opposite direction.
\end{claim}

\begin{proof}
We assume this fails.
Since $\Omega$ is not $\ZZ_{3}$\dash gainable, \cref{inconsistent-cycles} tells us that there is an inconsistent pair, $C$ and $D$.
By changing the labels of $u_{f}$ and $v_{f}$ as necessary, we can assume that $C$, and therefore $D$, is a forward cycle.
Note that $C$ and $D$ have opposite bias.

Let $C_{1}$ be the subpath of $C$ that joins $v_{e}$ to $u_{f}$ while avoiding $e$ and $f$, and let $C_{2}$ be the subpath of $C$ joining $v_{f}$ to $u_{e}$ that avoids $e$ and $f$.
Let $d_{u}$ and $d_{v}$ (respectively) be the vertices that are joined to $u_{e}$ (respectively $v_{e}$) by edges of $D$.
Now $u_{e}$ is in $C_{2}$, so \cref{Adams} implies $d_{u}$ is in $C_{1}$.
Similarly, $d_{v}$ is in $C_{2}$.
Let $D'$ be the cycle consisting of $e$, $f$, the edges of $D$ that join $u_{e}$ to $d_{u}$ and $v_{e}$ to $d_{v}$, along with the subpaths $d_{u}C_{1}u_{f}$ and $d_{v}C_{2}v_{f}$.
We observe that $D'$ is a backward cycle.

If $D'$ has the same bias as $C$ then $C$ and $D'$ form an inconsistent pair with opposite direction, contrary to assumption.
So $D'$ has the opposite bias to $C$, meaning that $D'$ has the same bias as $D$.
Therefore $D$ and $D'$ form an inconsistent pair with opposite directions, and we have a contradiction.
\end{proof}

At this point, we assume that $C$ and $D$ are an inconsistent pair of cycles with opposite directions.
As in the proof of \cref{clm1}, we can assume that $C$ is a forward cycle, so that $D$ is a backward cycle.
Note that $C$ and $D$ have the same bias.
We define $D'$ in exactly the same way as in the proof of \cref{clm1}, so that $D'$ is a backward cycle.
We claim that $C$ and $D'$ form an inconsistent pair.
This is true if $C$ and $D'$ have the same bias, so we assume that they have opposite bias.
Thus $D$ and $D'$ have the opposite bias and the same direction.
We see that $D$ and $D'$ form an inconsistent pair.
But they share two edges other than $e$ and $f$, so we have contradicted \cref{Simba}.
This contradiction shows that $C$ and $D'$ form an inconsistent pair.

If either $d_{u}C_{1}u_{f}$ or $d_{v}C_{2}v_{f}$ contains an edge, then $C$ and $D'$ have a common edge that is not $e$ or $f$, contradicting \cref{Simba}.
Therefore $d_{u}C_{1}u_{f}$ and $d_{v}C_{2}v_{f}$ are trivial paths, so $d_{u}=u_{f}$ and $d_{v}=v_{f}$.
This shows that $D$ contains only four vertices.
As $D$ is Hamiltonian by \cref{Simba}, we see that $\Omega$ contains exactly four vertices.
Every edge is of $\Omega$ is in either $C$ or $D$ by \cref{Whinnie-P}.
Now \cref{Simba} implies that $\Omega$ has exactly six edges.
No edge of $C$ is parallel to an edge of $D$, so $\Omega$ has no parallel pairs of edges and no loops, and therefore the underlying graph of $\Omega$ is $K_{4}$.
The edges $e$ and $f$ form a matching.

Any $3$\dash edge cycle of $\Omega$ contains exactly one of $e$ and $f$, and is therefore unbalanced.
Assume that $C$ is unbalanced.
Then $D$ too is unbalanced because they have the same bias.
By contracting $e$ and $f$ we obtain the biased graph $(4K_{2},\emptyset)$, which is an excluded minor for $\ZZ_{3}$\dash gainability by \cref{nK2-exminor}.
Thus $\Omega$ has a proper minor that is not $\ZZ_{3}$\dash gainable, and we have a contradiction.
We conclude that $C$, and therefore $D$, is balanced.

There is one more $4$\dash edge cycle of $\Omega$: the cycle consisting of all edges other than $e$ and $f$.
Denote this cycle by $A$ and assume that $A$ is unbalanced.
We will show that $\Omega$ is $\ZZ_{3}$\dash gainable.
Let $x$ be the edge joining $v_{e}$ to $u_{f}$ and let $y$ be the edge joining $v_{f}$ to $u_{e}$.
Let $\gamma$ be the gaining that sends every edge of $D$ to $1$, where $\gamma(x,v_{e},u_{f})=\omega$, and where $\gamma(y,v_{f},u_{e})=\omega^{2}$.
It is straightforward to check that this is indeed a $\ZZ_{3}$\dash gaining of $\Omega$, which is a contradiction.
Therefore $A$ is balanced.

We have shown that the underlying graph of $\Omega$ is $K_{4}$, that every $3$\dash edge cycle is unbalanced, and every $4$\dash edge cycle is balanced.
Thus $\Omega$ is $-K_{4}$, as desired.
\end{proof}

\section{Proof of the main theorem}

In this section we prove our main theorem.

\begin{theorem}
\label{thm:main-theorem}
The only excluded minors for the class of $\ZZ_{3}$\dash gainable biased graphs are $(4K_{2}, \emptyset)$, $\pm K_{3}$, and $-K_{4}$.
\end{theorem}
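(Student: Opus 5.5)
The three listed biased graphs are excluded minors by \cref{nK2-exminor} (with $|\ZZ_3|=3$, so $n=4$), \cref{pmK3-exminor} and \cref{-K4-exminor} (since $\ZZ_3$ has no element of order two). So it remains to show that any excluded minor $\Omega$ is one of them. By \cref{exminors2conn}, $\Omega$ is biconnected, loopless, has at least two vertices and minimum degree at least three, and has no $2$\dash separation with a balanced side of at least two edges. If $\Omega$ has a balancing pair, \cref{balancingpair} gives $\Omega\cong -K_4$. So we assume throughout that $\Omega$ has no balancing edge and no balancing pair. Then for any two edges $e,f$, the graph $\Omega\backslash\{e,f\}$ contains an unbalanced cycle, which is automatically non-loop.

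First dispose of the case $|V(\Omega)|=2$. Then $\Omega$ is a parallel class of at least three edges with no balanced $2$\dash cycle, hence $(nK_2,\emptyset)$ with $n\ge 3$. Since $(3K_2,\emptyset)$ is $\ZZ_3$\dash gainable and $(4K_2,\emptyset)$ is an excluded minor, $\Omega=(4K_2,\emptyset)$. Now let $|V(\Omega)|\ge 3$, so the underlying graph is $2$\dash connected with minimum degree three. Simplifying parallel classes does not affect $2$\dash connectivity, so \cref{helper} (applied to $G$, or after reducing parallel edges) supplies distinct edges $e,f$ with $\Omega\backslash e$, $\Omega\backslash f$ and $\Omega\backslash\{e,f\}$ all $2$\dash connected. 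Hypotheses (i)--(v) of \cref{lem:twinlemma} then hold: (i) by minimality, (ii) by $2$\dash connectivity, (iii) by the absence of balancing pairs, and (iv),(v) vacuously, since nothing is pendant. This gives the $\ZZ_3$\dash gainable twin $\Omega_{\textup{twin}}\neq\Omega$ agreeing with $\Omega$ off cycles through both $e$ and $f$.

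The main step, and the main obstacle, is to exploit the difference between $\Omega$ and its twin. Pick a cycle $C$ through both $e$ and $f$ whose bias differs in $\Omega$ and $\Omega_{\textup{twin}}$. I would then mimic Gerards' minimality argument. For any edge $a\notin\{e,f\}$ such that $\Omega\backslash a$ or $\Omega/a$ still satisfies the twin-lemma hypotheses and preserves $C$ (or its contraction), uniqueness in \cref{lem:twinlemma} forces the twin of the minor to be the minor of the twin. Since the minor of $\Omega$ is $\ZZ_3$\dash gainable, it must equal its own twin, which contradicts the bias discrepancy on $C$. Hence every edge $a$ is ``essential'': deleting or contracting it breaks $2$\dash connectivity or introduces a balancing pair $\{e,f\}$-type degeneracy.

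Using \cref{prop:delete-or-contract}, together with the fact that contraction never kills all unbalanced cycles (\cref{prop:unbalanced-contract}), this should force $\Omega$ to be very small. I would aim to show $C$ is Hamiltonian with few chords, much as in the Claims of \cref{balancingpair}, leading to at most three vertices. I would then check the remaining small cases by hand using the gain computations of \cref{pmK3-exminor}, for example the $\ZZ_3$\dash gainings of $\pm K_3\backslash b$. The goal is to identify $\pm K_3$ as the only survivor on three vertices, with four vertices reducing to $-K_4$, which is already handled. I expect the delicate part to be controlling which of deletion or contraction preserves all twin hypotheses simultaneously, especially the non-balancing condition.
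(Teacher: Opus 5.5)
Your reduction matches the paper's, up to and including the key minor argument. That covers excluding balancing edges and pairs, the two-vertex case, choosing $e,f$ via \cref{helper}, and building $\Omega_{\textup{twin}}$ with \cref{lem:twinlemma}. The key argument is that uniqueness forces $\Omega_{\textup{twin}}/A\backslash B=\Lambda_{\textup{twin}}=\Lambda$ for any proper minor $\Lambda=\Omega/A\backslash B$ that keeps $e,f$, keeps the bad cycle $C$ (no deleted edges of $C$, no contracted chords), and still satisfies the twin hypotheses.

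From there on, however, you only express a hope (``this should force $\Omega$ to be very small''), and the one mechanism you name points the wrong way. Hypothesis (iii) of \cref{lem:twinlemma} needs a \emph{non-loop} unbalanced cycle in $\Lambda\backslash\{e,f\}$. \cref{prop:unbalanced-contract} does rule out contraction creating a balancing pair. It does not rule out that every unbalanced cycle of $\Omega\backslash\{e,f\}/g$ becomes a loop, which happens when the unbalanced cycles all use edges parallel to $g$. That degenerate case is what must be controlled.

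When proving that $C$ is Hamiltonian, the paper avoids this case. It chooses an edge $g$ at a vertex $z\notin V(C)$ so that both $\Omega\backslash\{e,f,g\}$ and $\Omega\backslash\{e,f\}/g$ keep non-loop unbalanced cycles, switching to an edge $h$ at $z$ not parallel to $g$ when $g$ lies in an unbalanced $2$-cycle. Only then does it apply \cref{prop:delete-or-contract}.

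More importantly, the degenerate case is the engine of the rest of the proof. Take an edge $g\ne e,f$ of $C$ with $\Omega\backslash\{e,f\}/g$ $2$-connected. The minor argument forces every unbalanced cycle of $\Omega\backslash\{e,f\}/g$ to be a loop. Hence every unbalanced cycle of $\Omega\backslash\{e,f\}$ uses an edge parallel to $g$, and $g$ lies in the \emph{unique} non-trivial parallel class of $\Omega\backslash\{e,f\}$.

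The contradiction then comes from finding two non-parallel such edges, and this takes real work absent from your proposal. With at least four vertices, the paper proceeds in three steps:
\begin{enumerate}[label=(\arabic*)]
\item For a proper chord $a$ of $C$, both cycles of $C\cup a$ through $a$ are unbalanced in $\Omega$ and in $\Omega_{\textup{twin}}$.
\item It builds a cycle $D$ of $\Omega\backslash\{e,f\}$ through two edges joining the components $P,Q$ of $C-\{e,f\}$, and proves, with some effort, that $D$ is Hamiltonian.
\item Contracting any edge of $P\cup Q$ therefore keeps $\Omega\backslash\{e,f\}$ $2$-connected, and two distinct such edges cannot both lie in the unique parallel class.
\end{enumerate}
This is an outright contradiction, not a reduction to $-K_4$ as you suggest. $-K_4$ has a balancing pair, which you have already excluded.

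The three-vertex case also needs more than a reference to the gainings in \cref{pmK3-exminor}. The paper uses the parallel-class conclusion for $g$, and reruns the twin construction with the pair $\{e,g\}$, to show that all three parallel classes have size exactly two. Only then does it analyse the biases of the triangles, obtaining $\pm K_3$, an explicit $\ZZ_3$-gaining, or a $(4K_2,\emptyset)$-minor.

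As it stands, the heart of the theorem is unproved.
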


\begin{proof}
Let $\Omega = (G,\cB)$ be a biased graph that is an excluded minor for the class of $\ZZ_3$\dash gainable biased graphs.
Assume for a contradiction that $\Omega$ is not one of the biased graphs listed in the theorem statement.
Since the excluded minors form an antichain in the minor order, this means that $\Omega$ has no minor isomorphic to $(4K_{2},\emptyset)$, $\pm K_{3}$, or $-K_{4}$.

It follows from \cref{prop:balancing-edge} that $\Omega$ has no balancing edge.
Since $\Omega$ has no minor isomorphic to $-K_{4}$, \cref{balancingpair} implies that $\Omega$ has no balancing pair of edges.
\Cref{exminors2conn} says that $\Omega$ has at least two vertices, minimum degree at least three, and is biconnected.
Recall that this means that $\Omega$ has no $1$\dash separation.
In particular, $\Omega$ is loopless.
Furthermore, any $2$\dash edge cycle is unbalanced.

\begin{claim}\label{clm:at-least-3-vertices}
$\Omega$ has at least three vertices.
\end{claim}

\begin{proof}
Assume that $\Omega$ has exactly two vertices.
Since $\Omega$ is loopless and every $2$\dash edge cycle is unbalanced, $\Omega$ is isomorphic to $(nK_{2},\emptyset)$ for some positive integer $n$.
\cref{nK2-exminor} now implies that either $\Omega$ is $\ZZ_{3}$\dash gainable, or it contains a $(4K_{2},\emptyset)$\dash minor.
We have a contradiction in either case, so $\Omega$ has at least three vertices.
\end{proof}

Since $\Omega$ is biconnected with at least three vertices, it is $2$\dash connected.
\cref{helper} says that we can choose distinct edges $e$ and $f$ such that each of $\Omega \backslash e$, $\Omega \backslash f$, and $\Omega \backslash \{e,f\}$ is $2$\dash connected.
Because $\Omega$ has no balancing pair, it follows that $\Omega \backslash \{e,f\}$ has an unbalanced cycle, and this cycle is not a loop, since $\Omega$ is loopless.
Now we can apply \cref{lem:twinlemma} to obtain the unique biased graph $\Omega_{\text{twin}} = (G,\cB_{\text{twin}})$ such that $\Omega_{\text{twin}}$ is $\ZZ_3$\dash gainable and $\Omega_{\text{twin}} \backslash e = \Omega \backslash e$ and $\Omega_{\text{twin}} \backslash f = \Omega \backslash f$.
Since $\Omega$ is not $\ZZ_3$\dash gainable, but $\Omega_{\text{twin}}$ is, there must exist at least one cycle that is in exactly one of $\cB$ and $\cB_{\text{twin}}$.
Any such cycle is said to be a \emph{bad cycle} of $\Omega$.
As $\Omega_{\text{twin}} \backslash e = \Omega \backslash e$, it follows that any bad cycle contains $e$, and by symmetry, also contains $f$.

\begin{claim}
\label{clm:bad-cycle-in-minor}
Let $C$ be a bad cycle.
Let $\Lambda = \Omega/A\backslash B$ be a proper minor of $\Omega$ such that:
\begin{enumerate}[label = \textup{(\roman*)}]
\item $e$ and $f$ are edges of $\Lambda$,
\item $B$ does not contain any edge of $C$,
\item if $P$ is a path of $\Omega[A]$ that joins two distinct vertices of $C$, then every edge of $P$ is in $C$,
\item if $e$ is pendant in $\Lambda\backslash f$, then it is pendant in $\Lambda$, and
\item if $f$ is pendant in $\Lambda\backslash e$, then it is pendant in $\Lambda$.
\end{enumerate}
Then either $\Lambda\backslash\{e,f\}$ is not connected (even after removing isolated vertices), or it has a proper $1$\dash separation, or the only unbalanced cycles in $\Lambda\backslash\{e,f\}$ are loops.
\end{claim}

\begin{proof}
Assume for a contradiction that $\Lambda\backslash \{e,f\}$ is connected (up to isolated vertices), has no proper $1$\dash separation, and also contains a non-loop unbalanced cycle.
Certainly $\Lambda$ is $\ZZ_{3}$\dash gainable because it is a proper minor of $\Omega$, and therefore so are $\Lambda\backslash e$ and $\Lambda\backslash f$.
We may apply \cref{lem:twinlemma} to $\Lambda$ and find the unique $\ZZ_{3}$\dash gainable biased graph $\Lambda_{\text{twin}}$ on the same underlying graph as $\Lambda$ where $\Lambda_{\text{twin}}\backslash e = \Lambda\backslash e$ and $\Lambda_{\text{twin}}\backslash f = \Lambda\backslash f$.
But $\Lambda$ is also a $\ZZ_{3}$\dash gainable biased graph.
Furthermore, $\Lambda\backslash e = \Lambda\backslash e$ and $\Lambda\backslash f = \Lambda\backslash f$ trivially hold.
So the uniqueness of $\Lambda_{\text{twin}}$ now implies that $\Lambda=\Lambda_{\text{twin}}$.

Because $\Omega_{\text{twin}}\backslash e = \Omega \backslash e$, we see that
\[
(\Omega_{\text{twin}}/A\backslash B)\backslash e
= (\Omega_{\text{twin}}\backslash e)/A\backslash B
=(\Omega\backslash e)/A\backslash B
=(\Omega / A \backslash B)\backslash e = \Lambda\backslash e.
\]
Similarly, $(\Omega_{\text{twin}}/A\backslash B)\backslash f=\Lambda\backslash f$.
Now $\Omega_{\text{twin}}$ is $\ZZ_{3}$\dash gainable, and therefore so is $\Omega_{\text{twin}}/A\backslash B$.
Uniqueness implies that $\Omega_{\text{twin}}/A\backslash B = \Lambda_{\text{twin}}=\Lambda = \Omega/A\backslash B$.

The hypotheses imply that $\Lambda$ contains a cycle $C'$ obtained from $C$ by contracting every edge of $C$ that is in $A$.
Because $C$ is a bad cycle, this means that $C'$ is balanced in exactly one of $\Lambda = \Omega/A\backslash B$ and $\Omega_{\text{twin}}/A \backslash B$.
This is a contradiction to $\Omega_{\text{twin}}/A\backslash B = \Omega/A\backslash B$, so the proof is complete.
\end{proof}

Next we show that any bad cycle must be Hamiltonian.
Recall that a chord of a cycle is a non-loop edge that joins two vertices of the cycle without being contained in it.
A chord is \emph{proper} if it is not parallel to an edge of the cycle.

\begin{claim}\label{clm:bad-Hamiltonian}
Any bad cycle contains every vertex of $\Omega$.
\end{claim}

\begin{proof}
Let $C$ be a bad cycle, so that $C$ contains $e$ and $f$.
Suppose there exists a vertex $z \in V(\Omega) \setminus V(C)$.
Note that $z$ is not incident with $e$ or $f$.
Recall that there is a non-loop unbalanced cycle in $\Omega \backslash \{e,f\}$.
Since any unbalanced cycle contains at most two edges incident with $z$, but $z$ has degree at least three, we can choose an edge $g$ incident with $z$ such that $\Omega \backslash \{e,f,g\}$ has a non-loop unbalanced cycle.
Assume that every unbalanced cycle of $\Omega\backslash \{e,f\}/g$ is a loop.
We will prove that $g$ is in an unbalanced $2$\dash edge cycle of $\Omega\del \{e,f\}$.
Let $A$ be an unbalanced cycle of $\Omega\del \{e,f\}$.
If $g$ is in $A$, then $A$ must be a $2$\dash edge cycle, as we want, because otherwise $\Omega\backslash \{e,f\}/g$ has an unbalanced non-loop cycle.
So we will assume that $g$ is not in $A$.
Then $g$ is a chord of $A$, so $A\cup g$ contains two cycles that contain $g$.
At least one of these cycles is unbalanced, or else we violate the theta property.
If $g$ is a proper chord of $A$, then there is an unbalanced  cycle in $A\cup g$ that contains $g$ and at least three edges.
This contradicts our assumption, so $g$ is not a proper chord of $A$, and therefore it is in a $2$\dash edge cycle, which must be unbalanced.

We have proved that if every unbalanced cycle of $\Omega\backslash \{e,f\}/g$ is a loop, then $g$ is in an unbalanced $2$\dash edge cycle of $\Omega\del \{e,f\}$.
In this case, we let $h$ be an edge incident with $z$ that is not parallel to $g$.
(Note that such an edge exists because otherwise $\Omega\backslash \{e,f\}$ would have a proper $1$\dash separation.)
Since there is an unbalanced cycle consisting of two edges parallel with $g$, we see that both $\Omega\backslash \{e,f,h\}$ and $\Omega\backslash \{e,f\}/h$ have non-loop unbalanced cycles.
Now we relabel $h$ as $g$, so in any case we can assume that $g$ is an edge incident with $z$ chosen so that both $\Omega\backslash \{e,f,g\}$ and $\Omega\backslash \{e,f\}/g$ have non-loop unbalanced cycles.

Since $\Omega\backslash \{e,f\}$ is $2$\dash connected and loopless, it follows from \cref{prop:delete-or-contract} that we can choose a graph from $\Omega \backslash \{e,f,g\}$ or $\Omega \backslash \{e,f\} / g$ that is connected without proper $1$\dash separations.
Let $\Lambda$ be either $\Omega/g$ or $\Omega \backslash g$, chosen so that $\Lambda\backslash \{e,f\}$ is connected and has no proper $1$\dash separation.
Note that $e$ and $f$ are edges of $\Lambda$ and that $g$ has been chosen so that it does not join two vertices of the bad cycle $C$ (since it is incident with $z$).
Assume that $e$ is pendant in $\Lambda\backslash f$.
Then $\Lambda$ must be $\Omega\backslash g$, and some degree-three vertex of $\Omega$ is incident with $e$, $f$, and $g$.
But in this case $g$ is a pendant edge of $\Omega\backslash \{e,f\}$, which is a contradiction since this graph is $2$\dash connected.
Therefore $e$ is not pendant in $\Lambda\backslash f$ and symmetrically, $f$ is not pendant in $\Lambda \backslash e$.
Now we apply \cref{clm:bad-cycle-in-minor} with $A$ set to $\{g\}$ and $B$ set to $\emptyset$.
Since $\Lambda\backslash \{e,f\}$ is connected and has no $1$\dash separation, we can deduce that its only unbalanced cycles are loops.
This is impossible because $\Lambda\backslash \{e,f\}$ is equal to either $\Omega\backslash \{e,f,g\}$ or $\Omega\backslash \{e,f\}/g$, and both these biased graphs have non-loop unbalanced cycles.
This contradiction shows that $C$ is a Hamiltonian cycle of $\Omega$.
\end{proof}

Let $C$ be a bad cycle.
Then $C$ contains $e$ and $f$, and is Hamiltonian by \cref{clm:bad-Hamiltonian}.
Since $\Omega$ has at least three vertices, we can deduce that $e$ and $f$ are not parallel.

\begin{claim}\label{clm:bad-containing-x}
Let $C$ be a bad cycle and let $g$ be an edge of $C$ that is not equal to $e$ or $f$.
Assume $\Omega\del \{e,f\}/g$ is connected and has no proper $1$\dash separation.
The only unbalanced cycles of $\Omega\del\{e,f\}$ that contain $g$ are $2$\dash edge cycles.
If $D$ is an unbalanced cycle of $\Omega\del\{e,f\}$ that does not contain $g$, then one of the edges of $D$ is parallel to $g$.
Moreover, $g$ is in the unique non-trivial parallel class of $\Omega\del\{e,f\}$.
\end{claim}

\begin{proof}
We apply \cref{clm:bad-cycle-in-minor} with $\Lambda = \Omega/g$.
Since $g$ is an edge of the bad cycle $C$, it is clear that conditions (i), (ii), and (iii) in this claim are true.
If $e$ is pendant in $\Lambda\del f$, then $\Lambda$ has a vertex with degree at most two, which implies that $\Omega$ too has such a vertex.
This is a contradiction, so by symmetry we see that conditions (iv) and (v) in \cref{clm:bad-cycle-in-minor} both hold.
Therefore the claim implies that $\Lambda\del \{e,f\}$ is not connected, or it has a proper $1$\dash separation, or its only unbalanced cycles are loops.
The hypotheses now imply that the only unbalanced cycles in $\Lambda\del \{e,f\}$ are loops.
Now it follows that the only unbalanced cycles of $\Omega\del \{e,f\}$ that contain $g$ are $2$\dash edge cycles.

Assume that $D$ is an unbalanced cycle of $\Omega\del \{e,f\}$ that does not contain $g$.
Note that $D$ is not a loop.
Then $g$ is a chord of $D$, or else $D$ is a non-loop unbalanced cycle of $\Omega\del \{e,f\}/g$.
At least one of the two cycles in $D\cup g$ that contains $g$ is unbalanced, by the theta property.
This cycle must be a $2$\dash edge cycle, so $g$ is parallel to an edge of $D$.

There is at least one unbalanced cycle in $\Omega\del \{e,f\}$.
The previous paragraphs now show that $g$ is in a $2$\dash edge cycle in $\Omega\del\{e,f\}$, so $g$ is in a non-trivial parallel class.
If $\Omega\del \{e,f\}$ contains a different non-trivial parallel class then it contains an unbalanced $2$\dash edge cycle that does not contain $g$ and which does not contain an edge parallel to $g$, which contradicts our previous conclusions.
\end{proof}

\begin{claim}\label{clm:four-vertices}
$\Omega$ has at least four vertices
\end{claim}

\begin{proof}
We proved in \cref{clm:at-least-3-vertices} that $\Omega$ has at least three vertices, so assume that it has exactly three.
Let these vertices be $u_{0}$, $u_{1}$, and $u_{2}$, where we can assume that $e$ joins $u_{0}$ and $u_{1}$ while $f$ joins $u_{0}$ and $u_{2}$.
Let $g$ be the third edge in the bad cycle $C$, so that $g$ joins $u_{1}$ and $u_{2}$.
If $e$ is not in a $2$\dash edge cycle, then $\Omega\del \{e,f\}$ cannot be $2$\dash connected, a contradiction.
Therefore we can assume that the edge $e'$ is parallel to $e$.
We can similarly assume that $f'$ is parallel to $f$.
Then $\Omega\del \{e,f\}/g$ is certainly connected, and it cannot have a proper $1$\dash separation because it only has two vertices.
So we can apply \cref{clm:bad-containing-x} and deduce that $g$ is in the unique non-trivial parallel class of $\Omega\del \{e,f\}$.
Let $g'$ be an edge that is parallel to $g$.
We see that $\{e,e'\}$ and $\{f,f'\}$ are both parallel classes of size two.

Now $\Omega\del e$, $\Omega\del g$, and $\Omega\del \{e,g\}$ are all $2$\dash connected.
Therefore we could construct $\Omega_{\text{twin}}$ relative to the pair $\{e,g\}$ instead of $\{e,f\}$.
In this case we would find a bad cycle that contains $e$, $g$, and either $f$ or $f'$.
By using the same arguments as above, we could show that $\{f,f'\}$ is the unique non-trivial parallel class of $\Omega\del \{e,g\}$.
But this implies that $\{g,g'\}$ is a parallel class of $\Omega$.
So now we know that $\Omega$ has three parallel classes, each of size two.

\begin{figure}[htb]
\centering

\begin{tikzpicture}

\node[name = u0] at (90:1.5cm) {};
\node[name = u1] at (210:1.5cm) {};
\node[name = u2] at (-30:1.5cm) {};

\draw[thick] (u0.center) to [bend right = 25] node [pos = 0.5, left = 1mm] {$e$} (u1.center);
\draw[thick] (u1.center) edge[bend right = 25] node [pos = 0.5, below = -0.8mm] {$g'$} (u2.center);
\draw[thick] (u2.center) edge[bend right = 25] node [pos = 0.5, right = 1mm] {$f$} (u0.center);
\draw[thick, dashed] (u0.center) to [bend left = 25] node [pos = 0.4, left = 0mm] {$e'$} (u1.center);
\draw[thick, dashed] (u1.center) edge[bend left = 25] node [pos = 0.5, above = -1mm] {$g$} (u2.center);
\draw[thick, dashed] (u2.center) edge[bend left = 25] node [pos = 0.6, right = 0mm] {$f'$} (u0.center);

\node[above] at (u0) {$u_{0}$};
\node[left] at (u1) {$u_{1}$};
\node[right] at (u2) {$u_{2}$};

\filldraw[thick, fill = white] (u0) circle (1mm);
\filldraw[thick, fill = white] (u1) circle (1mm);
\filldraw[thick, fill = white] (u2) circle (1mm);

\end{tikzpicture}

\caption{The biased graph $\Omega$. The solid lines are the edges in the set $X$.}
\label{fig:placeholder}
\end{figure}

Let $X = \{e,f,g'\}$.
We will show that a Hamiltonian cycle is balanced if and only if it contains an even number of edges in $X$.
This will show that $\Omega$ is isomorphic to $\pm K_{3}$, a contradiction.
The cycle which contains zero edges of $X$ is $\{e',f',g\}$, and 
\cref{clm:bad-containing-x} implies that this cycle is balanced, because it is a cycle of $\Omega\del \{e,f\}$ that contains $g$.
Assume $D$ and $D'$ are Hamiltonian cycles of $\Omega$ that have exactly two edges in common.
If one of these cycles is balanced, then the other is unbalanced, because otherwise the union of $D$ and $D'$ violates the theta property (remembering that every $2$\dash edge cycle is unbalanced).
Now it follows that any cycle with exactly one edge in $X$ is unbalanced.

Assume that $X$ is a balanced cycle.
Then any cycle that contains exactly two edges of $X$ is unbalanced, by the same argument as in the previous paragraph.
In this case a cycle is balanced if and only if it is $X$ or the complement of $X$.
We define the gaining $\gamma$ so that it every edge not in $X$ to $1$, and so that
\[
\gamma(e,u_{0},u_{1}) = \gamma(g',u_{1},u_{2}) = \gamma(f,u_{2},u_{0}) = \omega.
\]
Now we can check that $\gamma$ is a $\ZZ_{3}$\dash gaining of $\Omega$ and we have a contradiction.
Therefore $X$ is unbalanced.

If every cycle that contains exactly two edges of $X$ is balanced, then $\Omega$ is isomorphic to $\pm K_{3}$, and we have our desired contradiction.
So we assume there is at least one unbalanced cycle that contains exactly two edges of $X$.
Assume there is exactly one such cycle.
Using symmetry, we will assume that $\{e,f,g\}$ is unbalanced, but $\{e,f',g'\}$ and $\{e',f,g'\}$ are both balanced.
In this case we can define $\gamma$ so that once again it takes every edge not in $X$ to $1$ and so that
\[
\gamma(e,u_{0},u_{1}) = \gamma(f,u_{2},u_{0})=\omega
\quad\text{and}\quad
\gamma(g',u_{1},u_{2})=\omega^{2}.
\]
This again shows that $\Omega$ is $\ZZ_{3}$\dash gainable, so we have another contradiction.
Now we must assume that there are at least two unbalanced cycles that contain exactly two edges in $X$.
Without loss of generality, we can assume that $g'$ is in two such cycles.
Therefore $\{e,f',g'\}$ and $\{e',f,g'\}$ are both unbalanced.
We also know that $\{e,e'\}$, $\{f,f'\}$, $\{e',f',g'\}$, and $\{e,f,g'\} = X$ are unbalanced.
This means that $\Omega/g'\del g$ is isomorphic to $(4K_{2},\emptyset)$, which is a contradiction.
\end{proof}

\begin{claim}\label{clm:proper-chord-unbalanced}
Let $C$ be a bad cycle.
Let $a$ be a proper chord of $C$.
Let $C'$ be a cycle contained in $C\cup a$ that contains $a$.
Then $C'$ is unbalanced in both $\Omega$ and $\Omega_{\textup{twin}}$.
\end{claim}

\begin{proof}
Let $C_{1}$ and $C_{2}$ be the cycles contained in $C\cup a$ such that $a$ is in both $C_{1}$ and $C_{2}$.
Since $a$ is a proper chord, neither $C_{1}$ nor $C_{2}$ is Hamiltonian, so \cref{clm:bad-Hamiltonian} implies that neither is a bad cycle.
So $C_{1}$ is balanced in both $\Omega$ and $\Omega_{\textup{twin}}$, or unbalanced in both.
The same statement applies to $C_{2}$.
Recall that $C$ is balanced in exactly one of $\Omega$ and $\Omega_{\textup{twin}}$.
Let $\{\Omega_{\textup{bal}}, \Omega_{\textup{unbal}}\}$ be equal to $\{\Omega, \Omega_{\textup{twin}}\}$, where $C$ is balanced in $\Omega_{\textup{bal}}$ and unbalanced in $\Omega_{\textup{unbal}}$.
If $C_{1}$ and $C_{2}$ are both balanced, then in $\Omega_{\textup{unbal}}$, the theta subgraph consisting of $C$ and $a$ contains exactly two balanced cycles.
If exactly one of $C_{1}$ and $C_{2}$ is balanced, then we obtain the same contradiction to the theta property in $\Omega_{\textup{bal}}$.
So it must be the case that $C_{1}$ and $C_{2}$ are both unbalanced in both $\Omega$ and $\Omega_{\textup{twin}}$.
\end{proof}

We now fix the bad cycle $C$, so that $C$ contains $e$ and $f$.
Let $P$ and $Q$ be the two maximal subpaths of $C$ that contain neither $e$ nor $f$.
Because $C$ is Hamiltonian, every vertex of $\Omega$ is contained in either $P$ or $Q$.
Recall that $\Omega\del \{e,f\}$ is $2$\dash connected.
If $P$ and $Q$ both have at least two vertices, then Menger's Theorem implies that we can find disjoint edges $x$ and $y$ of $\Omega\del\{e,f\}$, both of which join a vertex of $P$ to a vertex of $Q$.
In this case we let $D$ be the cycle consisting of $x$ and $y$ along with the subpaths of $P$ and $Q$ that join the end-vertices of $x$ and $y$.
Now assume that $P$ contains a single vertex, call it $u$.
In $\Omega\del \{e,f\}$, there are two distinct neighbours of $u$ in $Q$, or else the biased graph is not $2$\dash connected.
So we let $D$ be the cycle consisting of the subpath of $Q$ that joins the two neighbours, along with edges joining them to $u$.
Similarly, if $Q$ contains a single vertex, then we can let $D$ be a cycle consisting of a non-trivial subpath of $P$ and two edges joining this subpath to the vertex in $Q$.
In any case, $D$ is a cycle of $\Omega\del\{e,f\}$ with at least three edges, containing exactly two edges joining vertices in $P$ to vertices in $Q$.
If both $P$ and $Q$ contain at least two vertices, then these two edges are disjoint.

\begin{claim}\label{clm:new-Hamiltonian}
$D$ is Hamiltonian.
\end{claim}

\begin{proof}
Note that $D$ contains at least three vertices.

\begin{subclaim}\label{subclm7}
If $g$ is an edge of $C-\{e,f\}$ such that $g$ is not in $D$ and $g$ shares a vertex with either $e$ or $f$, then $\Omega\del \{e,f\}/g$ is $2$\dash connected.
\end{subclaim}

\begin{proof}
Without loss of generality, we can assume that $g$ is in $P$ and that $g$ shares a vertex (call it $u$) with $e$.
Assume $\Omega\del \{e,f\}/g$ is not $2$\dash connected.
Because it has at least three vertices by \cref{clm:four-vertices}, it follows that $\Omega\del \{e,f\}/g$ has a proper $1$\dash separation, and hence a cut-vertex.
In fact, because $\Omega\del \{e,f\}$ is $2$\dash connected, the only cut-vertex of $\Omega\del \{e,f\}/g$ is obtained by identifying the end-vertices of $g$.
Let $w$ be this vertex.
Note that $D$ is a cycle of $\Omega\del \{e,f\}/g$, so some block contains $D$.
Let $v$ be a vertex of $(\Omega\del \{e,f\}/g)-w$ that is not in the same connected component as $D-w$.
Thus $v$ is not in $D$, not equal to $w$, and every $v$\dash $D$ path of $\Omega\del \{e,f\}/g$ contains the cut-vertex $w$.
But $v$ is in either $Q$ or $P-w$.
In the former case, there is a subpath of $Q$ that is a $v$\dash $D$ path of $(\Omega\del \{e,f\}/g)-w$, a contradiction.
So $v$ is in $P-w$.
If $v$ is in the subpath of $P$ from $w$ to $D$, then since $v \ne w$, there is a subpath of $P-w$ that joins $v$ to $D$ in $(\Omega\del \{e,f\}/g)-w$.
So therefore $v$ is not in this subpath.
Notice that $P$ contains more than two vertices, and therefore the intersection of $D$ and $P$ contains at least one edge.
Let $d_{1}$ and $d_{2}$ be the end-vertices of the maximal subpath of $P$ contained in $D$.
Thus $d_{1}\ne d_{2}$.
Without loss of generality, the minimal subpath of $P/g$ in $\Omega\del \{e,f\}/g$ from $w$ to a vertex in $D$ has $d_{1}$ as an end-vertex.
This means that $d_{2}\ne w$.
Therefore the minimal subpath of $P/g$ from $v$ to a vertex in $D$ contains $d_{2}$, and does not contain $w$.
Thus $v$ is joined to a vertex of $D$ by a path in $(\Omega\del \{e,f\}/g)-w$ and we have a final contradiction.
\end{proof}

Assume that $D$ is not Hamiltonian.
Then we can assume without loss of generality that the end-vertex of $P$ incident with $e$ is not in $D$.
Let $u$ be this end-vertex and let $g$ be the edge of $P$ incident with $u$.
Then $\Omega\del \{e,f\}/g$ is $2$\dash connected by \ref{subclm7}, so we are able to apply \cref{clm:bad-containing-x}.
The only unbalanced cycles of $\Omega\del \{e,f\}$ that contain $g$ are $2$\dash edge cycles.
Moreover, $g$ is in the unique non-trivial parallel class of $\Omega\del \{e,f\}$.
If an end-vertex of $Q$ were not in $D$, we could apply \ref{subclm7} again, and deduce that the only non-trivial parallel class of $\Omega\del \{e,f\}$ is incident with one of these end-vertices.
This would mean that the unique non-trivial parallel class of $\Omega\del \{e,f\}$ contains both an edge of $P$ and an edge of $Q$.
This is impossible, so both the end-vertices of $Q$ are in $D$.
Similarly, if we let $u'$ be the end-vertex of $P$ that is incident with $f$, then $u'$ must be in $D$.

Assume that $Q$ contains an edge and let $q$ be an edge in $Q$.
If $\Omega\del \{e,f\}/q$ is $2$\dash connected then we can apply \cref{clm:bad-containing-x} and deduce that $q$ is in the unique non-trivial parallel class of $\Omega\del \{e,f\}$, and we have the same contradiction as in the last paragraph.
Therefore $\Omega\del \{e,f\}/q$ is not $2$\dash connected.
Let $w$ be the vertex obtained by identifying the end-vertices of $q$.
Then $w$ is a cut-vertex of $\Omega\del \{e,f\}/q$.
Note that $D/q$ is a cycle of $\Omega\del \{e,f\}/q$, so there is some vertex $v$ not in the same component of $(\Omega\del \{e,f\}/q)-w$ as $(D/q)-w$.
Therefore any $v$\dash $(D/q)-w$ path in $\Omega\del \{e,f\}/q$ passes through $w$.
But $Q/q$ is a subpath of $D/q$, so all vertices of $Q/q$ are in $D/q$.
Therefore $v$ is in $P$.
But then a subpath of $P$ joins $v$ to $D/q$ in $(\Omega\del \{e,f\}/q)-w$, and we have a contradiction.
This shows that $Q$ cannot contain an edge and hence $Q$ consists of a single vertex.

Now $D$ has at least three edges.
It does not contain an edge of $Q$, because no such edge exists.
Therefore $D$ contains an edge $p$ that is in $P$.
Note that $p$ and $g$ are distinct edges of $P$.
If $\Omega\del \{e,f\}/p$ were $2$\dash connected, then $p$ would be in the unique non-trivial parallel class of $\Omega\del \{e,f\}$, which is impossible because we can say the same thing of $g$.
Therefore $\Omega\del \{e,f\}/p$ is not $2$\dash connected.
Let $w$ be the vertex produced by identifying the end-vertices of $p$.
Then $w$ is a cut-vertex of $\Omega\del \{e,f\}/p$, so there is some vertex $v$ such that any path from $v$ to the cycle $D/p$ passes through $w$.
Note that $v$ must be in $P$, but not in $D$

Recall that $u$ is the end-vertex of $e$ that is in $P$, and that $u$ is not in $D$.
Let $z$ be the other end-vertex of $P$.
We have shown that $z$ is in $D$.
Let $P'$ be the maximal subpath of $P$ in $\Omega\del \{e,f\}$ that contains no edge of $D$, so that $v$ is a vertex of $P'$.
One end-vertex of $P'$ is $u$.
Let us denote the other end-vertex by $u'$, so $u'$ is the unique vertex of $P'$ in $D$.
The subpath of $P'$ joining $v$ to $D$ must contain $w$ in $\Omega\del \{e,f\}/g$, which means that $u'$ is one of the two vertices identified as $w$.
Thus $p$ is the edge of $D$ that is in $P$ and incident with $u'$.
Since $\Omega\del \{e,f\}$ is $2$\dash connected, it has a $v$\dash $D$ path that does not contain $u'$.
Recall that such a path contains exactly one vertex of $D$.
This path must contain the other end-vertex of $p$, or else it is a $v$\dash $(D/p)$ path of $\Omega\del \{e,f\}/p$ that does not contain $w$.
Note that this path cannot contain the single vertex of $Q$, or else there would be another  $v$\dash $(D/p)$ path of $\Omega\del \{e,f\}/p$ that avoids $w$.
So the existence of this path establishes that there is an edge joining a vertex of $P'-u'$ to the end-vertex of $p$ that is not equal to $u'$.
Let $h$ be such an edge.

\begin{figure}[htb]
\centering

\begin{tikzpicture}[nodestyle/.style = {circle, inner sep = 0mm, minimum size = 2mm, draw = black, thick, fill = white}]

\node (tl) at (0,2) {};
\node (gr) at (1.5,2) {};
\node (v) at (3,2) {};
\node (hl) at (4.5,2) {};
\node (dl) at (6,2) {};
\node (pr) at (7.5,2) {};
\node (tr) at (9,2) {};
\node (bm) at (4.5,-1) {};

\draw[thick] (tl.center) -- (bm.center) node [pos = 0.5, left = 1.5mm] {$e$};
\draw[thick] (tr.center) edge[bend left = 10] node [pos = 0.5, right = 1.5mm] {$f$} (bm.center);
\draw[thick] (tl.center) edge[bend left = 20] node [pos = 0.5, above = 1mm] {$g$} (gr.center);
\draw[thick] (tl.center) edge[bend right = 20] (gr.center);
\draw[thick, dashed] (gr.center) -- (dl.center) {};
\draw[thick] (hl.center) edge[bend left = 60] node [pos = 0.5, above = 1mm] {$h$} (pr.center);

\draw[ultra thick] (dl.center) -- (pr.center) node [pos = 0.5, above = -0.5mm] {$p$};
\draw[ultra thick, dashed] (pr.center) -- (tr.center);
\draw[ultra thick] (tr.center) edge[bend right = 10] (bm.center);
\draw[ultra thick] (bm.center) -- (dl.center);

\node[nodestyle] at (tl) {};
\node at (tl) [above = 1mm] {$u$};
\node[nodestyle] at (tr) {};
\node at (tr) [above = 1mm] {$z$};
\node[nodestyle] at (bm) {};
\node[nodestyle] at (gr) {};
\node[nodestyle] at (dl) {};
\node at (dl) [below right = 0.5mm] {$u'$};
\node[nodestyle] at (v) {};
\node at (v) [above = 1mm] {$v$};
\node[nodestyle] at (pr) {};
\node[nodestyle] at (hl) {};

\end{tikzpicture}

\caption{The biased graph $\Omega$.
The cycle $D$ is marked in bold lines.
The horizontal edges belong to the path $P$.
Dashed lines indicate paths of unspecified length and solid lines indicate edges.
The only non-trivial parallel class of $\Omega\del\{e,f\}$ is the one containing $g$.}
\label{fig:final-case}
\end{figure}

Note that $h$ joins two non-adjacent vertices of $P$, so it is a proper chord of the bad cycle $C$.
Thus \cref{clm:proper-chord-unbalanced} implies that the cycle contained in $P\cup h$ is unbalanced.
This cycle has more than two edges.
As $\Omega\del \{e,f\}/g$ is $2$\dash connected, \cref{clm:bad-containing-x} implies that $g$ must be parallel to an edge of the cycle in $P\cup h$, which means that $g$ is parallel to $h$.
This is impossible since $h$ is incident with a vertex not in $P'$.
\end{proof}

Now we know that $D$ is a Hamiltonian cycle of $\Omega\del \{e,f\}$, so it contains all of $P$ and $Q$.
Since $\Omega$ has at least four vertices, it follows that  there are two distinct edges, call them $g$ and $g'$, that are in either $P$ or $Q$.
Note that $g$ and $g'$ are in $D$.
Now $\Omega\del \{e,f\}/g$ contains a Hamiltonian cycle, and is therefore $2$\dash connected.
The same statement is true of $\Omega\del \{e,f\}/g'$.
Therefore \cref{clm:bad-containing-x} implies that there is a unique non-trivial parallel class of $\Omega\del \{e,f\}$ and this class contains both $g$ and $g'$.
As $g$ and $g'$ are not parallel, we have reached a final contradiction in the proof of \cref{thm:main-theorem}.
\end{proof}

\section{Partial groups}

In this section, we consider gainability over a more general algebraic structure.
We will use these ideas to prove \cref{thm:main1,thm:main2}, restated below.  (Recall that a biased graph is \emph{regular} if it is $\Gamma$\dash gainable for every {non-trivial} group~$\Gamma$.)

\mainone*

Using \cref{thm:main1} together with \cref{thm:Zas81,thm:Z3-excluded-minors}, we obtain the excluded minors for regular biased graphs as an immediate consequence.

\maintwo*

Our introduction of ``partial groups'' mirrors the development of ``partial fields'' in order to attack problems on characterising matroids that are representable over a set of fields \cites{MR1390574,MR2718674,MR2563513}.
In particular, for any finite set of fields $\mathcal{F}$, there exists a partial field $\mathbb{P}$ such that a matroid is representable over each field in $\mathcal{F}$ if and only if it is representable over $\mathbb{P}$.
We prove an analogous result for partial groups, as \cref{fundamentalthmforpartialgroups}.

\subsection{An introduction to partial groups}

\begin{definition}\label{def:partial-group}
A \emph{partial group} is a pair $\bL=(\Gamma,X)$ where $\Gamma$ is a group with identity $\identity{\Gamma}$, and $X\subseteq \Gamma$ is a set satisfying:
\begin{enumerate}[label = \textup{(\roman*)}]
    \item\label{pg:identity} $\identity{\Gamma}\in X$,
    \item\label{pg:inverse} if $a\in X$, then $a^{-1}\in X$,
    \item\label{pg:conjugate} if $a\in X$ and $g\in \Gamma$, then $gag^{-1}\in X$.
\end{enumerate}
Note that when $\Gamma$ is abelian, condition (iii) is satisfied trivially.
We say $a$ is an \emph{element of $\bL$} when $a\in X$, and denote this by writing $a\in\bL$.
We say that $\identity{\Gamma}$ is the \emph{identity} of $\bL$, and we also write this element as $\identity{\bL}$.
\end{definition}

Let $\bL=(\Gamma,X)$ be a partial group, and let $\gamma$ be a $\Gamma$\dash gaining of the biased graph $\Omega$.
If the gain of each closed walk is in $X$, then we say that $\gamma$ is a \emph{$\bL$\dash gaining} of $\Omega$.
If $\Omega$ has an $\bL$\dash gaining, then we say it is 
\emph{$\bL$\dash gainable}.
Note that the gains of simple closed walks of a cycle are inverses when they are the reverse of one another, and conjugates when they are cyclic shifts of one another (but are in the same direction).
So by properties~\ref{pg:inverse} and~\ref{pg:conjugate} of \cref{def:partial-group}, we have the following.
\begin{remark}
    If the gain of one simple closed walk of a cycle $C$ is in $X$, then the gains of all simple closed walks of $C$ are in $X$. Furthermore, they are either all equal to $\identity{\bL}$ or all not equal to $\identity{\bL}$.
\end{remark}
Note that when $\Gamma$ is a group, the pair $(\Gamma,\Gamma)$ is a partial group,
and a $\Gamma$\dash gaining is the same as a $(\Gamma,\Gamma)$\dash gaining. We will thus identify a group $\Gamma$ with its associated partial group $(\Gamma,\Gamma)$.

We observe that switching preserves gainability over a partial group.
\begin{remark}\label{rk:switching}
Let $\gamma(\arc{a}_1)\cdots\gamma(\arc{a}_n)\in X$ be the gain of a simple closed walk.
Switching at an internal vertex does not change this gain, while switching by $g\in \Gamma$ at the end-vertex gives the gain $g\left[\gamma(\arc{a}_1)\cdots\gamma(\arc{a}_n)\right]g^{-1}$.
Property~\ref{pg:conjugate} of \cref{def:partial-group} implies that this gain is in $X$.
Since any switching can be produced by performing a sequence of switchings at vertices, it follows that $\gamma^{\eta}$ is an $\bL$\dash gaining of the biased graph $\Omega$ whenever $\gamma$ is an $\bL$\dash gaining of $\Omega$ and $\eta$ is a switching function.
\end{remark}

\begin{lemma}\label{lem:standard_rep}
Let $\bL=(\Gamma,X)$ be a partial group.
Let $\gamma$ be an $\bL$\dash gaining of the biased graph $\Omega=(G,\cB)$. 
Let $T$ be a forest of $G$.
Then $\Omega$ has an $\bL$\dash gaining where every arc receives a gain in $\bL$ and every edge in $T$ receives the gain $\identity{\bL}$.
\end{lemma}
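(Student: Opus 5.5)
We start from the given $\bL$\dash gaining $\gamma$ and modify it in two stages: first a switching that makes every edge of $T$ carry $\identity{\bL}$, then a second switching, fixed on $T$, that pushes every remaining arc gain into $X$. By \cref{rk:switching} any switching of an $\bL$\dash gaining is again an $\bL$\dash gaining and leaves $\cB_\gamma$ unchanged, so it only remains to check that the final arc gains lie in $X$.

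\emph{Normalising on $T$.} Extend $T$ to a maximal forest $T'$ of $G$; one spanning tree per component of $G$. By \cite{MR1007712}*{Lemma~5.3} there is a switching function $\eta$ such that $\gamma^{\eta}$ sends every edge of $T'$ to $\identity{\Gamma}$. Rename $\gamma^{\eta}$ as $\gamma$. Then every arc of an edge of $T'$ receives $\identity{\bL}$, which lies in $X$ by property~\ref{pg:identity}.

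\emph{Handling a loop.} Let $\arc{a}$ be an arc of a loop $e$ at $v$. The closed walk $v,e,v$ is a simple closed walk, and its gain is $\gamma(\arc{a})$. By hypothesis this gain lies in $X$.

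\emph{Handling a non-tree, non-loop edge.} Let $e\notin T'$ be a non-loop edge and $\arc{a}=(e,u,v)$ one of its arcs. The endpoints $u$ and $v$ lie in the same component, so $T'$ contains a unique $v$\dash $u$ path $P$. Then $\arc{a}$ followed by $P$ is a simple closed walk $W$ traversing the cycle $P\cup e$. Every arc of $P$ has gain $\identity{\Gamma}$, so $\gamma(W)=\gamma(\arc{a})$, which lies in $X$ by hypothesis. The reverse arc has gain $\gamma(\arc{a})^{-1}$, which lies in $X$ by property~\ref{pg:inverse}.

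Hence every arc receives a gain in $\bL$, and every edge of $T\subseteq T'$ receives $\identity{\bL}$. Since $\gamma$ was obtained from an $\bL$\dash gaining by switching, it is still an $\bL$\dash gaining of $\Omega$.

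The only point needing care is that switching preserves the $\bL$\dash gaining property; this is exactly \cref{rk:switching}, which uses property~\ref{pg:conjugate}. With that in hand the argument is routine.
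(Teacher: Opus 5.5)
Your proof is correct and is essentially the paper's argument: switch so that the forest carries $\identity{\bL}$ (which \cref{rk:switching} shows still gives an $\bL$\dash gaining), then read off each remaining arc gain as the gain of a simple closed walk around a fundamental cycle, which lies in $X$. Extending $T$ to a maximal forest $T'$ is a genuine small improvement, because the paper's appeal to ``the unique cycle in $T\cup e$'' breaks down when $T$ is not maximal, since $T\cup e$ need not contain a cycle. One correction to your write-up: the ``second switching'' announced in your opening paragraph never occurs, since the single switching on $T'$ does all the work, so the opening description should be adjusted.
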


\begin{proof}
We observed earlier that there is a switching function $\eta$ such that $\gamma^{\eta}$ takes each edge in $T$ to $\identity{\Gamma} = \identity{\bL}$.
\cref{rk:switching} implies that the gain of any simple closed walk under the gaining $\gamma^{\eta}$ belongs to $X$.
Let \arc{a} be an arc of the edge $e$.
If $e$ is in $T$, then $\gamma^{\eta}$ takes \arc{a} to $\identity{\bL}$.
Otherwise, $\gamma^{\eta}(\arc{a})$ is equal to the gain of a simple closed walk corresponding to the unique cycle in $T\cup e$.
This implies that $\gamma^{\eta}(\arc{a})$ is in $X$.
The result follows.
\end{proof}

\begin{proposition}\label{prop:gaining-minor}
    Let $\bL$ be a partial group. The class of $\bL$\dash gainable biased graphs is closed under minors.
\end{proposition}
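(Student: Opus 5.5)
It suffices to treat a single deletion, a single contraction of a balanced set, and the deletion of an isolated vertex. Any minor is obtained as $\Omega/A\backslash B$ followed by deleting isolated vertices. Deleting edges or isolated vertices only restricts the gaining, and contracting a balanced set $A$ can be done one component of $\Omega[A]$ at a time.

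Let $\gamma$ be an $\bL$\dash gaining of $\Omega=(G,\cB)$. For deletion of a set $B$ (or of an isolated vertex), I restrict $\gamma$ to the arcs of $G\backslash B$. Every cycle of $G\backslash B$ is a cycle of $G$ with the same bias in $\Omega\backslash B$ as in $\Omega$. Its gain is unchanged, so the gain still lies in $X$ and is $\identity{\bL}$ exactly when the cycle is balanced. Hence the restriction is an $\bL$\dash gaining of $\Omega\backslash B$.

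For contraction, let $A$ be a set with $\Omega[A]$ balanced, and choose a maximal forest $T$ of $G[A]$. By \cref{lem:standard_rep} (switching, which preserves $\bL$\dash gainability by \cref{rk:switching}), I may assume $\gamma$ sends every edge of $T$ to $\identity{\bL}$. Every edge of $A$ closes a cycle with $T$ inside $G[A]$. That cycle is balanced because $\Omega[A]$ is balanced, so its gain is the identity, and hence $\gamma$ sends every edge of $A$ to $\identity{\Gamma}$. This includes loops in $A$, which must be balanced.

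I then define $\gamma'$ on $G/A$. An arc $(e,P,Q)$ of $G/A$, where $P,Q$ are components of $G[A]$, comes from an arc $(e,u,v)$ of $G$ with $u\in P$ and $v\in Q$, and I set $\gamma'(e,P,Q)=\gamma(e,u,v)$. Two points need care: a non-loop edge of $G$ that becomes a loop, and a loop $e$ at $P$ in $G/A$, where the arc $(e,P,P)$ can come from either direction $(e,u,v)$ or $(e,v,u)$. For a loop I fix one arc of $G$ to define $\gamma'(e,P,P)$ and set the reverse arc to its inverse; this is allowed, since only non-loop edges are forced to be self-consistent. For a non-loop edge of $G/A$, $\gamma'(\arc{a}^{-1})=\gamma'(\arc{a})^{-1}$ is inherited from $\gamma$.

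The key step, and the one I expect to be the main bookkeeping obstacle, is comparing gains. Let $C$ be a cycle of $G/A$, let $W$ be a simple closed walk of $C$, and let $C'$ be a cycle of $G$ obtained by inserting paths of $T$ (or of $G[A]$) between consecutive edges. The corresponding closed walk $W'$ of $C'$ alternates between the arcs of $W$ and arcs of $A$, which have gain $\identity{\Gamma}$. Hence $\gamma(W')=\gamma'(W)$, possibly after inverting if the loop orientation was chosen oppositely. The set $X$ is closed under inverses and $\identity{\Gamma}$ is self-inverse, so in either case $\gamma'(W)\in X$. Moreover $\gamma'(W)=\identity{\Gamma}$ iff $C'$ is balanced iff $C\in\cB/A$, by the definition of contraction. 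Therefore $\gamma'$ is an $\bL$\dash gaining of $\Omega/A$, and the class is minor-closed.
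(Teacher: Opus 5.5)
Your proof is correct and follows the paper's argument. You switch via \cref{lem:standard_rep} so that a maximal forest of $G[A]$, and hence (by balance) every edge of $A$, has gain $\identity{\Gamma}$; you then restrict $\gamma$ and compare the gain of a simple closed walk of a cycle $C$ of the minor with that of a lifted cycle $C'$ of $\Omega$. Your separate treatment of non-loop edges that become loops is more careful than the paper's bare ``restricting $\gamma$''. One small correction: under the paper's conventions a loop has the single arc $(e,P,P)$, so you just set $\gamma'(e,P,P)=\gamma(e,u,v)$ for one chosen orientation, and there is no separate reverse arc to set to its inverse.
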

\begin{proof}
    Let $\gamma$ be an $\bL$\dash gaining of a biased graph $\Omega=(G,\cB)$.
    Let $A$ and $B$ be disjoint sets of edges such that $\Omega[A]$ is balanced.
    Let $T'$ be a maximal forest of $\Omega[A]$ and let $T$ be a maximal forest of $\Omega$ that contains $T'$.
\cref{lem:standard_rep} implies there is an $\bL$\dash gaining $\gamma$ of $\Omega$ where edges in $T$ receive a gain of $\identity{\bL}$.
It follows that any edge in $A$ also receives a gain of $\identity{\bL}$.
Now for a cycle $C$ in $\Omega/A\del B$, consider a cycle $C'$ in $\Omega$ corresponding to $C$; that is to say every edge of $C$ is in $C'$, and every edge of $C'-C$ is in $A$. So by taking a closed walk $W'$ on $C'$ and omitting the identity gains on elements of $A$, we have that $\gamma(W')$ is equal to the gain $C$ receives from the restriction of $\gamma$ to $\Omega/A\del B$.
By the definition of minors, $C$ is in $\cB/A\del B$ precisely when $C'$ is in $\cB$, which is when true exactly when $\gamma(W')=\identity{\Gamma}$.
Thus restricting $\gamma$ to the edges of $\Omega /A\del B$ produces an $\bL$\dash gaining for $\Omega/A\del B$.
\end{proof}

\subsection{Partial-group Homomorphisms}

\begin{definition}\label{def:homomorphism}
Let $\bL_{1}=(\Gamma_{1},X_{1})$ and $\bL_{2}=(\Gamma_{2},X_{2})$ be partial groups with identities $\identity{\bL_{1}}$ and $\identity{\bL_{2}}$ respectively, and let $\phi : X_{1} \rightarrow X_{2}$ be a function.
Then $\phi$ is a \emph{partial-group homomorphism} if it satisfies the following:
\begin{enumerate}[label = \textup{(\roman*)}]
    \item\label{hom:id} $\phi(\identity{\bL_{1}}) = \identity{\bL_{2}}$,
    \item\label{hom:nonid} if $x$ is in $X_{1}-\{\identity{\bL_{1}}\}$, then $\phi(x) \neq \identity{\bL_{2}}$,
    \item\label{hom:prod} if $a, b\in X_{1}$ are such that $ab$ is in $X_{1}$, then $\phi(ab) = \phi(a)\phi(b)$.
\end{enumerate}
\end{definition}

\begin{example}
Let $\ZZ_{2}$ be $\{1,\omega\}$ and let $\ZZ_{2}^{2}$ be $\{1,a,b,ab\}$.
The map
\[
1\mapsto 1,\quad a\mapsto \omega,\quad b\mapsto \omega
\]
is a partial-group homomorphism from $(\ZZ_{2}^{2}, \{1,a,b\})$ to $(\ZZ_{2}, \ZZ_{2})$.
\end{example}

Note that a composition of homomorphisms is also a homomorphism.
We say that partial groups $\bL_1$ and $\bL_2$ are \emph{homomorphically equivalent} if there exist partial-group homomorphisms from $\bL_1$ to $\bL_2$, and from $\bL_2$ to $\bL_1$. Note the existence of a bijective partial-group homomorphism between $\bL_1$ and $\bL_2$ is sufficient for them to be  homomorphically equivalent. However it is not necessary; indeed, one can easily construct a homomorphism from $(\ZZ_2,\ZZ_2)$ to $(\ZZ^2_2,\{1,a,b\})$, so these partial groups are homomorphically equivalent.

\begin{lemma}
\label{lem:homomorphisms}
  Let $\bL_1$ and $\bL_2$ be partial groups, and let $\phi : \bL_1 \rightarrow \bL_2$ be a partial-group homomorphism.  If a biased graph is gainable over $\bL_1$, then it is gainable over $\bL_2$.
\end{lemma}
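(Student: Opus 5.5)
We start from an $\bL_1$\dash gaining and push it through $\phi$ arc by arc, after first normalising it with \cref{lem:standard_rep}. Let $\Omega=(G,\cB)$ be $\bL_1$\dash gainable, with $\bL_1=(\Gamma_1,X_1)$ and $\bL_2=(\Gamma_2,X_2)$. Choose a maximal forest $T$ of $G$. By \cref{lem:standard_rep} there is an $\bL_1$\dash gaining $\gamma$ of $\Omega$ in which every arc receives a gain in $X_1$ and every edge of $T$ receives $\identity{\bL_1}$.

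Define $\gamma_2(\arc{a})=\phi(\gamma(\arc{a}))$ for every arc $\arc{a}$. To see that this is a $\Gamma_2$\dash gaining, we need $\phi(x^{-1})=\phi(x)^{-1}$ for $x\in X_1$. This follows from property~\ref{hom:prod} and property~\ref{hom:id}, since $x$, $x^{-1}$ and $xx^{-1}=\identity{\bL_1}$ all lie in $X_1$, so $\phi(x)\phi(x^{-1})=\identity{\bL_2}$.

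The key step is computing gains of cycles. Each cycle of $G$ either is a loop or contains a non-tree edge. Fix a non-tree non-loop edge $e$, and let $C_e$ be the fundamental cycle of $e$ with respect to $T$. A simple closed walk around $C_e$ has $\gamma$\dash gain equal to $\gamma(\arc{a})$, where $\arc{a}$ is the arc of $e$, because all other arcs are tree arcs with gain $\identity{}$. The same holds for $\gamma_2$, since $\phi(\identity{\bL_1})=\identity{\bL_2}$. So the gains of $C_e$ are $\phi(\gamma(\arc{a}))\in X_2$. For a loop the gain is a single arc gain, which also lies in $\phi(X_1)\subseteq X_2$.

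For a general cycle $C$ with several non-tree edges, I need the product of the arc gains along a walk to be carried by $\phi$. The plan is an induction on the number of non-tree edges in $C$, and this is the main obstacle. Let $W=\arc{a}_1\cdots\arc{a}_n$ be a simple closed walk of $C$. Its $\gamma$\dash gain is $g=\gamma(\arc{a}_1)\cdots\gamma(\arc{a}_n)\in X_1$, because $\gamma$ is an $\bL_1$\dash gaining. However, \ref{hom:prod} only applies to products of two elements, and only when the partial products lie in $X_1$.

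To handle this, I will write the gain of $W$ (up to conjugation and cyclic shift) as a product of the gains of the non-tree arcs in order, say $x_1x_2\cdots x_k$ with $x_i\in X_1$, and try to arrange that every prefix $x_1\cdots x_j$ lies in $X_1$. The idea is to realise each prefix as the gain of another cycle in $G$. Take the walk along $C$ from the start through the first $j$ non-tree edges, then close it up along the tree path back to the start. The resulting closed walk may be non-simple, which is the difficulty. If it is genuinely a closed walk in $G$, its gain is a product that I would argue lies in $X_1$ by decomposing it into simple cycles of $\Omega$ and using conjugation closure. If that fails, I will fall back on the definition in the paper, under which an $\bL$\dash gaining requires the gain of every closed walk to lie in $X$; this gives prefix membership directly. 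With prefixes in $X_1$, applying \ref{hom:prod} repeatedly gives $\gamma_2(W)=\phi(\gamma(W))\in X_2$.

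Finally, property~\ref{hom:nonid} together with \ref{hom:id} shows that $\gamma_2(W)=\identity{\bL_2}$ if and only if $\gamma(W)=\identity{\bL_1}$. Hence $\cB_{\gamma_2}=\cB_\gamma=\cB$, and $\gamma_2$ is an $\bL_2$\dash gaining of $\Omega$.
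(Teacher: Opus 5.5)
\textbf{There is a genuine gap in your proposal.} It is the step where you need every prefix product $x_1\cdots x_j$ to lie in $X_1$. Neither of your justifications works.

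Your first argument decomposes the non-simple closed walk into simple cycles and uses conjugation closure. That writes its gain as a product of (conjugates of) elements of $X_1$, but $X_1$ is not closed under multiplication: in $\bU_1$, $\alpha\cdot\alpha=\alpha^{2}\notin\{1,\alpha,\alpha^{-1}\}$. So nothing puts the product in $X_1$, and property (iii) of a homomorphism cannot be invoked without that.

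Your fallback reads the definition as requiring every closed walk to have gain in $X_1$. That is not the operative definition. Under it, traversing an unbalanced cycle twice gives gain $\alpha^{\pm 2}\notin X$, so no biased graph with an unbalanced cycle would be $\bU_1$\dash gainable. For example, $(2K_{2},\emptyset)$ is regular but would not be $\bU_1$\dash gainable, contradicting \cref{thm:reg}. The remark after the definition and the proofs of \cref{lem:standard_rep,lem:products} only use, and only verify, gains of simple closed walks. Also, if you did adopt the literal reading, your final paragraph would owe the conclusion for every closed walk under $\gamma_2$, not just simple ones.

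With the real hypothesis, you only know that gains of cycles lie in $X_1$. Your prefix walks are not cycles in general: a segment of $C$ closed up by a tree path back to a fixed start is not simple, because the tree path can re-enter $C$.

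\textbf{The missing idea} is to split $C$ only where both pieces are cycles.
\begin{enumerate}
\item Take two non-tree edges $e,f$ of $C$. Since $T$ is a maximal forest, there is a path $P$ of $T$ joining the two components of $C-\{e,f\}$. Choosing a minimal such path makes it internally disjoint from $C$, so $C\cup P$ is a theta.
\item Let $x$ and $y$ be the ends of $P$, and let $W$ start at $x$ and traverse $e$ before $f$. Let $W_e$ and $W_f$ be the simple closed walks of the two cycles of the theta through $P$, one containing $e$ and the other containing $f$. The traversals of $P$ cancel, so $\gamma(W)=\gamma(W_e)\gamma(W_f)$.
\item All three of $\gamma(W)$, $\gamma(W_e)$, $\gamma(W_f)$ are gains of cycles, hence lie in $X_1$, so property (iii) applies.
\item Each of the two smaller cycles has fewer non-tree edges than $C$. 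Induction on $|C-T|$ therefore gives $\gamma_2(W)=\phi(\gamma(W))$.
\end{enumerate}
This is exactly the paper's argument.

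The rest of your proposal is fine: the normalisation via \cref{lem:standard_rep}, your check that $\phi(x^{-1})=\phi(x)^{-1}$ (which the paper leaves implicit), the fundamental-cycle and loop cases, and the final use of properties (i) and (ii).
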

\begin{proof}
    Suppose a biased graph $\Omega=(G,\cB)$ is gainable over $\bL_1$.
    Take some maximal forest $T$ of $\Omega$.
    By \cref{lem:standard_rep}, there is an $\bL_1$\dash gaining $\gamma_1$ of $\Omega$ where every arc receives a label in $\bL_1$ and the edges of $T$ receive the label $1_{\bL_1}$.
    We claim that $\gamma_2=\phi\circ\gamma_1$ is an $\bL_{2}$\dash gaining of $\Omega$.
    We first show that $\phi$ distributes over simple closed walks.
    \begin{claim}
        For a simple closed walk $W$ in $G$, we have $\gamma_2(W)=\phi(\gamma_1(W))$.
    \end{claim}
    \begin{proof}
        For a cycle $C$ of $G$, we proceed by induction on the number of edges in $C-T$. As $T$ is a forest, we can assume that $C$ has at least one element not in $T$.
        
        If $|C-T|=1$, then by Lemma~\ref{lem:standard_rep}, one of the arcs in $C$ is given a label in $\bL_1$ while the rest are given the label $\identity{\bL_{1}}$.
        Let $W$ be a simple closed walk contained in $C$ and let \arc{b} be an arc in $W$ such that every other arc of $W$ receives the label $\identity{\bL_{1}}$.
Then
\begin{multline*}
\gamma_{2}(W) =
\prod_{\arc{a}\in W}\gamma_2(\arc{a})=
\prod_{\arc{a}\in W}\phi(\gamma_1(\arc{a}))\\
=\phi(\gamma_{1}(\arc{b}))=
\phi\left(\prod_{\arc{a}\in W}\gamma_{1}(\arc{a})\right)=
\phi(\gamma_1(W)).
\end{multline*}
So the claim holds when $|C-T|=1$.
Suppose $|C-T|\geq 2$ and that the claim holds for all cycles $C'$ with $|C'-T|<|C-T|$.
Take distinct $e,f\in C-T$ and consider the two components of $C-e-f$. As $T$ is a maximal forest, there is a path $P$ within $T$ between two vertices $x$ and $y$ in distinct components. Note $C\cup P$ is a theta graph containing the edges $e$ and $f$.
        Let $C_e\ni e$ be the cycle in $C\cup P-f$ and let $C_f\ni f$ be the cycle in $C\cup P-f$.
        Note $C$ is equal to the symmetric difference $C_e \triangle C_f$ and that $|C_e-T|,|C_f-T|<|C-T|$ as they are missing $f$ and $e$ respectively.
        Let $W$ be the simple walk of $C$ that starts and ends at $x$, and traverses $e$ before $f$.
        Let $W_e$ be the simple walk of $C_e$ that starts at $x$ and traverses $e$ before finally taking $P$ from $y$ to $x$. Let $W_f$ be the simple walk of $C_f$ that starts by taking $P$ from $x$ to $y$, traverses $f$ and ends at $x$.
        We note that for $i\in\{1,2\}$ we have
        \[\gamma_i(W)=\prod_{\arc{a}\in W}\gamma_i(\arc{a})=\gamma_i(W_e)\gamma_i(W_f).\]
        As $\gamma_1$ is an $\bL_1$\dash gaining of $\Omega$, we have that $\gamma_1(W_e),\gamma_1(W_f)$ and $\gamma_1(W)=\gamma_1(W_e)\gamma_1(W_f)$ are in $\bL_1$, so $\phi(\gamma_1(W_e))\phi(\gamma_1(W_f))=\phi(\gamma_1(W))$.
        By the induction hypothesis, we have
        \[
   \gamma_2(W_e)\gamma_2(W_f)=\phi(\gamma_1(W_e))\phi(\gamma_1(W_f)).
        \]
        Putting all of this together, we have that
        \begin{align*}
        \gamma_2(W) &=\gamma_2(W_e)\gamma_2(W_f)\\
                    &=\phi(\gamma_1(W_e))\phi(\gamma_1(W_f))\\
                    &=\phi(\gamma_1(W_e)\gamma_1(W_f))\\
                    &=\phi(\gamma_1(W)).
        \end{align*}
        So $\phi$ distributes over closed simple walks as desired.
        \end{proof}
        We are now ready to show that $\gamma_2=\phi\circ\gamma_1$ is a gaining of $\Omega=(G,\cB)$ over $\bL_2$.
        Consider a simple closed walk $W$ of a cycle $C$ of $G$.
        By the previous claim we have that $\gamma_2(W)=\phi(\gamma_1(W))$.
        By Properties~\ref{hom:id} and~\ref{hom:nonid} of \cref{def:partial-group}, we have that $\phi(\gamma_1(W))$ is equal to $\identity{\bL_{2}}$ precisely when $\gamma_1(W)=\identity{\bL_{1}}$, which in turn is precisely when $C\in\cB$ as $\gamma_1$ is an $\bL_1$\dash gaining of $\Omega$.
        Thus $\gamma_2$ is indeed an $\bL_2$\dash gaining of $\Omega$.
\end{proof}

We note however, that a partial-group homomorphism $\phi:\bL_1\to\bL_2$ is likely not necessary for $\bL_1$\dash gainability to imply $\bL_2$\dash gainability and there may be more general forms of \cref{lem:homomorphisms}.
For comparison, see the lift theorem for partial fields~\cite{MR2563513}.  

We obtain the following as a consequence of \cref{lem:homomorphisms}.
\begin{proposition}
    Let $\bL_1$ and $\bL_2$ be partial groups that are homomorphically equivalent.
    A biased graph $\Omega$ is gainable over $\bL_1$ if and only if it is gainable over $\bL_2$.
\end{proposition}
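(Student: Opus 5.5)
This is a direct consequence of \cref{lem:homomorphisms}, applied once in each direction. Since $\bL_1$ and $\bL_2$ are homomorphically equivalent, the definition supplies partial-group homomorphisms $\phi\colon \bL_1\to\bL_2$ and $\psi\colon \bL_2\to\bL_1$.

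First, suppose $\Omega$ is $\bL_1$\dash gainable. Applying \cref{lem:homomorphisms} to $\phi$ shows that $\Omega$ is $\bL_2$\dash gainable. Concretely, after normalising an $\bL_1$\dash gaining on a maximal forest via \cref{lem:standard_rep}, the composite $\phi\circ\gamma_1$ is an $\bL_2$\dash gaining.

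Conversely, suppose $\Omega$ is $\bL_2$\dash gainable. Applying \cref{lem:homomorphisms} to $\psi$ shows that $\Omega$ is $\bL_1$\dash gainable.

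There is no real obstacle here. The only point to check is that the definition of homomorphic equivalence provides maps in both directions, and it does by definition. We never need the two maps to be mutually inverse, or even bijective. All the substantive work, namely that a partial-group homomorphism respects gains of simple closed walks and detects the identity via properties (i) and (ii) of \cref{def:homomorphism}, was already done inside \cref{lem:homomorphisms}.
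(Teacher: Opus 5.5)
Your proof is correct and is the paper's argument: the paper states this proposition as an immediate consequence of \cref{lem:homomorphisms}. It applies that lemma to the two partial-group homomorphisms $\bL_1\to\bL_2$ and $\bL_2\to\bL_1$ that homomorphic equivalence supplies, exactly as you do.
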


\subsection{Partial product}

Let $\bL_{1}=(\Gamma_{1},X_{1})$ and $\bL_{2}=(\Gamma,X_{2})$ be partial groups.
We make the following definition.
\[Z = \{(\identity{\Gamma_{1}},\identity{\Gamma_{2}})\} \cup \{(x_{1},x_{2})\in X_{1}\times X_{2}\colon x_{1} \neq \identity{\Gamma_{1}} \textrm{ and } x_{2} \neq \identity{\Gamma_{2}}\}.\]
It is not difficult to check that $(\Gamma_{1} \times \Gamma_{2},Z)$ is a partial group with identity $(\identity{\Gamma_{1}},\identity{\Gamma_{2}})$.  We call this partial group the \emph{partial product} of $\bL_{1}$ and $\bL_{2}$, and denote it $\bL_{1} \otimes \bL_{2}$.

\begin{lemma}
\label{lem:products}
Let $\bL_1$ and $\bL_2$ be partial groups.
A biased graph is gainable over both $\bL_1$ and $\bL_2$ if and only if it is gainable over the partial group $\bL_1\otimes \bL_2$.
\end{lemma}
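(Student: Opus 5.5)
One direction is immediate from \cref{lem:homomorphisms}. The coordinate projections $\pi_i\colon Z\to X_i$, $(x_1,x_2)\mapsto x_i$, are partial-group homomorphisms from $\bL_1\otimes\bL_2$ to $\bL_i$:
\begin{enumerate}[label=\textup{(\roman*)}]
\item the identity maps to the identity;
\item a non-identity element of $Z$ has both coordinates non-identity, so its image is not the identity;
\item multiplication in $\Gamma_1\times\Gamma_2$ is coordinatewise, so whenever $a$, $b$, and $ab$ lie in $Z$ we get $\pi_i(ab)=\pi_i(a)\pi_i(b)$.
\end{enumerate}
Hence an $\bL_1\otimes\bL_2$\dash gainable biased graph is both $\bL_1$\dash gainable and $\bL_2$\dash gainable.

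For the converse, I would let $\gamma_1$ be an $\bL_1$\dash gaining and $\gamma_2$ an $\bL_2$\dash gaining of $\Omega=(G,\cB)$, and define the product gaining
\[
\gamma(\arc{a})=(\gamma_1(\arc{a}),\gamma_2(\arc{a}))\in\Gamma_1\times\Gamma_2 .
\]
This is a $(\Gamma_1\times\Gamma_2)$\dash gaining of $G$, since inverses are coordinatewise. For any walk $W$ the product is also coordinatewise, so $\gamma(W)=(\gamma_1(W),\gamma_2(W))$.

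Now take a simple closed walk $W$ in a cycle $C$. If $C\in\cB$, then $\gamma_1(W)=\identity{\Gamma_1}$ and $\gamma_2(W)=\identity{\Gamma_2}$, so $\gamma(W)$ is the identity of the product, which lies in $Z$. If $C\notin\cB$, then $\gamma_1(W)\in X_1-\{\identity{\Gamma_1}\}$ and $\gamma_2(W)\in X_2-\{\identity{\Gamma_2}\}$, so $\gamma(W)\in Z$ and is not the identity. Thus every closed-walk gain lies in $Z$, and a cycle has identity gain exactly when it is balanced, so $\gamma$ is an $\bL_1\otimes\bL_2$\dash gaining.

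The only point needing care is that the definition of an $\bL$\dash gaining quantifies over ``closed walks''. If this means every closed walk and not just simple ones, the gains of non-simple walks need not land in $Z$; for example, one coordinate could be trivial while the other is not. I would read the definition as referring to simple closed walks of cycles, consistent with the Remark following \cref{def:partial-group} and with its use in \cref{lem:standard_rep} and \cref{lem:homomorphisms}. Under that reading the argument above is complete, and nothing else is an obstacle. I would also quickly confirm, as the paper asserts, that $Z$ satisfies the partial-group axioms: it is closed under inverses and conjugation because each coordinate is, and these operations preserve being non-identity.
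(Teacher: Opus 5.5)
Your proposal is correct and follows the paper's proof: the coordinate projections together with \cref{lem:homomorphisms} give one direction, and the coordinatewise product gaining, checked on simple closed walks, gives the other. The only difference is that the paper first normalises both gainings along a spanning forest, so that each individual arc label lies in $\bL_1\otimes\bL_2$; you skip this step, which is harmless because the definition of an $\bL$\dash gaining constrains only the gains of closed walks.
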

\begin{proof}
    We start with the backward direction. Consider the projection maps
    \[\phi_1:\bL_1\otimes \bL_2\to\bL_1 \quad\text{and}\quad \phi_2:\bL_1\otimes \bL_2\to\bL_2,\]
where for $(x_{1},x_{2})\in\bL_1\otimes \bL_2$ we have $\phi_1(x_{1},x_{2})=x_{1}$ and $\phi_2(x_{1},x_{2})=x_{2}$.
    It is straightforward to verify that these are both partial-group homomorphisms.
    By \cref{lem:homomorphisms}, for any $\bL_1\otimes \bL_2$\dash gaining $\gamma$, for $i\in\{1,2\}$, we have $\phi_i\circ\gamma$ is an $\bL_i$\dash gaining.
    This proves the backward direction.

    We now consider the forward direction.
    Say the biased graph $\Omega=(G,\cB)$ has $\bL_1$\dash gaining $\gamma_1$ and $\bL_2$\dash gaining $\gamma_2$.
    We first normalise both along a spanning forest $T$.
    Note if $\arc{a}$ is an arc of $T$ or the unique circuit in $T\cup\{\arc{a}\}$ is balanced then both $\gamma_1(\arc{a})=\identity{\bL_1}$ and $\gamma_2(\arc{a})=\identity{\bL_2}$,
    while otherwise $\gamma_1(\arc{a})\in\bL_1-\{\identity{\bL_1}\}$ and $\gamma_2(\arc{a})\in\bL_2-\{\identity{\bL_2}\}$; so we observe that $(\gamma_1(\arc{a}),\gamma_2(\arc{a}))\in \bL_1\otimes \bL_2$.
    Consider the map $\gamma:A(G) \to\bL_1\otimes \bL_2$ where for an arc $\arc{a}$, we have $\gamma(\arc{a})=(\gamma_1(\arc{a}),\gamma_2(\arc{a}))$, which by the above is indeed in $\bL_1\otimes \bL_2$.
    Consider a simple closed walk $W$ of a cycle $C$ of $G$.
    Note that
    \[
    \gamma(W)=\prod_{\arc{a}\in W}(\gamma_1(\arc{a}),\gamma_2(\arc{a}))=
    \left(\prod_{\arc{a}\in W}\gamma_1(\arc{a}),\prod_{\arc{a}\in W}\gamma_2(\arc{a})\right)=
    (\gamma_1(W),\gamma_2(W)).
    \]
    As $\gamma_1$ and $\gamma_2$ are gainings, when $C$ is a balanced cycle,
    this is equal to $1_{\bL_1\otimes \bL_2}=(1_{\bL_1},1_{\bL_2})$,
    while if $C$ is not balanced, then
    \[(\gamma_1(W),\gamma_2(W))\in(\bL_1-\{\identity{\bL_1}\})\times(\bL_2-\{\identity{\bL_2}\}).\]
    Thus $\gamma(W)$ is indeed an element of $\bL_1\otimes \bL_2$ and $\gamma$ is a $\bL_1\otimes \bL_2$-gaining, as desired.
\end{proof}

The next theorem follows from \cref{lem:products}.
\begin{theorem}
\label{fundamentalthmforpartialgroups}
    For any finite set of groups $\mathcal{L}$, there exists a partial group~$\bL$ such that a biased graph is gainable over each group in $\mathcal{L}$ if and only if it is gainable over $\bL$.
\end{theorem}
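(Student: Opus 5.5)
We argue by induction on $|\mathcal{L}|$, using \cref{lem:products} as the inductive step. Partial products are built from partial groups, so the induction is really over finite lists of partial groups. Recall that each group $\Gamma$ is identified with the partial group $(\Gamma,\Gamma)$, and that a $\Gamma$\dash gaining is the same as a $(\Gamma,\Gamma)$\dash gaining.

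The base cases are as follows. If $\mathcal{L}$ is empty, the condition ``gainable over each group in $\mathcal{L}$'' holds for every biased graph. We then need a partial group over which every biased graph is gainable. One choice is the free group $\Gamma_0$ on the edge set: its fundamental-group-style construction realises any biased graph, as in the discussion after \cite{Funk15}. A cleaner choice is to simply require $\mathcal{L}$ to be nonempty, since the theorem is only of interest in that case. I will state that the empty case is either excluded or handled by taking $\bL=(\Gamma,\{\identity{\Gamma}\}\cup(\Gamma-\{\identity{\Gamma}\}))$ for a suitable $\Gamma$; in practice I would restrict to nonempty $\mathcal{L}$. If $\mathcal{L}=\{\Gamma\}$, take $\bL=(\Gamma,\Gamma)$.

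For the inductive step, write $\mathcal{L}=\mathcal{L}'\cup\{\Gamma\}$ with $\mathcal{L}'$ nonempty. By induction there is a partial group $\bL'$ such that a biased graph is gainable over every group in $\mathcal{L}'$ if and only if it is $\bL'$\dash gainable. Set $\bL=\bL'\otimes(\Gamma,\Gamma)$. This is a partial group by the remark preceding \cref{lem:products}. By \cref{lem:products}, $\Omega$ is $\bL$\dash gainable if and only if it is gainable over both $\bL'$ and $\Gamma$. By the inductive hypothesis, that holds if and only if $\Omega$ is gainable over every group in $\mathcal{L}$.

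There is no real obstacle here. The only points needing care are that \cref{lem:products} is stated for arbitrary partial groups rather than only for groups, which it is, and the degenerate empty-set case, which I would exclude by assuming $\mathcal{L}$ is nonempty.
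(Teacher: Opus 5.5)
Your induction, absorbing one group at a time into a partial product via \cref{lem:products}, is the paper's argument; the paper simply says the theorem follows from that lemma. It is correct for nonempty $\mathcal{L}$.

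The part that needs fixing is your treatment of $\mathcal{L}=\emptyset$, where both suggested fixes are wrong.

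\begin{itemize}
\item A free group does not gain every biased graph. By \cref{pmK3-exminor}, $\pm K_{3}$ is gainable only over groups with an element of order two, and free groups are torsion-free.
\item The pair $(\Gamma,\{\identity{\Gamma}\}\cup(\Gamma-\{\identity{\Gamma}\}))$ is just $(\Gamma,\Gamma)$, so it is no new option.
\end{itemize}

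More fundamentally, any $(\Gamma,X)$\dash gaining is in particular a $\Gamma$\dash gaining. There are biased graphs gainable over no group at all, which is why the paper's remark about $\pi_1(\mathcal K)$ is phrased as an equivalence. For example, take $5K_{3}$ with every $2$\dash edge cycle unbalanced and the balanced triangles given by a Latin square of order $5$ that is not isotopic to a group table. This satisfies the theta property, but any gaining would force the square to be isotopic to the Cayley table of a group of order $5$.

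So for $\mathcal{L}=\emptyset$ the statement as literally written is false. Restricting to nonempty $\mathcal{L}$ is therefore necessary, not just convenient, and the paper's one-line derivation tacitly makes the same restriction.
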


\begin{lemma}\label{lem:redundant-prod}
    If there is a partial-group homomorphism $\phi:\bL_1\to\bL_2$, then the partial product $\bL_1\otimes\bL_2$ is homomorphically equivalent to $\bL_1$.
\end{lemma}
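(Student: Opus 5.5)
We need to exhibit partial-group homomorphisms in both directions between $\bL_1\otimes\bL_2$ and $\bL_1$. One direction is already available: the projection $\phi_1\colon \bL_1\otimes\bL_2\to\bL_1$, $(x_1,x_2)\mapsto x_1$, was observed in the proof of \cref{lem:products} to be a partial-group homomorphism. So the only work is to construct a homomorphism $\psi\colon\bL_1\to\bL_1\otimes\bL_2$. The natural candidate is the graph map
\[
\psi(x)=(x,\phi(x)) \qquad (x\in X_1).
\]

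First we check that $\psi$ lands in $Z$. If $x=\identity{\bL_1}$, then condition~\ref{hom:id} of \cref{def:homomorphism} gives $\phi(x)=\identity{\bL_2}$, so $\psi(x)=(\identity{\Gamma_1},\identity{\Gamma_2})\in Z$. If $x\neq\identity{\bL_1}$, then $\phi(x)\in X_2$ and condition~\ref{hom:nonid} gives $\phi(x)\neq\identity{\bL_2}$. Thus $(x,\phi(x))\in X_1\times X_2$ with both coordinates non-identity, and so it lies in $Z$.

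Next we verify the three homomorphism axioms for $\psi$.
\begin{enumerate}[label = \textup{(\roman*)}]
\item $\psi(\identity{\bL_1})=\identity{\bL_1\otimes\bL_2}$ holds by the computation above.
\item If $x\neq\identity{\bL_1}$, then the first coordinate of $\psi(x)$ is $x\ne\identity{\Gamma_1}$, so $\psi(x)\neq\identity{\bL_1\otimes\bL_2}$.
\item If $a,b,ab\in X_1$, then $\phi(ab)=\phi(a)\phi(b)$ because $\phi$ is a homomorphism. Since multiplication in $\Gamma_1\times\Gamma_2$ is coordinatewise, $\psi(ab)=(ab,\phi(a)\phi(b))=(a,\phi(a))(b,\phi(b))=\psi(a)\psi(b)$.
\end{enumerate}

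Hence $\psi$ and $\phi_1$ witness the homomorphic equivalence of $\bL_1\otimes\bL_2$ and $\bL_1$. There is no real obstacle here; the only point needing care is membership of $\psi(x)$ in $Z$, which is exactly where conditions~\ref{hom:id} and~\ref{hom:nonid} on $\phi$ are used.
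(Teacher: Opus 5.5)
Your proposal is correct and follows the paper's proof: it uses the same two maps, namely the projection $(x_1,x_2)\mapsto x_1$ and the graph map $x_1\mapsto (x_1,\phi(x_1))$. Your explicit check that $\psi(x)$ lies in $Z$, using conditions~\ref{hom:id} and~\ref{hom:nonid} of $\phi$, fills in the verification that the paper leaves to the reader.
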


\begin{proof}
    Note we have the homomorphism $\psi_1:\bL_1\otimes\bL_2\to \bL_1$ where $\psi_1(x_{1},x_{2})=x_{1}$ for $(x_{1},x_{2})\in\bL_1\otimes\bL_2$.
    Conversely, we have the homomorphism $\psi_2:\bL_1\to\bL_1\otimes\bL_2$ where
    $\psi_2(x_{1})=(x_{1},\phi(x_{1}))$ for $x\in\bL_1$.
\end{proof}

\subsection{Regular biased graphs}
Recall that a biased graph is regular if it is $\Gamma$-gainable for every non-trivial group~$\Gamma$.
In this section, we consider
regular biased graphs.

Let $\left<\alpha\right>$ be the free group generated by the symbol~$\alpha$.
We define the \emph{regular partial group} to be $\bU_1 = (\left<\alpha\right>, \{1,\alpha,\alpha^{-1}\})$.
The following provides equivalent conditions for regularity; clearly it implies \cref{thm:main1}.
\begin{theorem}\label{thm:reg}
Let $\Omega$ be a biased graph.
The following statements are equivalent.
\begin{enumerate}[label = \textup{(\roman*)}]
\item\label{thm:reg:all_gr} $\Omega$ is regular.
\item\label{thm:reg:p_gr} $\Omega$ is $\bU_1$\dash gainable.
\item\label{thm:reg:bin_tern} $\Omega$ is gainable over $\ZZ_{2}$ and $\ZZ_{3}$.
\item\label{thm:reg:bin_nbin} $\Omega$ is gainable over $\ZZ_{2}$ and a non-trivial group with no element of order two.
\end{enumerate}
\end{theorem}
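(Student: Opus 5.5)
The plan is to prove the cycle of implications (ii)$\Rightarrow$(i)$\Rightarrow$(iii)$\Rightarrow$(iv)$\Rightarrow$(ii).

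\emph{(ii)$\Rightarrow$(i).} For any non-trivial group $\Gamma$, pick a non-identity element $g\in\Gamma$. There is a unique group homomorphism $\langle\alpha\rangle\to\Gamma$ sending $\alpha\mapsto g$. Restrict it to $\{1,\alpha,\alpha^{-1}\}$ and check the three conditions of \cref{def:homomorphism}:
\begin{enumerate}[label = (\alph*)]
\item the identity goes to $\identity{\Gamma}$;
\item the elements $\alpha^{\pm1}$ go to $g^{\pm1}\neq\identity{\Gamma}$;
\item whenever $a,b,ab$ all lie in $\{1,\alpha,\alpha^{-1}\}$, we have $\phi(ab)=\phi(a)\phi(b)$, because the map is the restriction of a genuine group homomorphism.
\end{enumerate}
So this is a partial-group homomorphism $\bU_1\to(\Gamma,\Gamma)$, and \cref{lem:homomorphisms} gives $\Gamma$\dash gainability.

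\emph{(i)$\Rightarrow$(iii)} is trivial.

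\emph{(iii)$\Rightarrow$(iv)} holds because $\ZZ_3$ has no element of order two.

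\emph{(iv)$\Rightarrow$(ii)} is the main content. Let $\Gamma$ be non-trivial with no involution. By \cref{lem:products}, $\Omega$ is gainable over $\ZZ_2\otimes\Gamma$. The partial group $\ZZ_2\otimes\Gamma$ consists of $(1,1)$ together with the pairs $(\omega,x)$ with $x\neq\identity{\Gamma}$. It suffices to show that $\Omega$ is $\bU_1$\dash gainable, and I would argue directly with gainings rather than hunt for a homomorphism $\ZZ_2\otimes\Gamma\to\bU_1$. Such a homomorphism need not exist for arbitrary $\Gamma$, since $\Gamma$ may contain many elements $x,y,xy$ that are all non-identity.

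Fix a maximal forest $T$. Apply \cref{lem:standard_rep} to the $\ZZ_2$\dash gaining to normalise it on $T$. Every non-tree edge $e$ then gets $\omega$ if its fundamental cycle is unbalanced and $1$ otherwise. The balanced cycles are thus exactly those containing an even number of "unbalanced" non-tree edges.

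Now normalise a $\Gamma$\dash gaining $\gamma$ on $T$ too, and define the candidate $\bU_1$\dash gaining $\rho$ as follows: $\rho=1$ on balanced-fundamental edges, and $\rho=\alpha^{\pm1}$ on the others. The sign should be read from $\gamma$, by choosing for each arc a sign $\epsilon(\arc{a})$ with $\gamma(\arc{a})$ "positively oriented". The worry is that $\Gamma$ is not cyclic, so "sign" is unclear. A cleaner route is to prove the theorem first with $\Gamma=\ZZ_3$, using the uniqueness machinery, and then reduce (iv) to (iii). The reduction would show that $\Gamma$\dash gainability together with $\ZZ_2$\dash gainability forces no $\pm K_3$ or $-K_4$ minor, by \cref{pmK3-exminor,-K4-exminor} and minor-closure. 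It would also rule out a $(4K_2,\emptyset)$ minor: such a minor is not $\ZZ_2$\dash gainable, since it contains $(3K_2,\emptyset)$ (\cref{thm:Zas81}). Then \cref{thm:main-theorem} yields $\ZZ_3$\dash gainability. So (iv)$\Rightarrow$(iii), and it remains to prove (iii)$\Rightarrow$(ii).

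For (iii)$\Rightarrow$(ii), I would take normalised $\ZZ_2$- and $\ZZ_3$\dash gainings $\gamma_2,\gamma_3$ on $T$. Define $\rho(\arc{a})=\alpha$ if $\gamma_3(\arc{a})=\omega$, $\rho(\arc{a})=\alpha^{-1}$ if $\gamma_3(\arc{a})=\omega^2$, and $\rho(\arc{a})=1$ otherwise. Then show that every simple closed walk $W$ has $\rho(W)\in\{1,\alpha,\alpha^{-1}\}$, with $\rho(W)=1$ exactly when the cycle is balanced. The natural approach is induction on $|C-T|$, splitting $C$ via a tree path into $C_e,C_f$ as in the proof of \cref{lem:homomorphisms}. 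Write $\rho(W)=\rho(W_e)\rho(W_f)$ with both factors in $\{1,\alpha^{\pm1}\}$. The product leaves $\{1,\alpha^{\pm1}\}$ only when $\rho(W_e)=\rho(W_f)=\alpha^{\pm1}$.

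In that bad case, both $C_e$ and $C_f$ are unbalanced, so $\gamma_2(W)=\omega\cdot\omega=1$ and $C$ is balanced. Meanwhile $\gamma_3(W)=\omega^{\pm2}\neq1$, so $C$ is unbalanced, which is a contradiction. Here I must check that the abelian image $\gamma_3(W)$ agrees with the image of $\rho(W)$ under $\alpha\mapsto\omega$; this holds since that map is a group homomorphism $\langle\alpha\rangle\to\ZZ_3$. Consistency of bias follows the same way: $\rho(W)=1$ forces $\gamma_3(W)=1$, hence $C$ is balanced, and conversely a balanced $C$ has $\gamma_3(W)=1$, and $\rho(W)\in\{1,\alpha^{\pm1}\}$ then forces $\rho(W)=1$.

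The main obstacle I expect is making this induction rigorous: ensuring that $\rho(W)$ genuinely factors as $\rho(W_e)\rho(W_f)$ with the inductive hypothesis applying to walks based at the right vertex. Conjugation is harmless here, since $\langle\alpha\rangle$ is abelian.
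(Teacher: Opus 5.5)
Your proof is correct. Your (ii)$\Rightarrow$(i) and the trivial implications match the paper, but your route from (iv) to (ii) is genuinely different and much heavier.

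The paper goes straight from (iv) to (ii) for any involution-free $\Gamma$. It splits $\Gamma-\{\identity{\Gamma}\}$ into sets $A,B$ containing one element from each pair $\{g,g^{-1}\}$, which is possible exactly because $g\neq g^{-1}$. It then sends $(\omega,g)\mapsto\alpha$ for $g\in A$ and $(\omega,g)\mapsto\alpha^{-1}$ for $g\in B$, and checks that this is a partial-group homomorphism $\psi\colon\ZZ_2\otimes\Gamma\to\bU_1$. \cref{lem:products,lem:homomorphisms} then finish the argument.

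Your stated reason for avoiding this map is mistaken. In $\ZZ_2\otimes\Gamma$ the product $(\omega,x)(\omega,y)=(1,xy)$ lies in the partial group only when $xy=\identity{\Gamma}$. So condition (iii) of \cref{def:homomorphism} constrains only inverse pairs, and triples $x,y,xy$ of non-identity elements impose nothing.

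Your detour (iv)$\Rightarrow$(iii) goes through \cref{thm:main-theorem}, \cref{pmK3-exminor}, \cref{-K4-exminor}, and the non-$\ZZ_2$\dash gainability of $(4K_2,\emptyset)$. It is valid and not circular, since the proof of \cref{thm:main-theorem} uses no partial-group material. However, it makes this theorem depend on the paper's deepest result. The paper keeps \cref{thm:main1} independent of it, which is what makes \cref{thm:main2} a genuine combination of two separate ingredients.

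Your hands-on proof of (iii)$\Rightarrow$(ii) is the paper's argument unfolded for $\Gamma=\ZZ_3$:
\begin{itemize}
\item Your $\rho$ is $\psi\circ(\gamma_2,\gamma_3)$ with $A=\{\omega\}$ and $B=\{\omega^{2}\}$.
\item Your ``bad case'' is the fact that squaring the pair $(\omega,\omega)$ gives $(1,\omega^{2})\notin\ZZ_2\otimes\ZZ_3$, i.e.\ the content of \cref{lem:products}.
\item The induction you flag as the main obstacle is exactly the one already carried out in the proof of \cref{lem:homomorphisms}.
\end{itemize}
The only thing special to $\ZZ_3$ is your shortcut of tracking bias through the group homomorphism $\langle\alpha\rangle\to\ZZ_3$, $\alpha\mapsto\omega$. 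For general $\Gamma$ no such map exists, and the inductive identity $\rho(W)=\psi(\gamma(W))$ from \cref{lem:homomorphisms} replaces it. With that identity, the same induction proves (iv)$\Rightarrow$(ii) directly, with no need for the excluded-minor theorem.
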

\begin{proof}
    We first show that~\ref{thm:reg:p_gr} implies~\ref{thm:reg:all_gr}.
    Let $\Gamma$ be a non-trivial group.
    That is to say, $\Gamma$ has some non-identity element $g\neq \identity{\Gamma}$.
    Consider the map $\phi:\bU_1\to \Gamma$ where $\phi(1)=\identity{\Gamma}$, $\phi(\alpha)=g$, and $\phi(\alpha^{-1})=g^{-1}$ (note $g^{-1}\neq \identity{\Gamma}$ but need not be distinct from $g$).
    It is straightforward to verify that this is a partial-group homomorphism; note $\alpha\alpha,\alpha^{-1}\alpha^{-1}\notin\bU_1$, so we do not care about the value of $\phi(\alpha)\phi(\alpha)=g^2$ or $\phi(\alpha^{-1})\phi(\alpha^{-1})=g^{-2}$.
    By \cref{lem:homomorphisms}, if a biased graph $\Omega$ is $\bU_1$\dash gainable, then it is indeed also $\Gamma$\dash gainable.

    Note that~\ref{thm:reg:all_gr} immediately implies~\ref{thm:reg:bin_tern} and~\ref{thm:reg:bin_tern} immediately implies~\ref{thm:reg:bin_nbin}.
We complete the proof by showing that~\ref{thm:reg:bin_nbin} implies~\ref{thm:reg:p_gr}.
    Recall $\ZZ_2$ is the multiplicative group $\{1,\omega\}$ with $\omega^2=1$.
    Let $\Gamma$ be a non-trivial group with no element of order two.
    Thus $\{\{g,g^{-1}\}\colon g\in \Gamma-\{\identity{\Gamma}\}\}$ is a collection of sets of size two.
    By choosing one from each of these pairs to be in $A\subseteq \Gamma-\{\identity{\Gamma}\}$ and the other to be in $B\subseteq \Gamma-\{\identity{\Gamma}\}$, we have a partition $(A,B)$ of $\Gamma-\{\identity{\Gamma}\}$ where each element and its inverse are in distinct parts.
    Let $\psi:\ZZ_2\otimes \Gamma\to\bU_1$ be the map where
    $(\identity{\ZZ_2},\identity{\Gamma})\mapsto \identity{\bU_{1}}$ and, for any $g\in \Gamma-\{\identity{\Gamma}\}$, we have $(\omega,g)\mapsto\alpha$ if $g\in A$, while $(\omega,g)\mapsto\alpha^{-1}$ if $g\in B$.
    Note Properties~\ref{hom:id} and~\ref{hom:nonid} of \cref{def:homomorphism} hold for $\psi$ by construction.
    Note if $(\omega,g)(\omega,h)=(1_{\ZZ_2},gh)$ is in $\ZZ_2\otimes \Gamma$, then $h=g^{-1}$, so 
    $g$ and $h$ are in distinct parts of $(A,B)$, in which case $\{\psi(\omega,g),\psi(\omega,h)\}=\{\alpha,\alpha^{-1}\}$, and so $\psi(\omega,g)\psi(\omega,h)=1_{\bU_1}=\psi((\omega,g)(\omega,h))$. It is now straightforward to check that Property~\ref{hom:prod} of \cref{def:homomorphism} also holds for $\psi$, and $\psi$ is thus a partial-group homomorphism.
    Suppose that a biased graph $\Omega$ is gainable over both $\ZZ_2$ and $\Gamma$.
    By Lemma~\ref{lem:products}, we have that $\Omega$ is $\ZZ_2\otimes \Gamma$\dash gainable.
    As we have shown $\psi$ to be a homomorphism, by Lemma~\ref{lem:homomorphisms}, we have that $\Omega$ is $\bU_1$\dash gainable.
    We have thus shown that the four hypotheses of our theorem are equivalent.
\end{proof}

\subsection{Near-regular biased graphs}
We say that a biased graph is \emph{near-regular} if it is gainable over all groups with size at least three.
In this section, we consider near-regular biased graphs.

Let $\left<\alpha,\beta\right>^{\ab}$ denote the free abelian group generated by distinct symbols $\alpha$ and $\beta$.
Note that because $\left<\alpha,\beta\right>^{\ab}$ is abelian, any subset that contains the identity and which is closed under taking inverses will comprise a partial group.
We define the \emph{near-regular partial group} to be
\[\bU^{\ab}_2=(\langle\alpha,\beta\rangle^{\ab},\{\identity{},\alpha,\alpha^{-1},\beta,\beta^{-1},\alpha\beta^{-1},\alpha^{-1}\beta\}).\]
The following provides equivalent conditions for near-regularity; clearly it implies \cref{thm:main3}.
\begin{theorem}\label{thm:n-reg}
Let $\Omega$ be a biased graph.
The following statements are equivalent.
\begin{enumerate}[label = \textup{(\roman*)}]
\item\label{thm:n-reg:3biggr} $\Omega$ is near-regular.
\item\label{thm:n-reg:p_gr} $\Omega$ is $\bU^{\ab}_2$\dash gainable.
\item\label{thm:n-reg:3andKlein} $\Omega$ is $\ZZ_3$\dash gainable and $\ZZ^2_2$\dash gainable.
\end{enumerate}
\end{theorem}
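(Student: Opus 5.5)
The plan follows the proof of \cref{thm:reg}: establish (ii)$\Rightarrow$(i) by partial-group homomorphisms, note that (i)$\Rightarrow$(iii) is trivial since $\ZZ_3$ and $\ZZ_2^2$ have size at least three, and prove (iii)$\Rightarrow$(ii) by composing \cref{lem:products} with a homomorphism out of $\ZZ_3\otimes\ZZ_2^2$.

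For (ii)$\Rightarrow$(i), let $\Gamma$ be a group with $|\Gamma|\ge 3$. I will construct a partial-group homomorphism $\phi\colon\bU_2^{\ab}\to\Gamma$ and then apply \cref{lem:homomorphisms}. It suffices to choose $g,h\in\Gamma$ with $g\ne\identity{\Gamma}$, $h\ne\identity{\Gamma}$ and $g\ne h$. Set $\phi(\alpha)=g$, $\phi(\alpha^{-1})=g^{-1}$, $\phi(\beta)=h$, $\phi(\beta^{-1})=h^{-1}$, $\phi(\alpha\beta^{-1})=gh^{-1}$ and $\phi(\alpha^{-1}\beta)=hg^{-1}$.

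Property~\ref{hom:nonid} holds because $g$, $h$ and $gh^{-1}$ are all non-identity. For property~\ref{hom:prod}, I need to list the pairs $a,b$ in the set for which $ab$ is again in the set. These are the pairs involving the identity, the pairs $\{x,x^{-1}\}$, and the products $\alpha\cdot\beta^{-1}$, $\beta^{-1}\cdot\alpha$, $\alpha^{-1}\cdot\beta$, $\beta\cdot\alpha^{-1}$, $\alpha\beta^{-1}\cdot\beta=\alpha$, $\beta\cdot\alpha\beta^{-1}$, and so on. Since $\Gamma$ may be nonabelian, the order matters. For example, $\phi(\beta^{-1}\alpha)=\phi(\alpha\beta^{-1})$ requires $gh^{-1}=h^{-1}g$.

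This is the main obstacle. I would first try choosing $g$ and $h$ to commute, for instance $h=g^{2}$ when $g$ has order at least three, or $g,h$ in a Klein subgroup. Every group of size at least three contains either an element of order at least three or two distinct commuting involutions. Indeed, if every non-identity element is an involution, the group is abelian, so any two distinct involutions commute. With commuting $g,h$, each condition in property~\ref{hom:prod} becomes an identity in an abelian group and can be checked routinely.

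For (iii)$\Rightarrow$(ii), \cref{lem:products} shows that $\Omega$ is $\ZZ_3\otimes\ZZ_2^2$\dash gainable. It then suffices to give a homomorphism $\psi\colon\ZZ_3\otimes\ZZ_2^2\to\bU_2^{\ab}$ and apply \cref{lem:homomorphisms}. The non-identity elements of the partial product are the twelve pairs $(\omega^{\pm1},x)$ with $x\in\{a,b,ab\}$.

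The target set consists of three inverse pairs, $\{\alpha^{\pm1}\}$, $\{\beta^{\pm1}\}$ and $\{(\alpha\beta^{-1})^{\pm1}\}$, corresponding to the three Klein elements. I therefore define
\[
\psi(\omega,a)=\alpha,\quad \psi(\omega,b)=\beta,\quad \psi(\omega,ab)=\alpha^{-1}\beta,
\]
and $\psi(\omega^{2},x)=\psi(\omega,x)^{-1}$.

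I then check property~\ref{hom:prod}. Products landing back in the set are of two kinds. A product of the form $(\omega,x)(\omega^{2},x)=1$ maps correctly, since the images are inverses. A product of the form $(\omega^{i},x)(\omega^{j},y)$ with $x\ne y$ must have $\omega^{i+j}\ne1$, so $i=j$. Then $(\omega,a)(\omega,b)=(\omega^{2},ab)\mapsto\alpha\beta^{-1}$, whereas $\psi(\omega,a)\psi(\omega,b)=\alpha\beta$, which fails. So the sign assignment must be adjusted to resemble a $\ZZ_3$ labelling. I would instead send $(\omega,a)\mapsto\alpha$, $(\omega,b)\mapsto\beta^{-1}$ and $(\omega,ab)\mapsto\alpha^{-1}\beta$, and test whether this works. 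If all such assignments fail, I would enlarge the argument by recognising that $\bU_2^{\ab}$ must be matched to the product through a more careful choice. Verifying this cocycle-type consistency of signs across the three products $(\omega^{i},x)(\omega^{i},y)$ is the step I expect to require the most care.
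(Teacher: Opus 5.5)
Your plan matches the paper's proof. You do (ii)$\Rightarrow$(i) with a homomorphism $\bU_2^{\ab}\to\Gamma$ and \cref{lem:homomorphisms}, (i)$\Rightarrow$(iii) trivially, and (iii)$\Rightarrow$(ii) with \cref{lem:products} followed by a homomorphism $\ZZ_3\otimes\ZZ_2^2\to\bU_2^{\ab}$.

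The one point you leave open is whether your second assignment works. It does: $(\omega,a)\mapsto\alpha$, $(\omega,b)\mapsto\beta^{-1}$, $(\omega,ab)\mapsto\alpha^{-1}\beta$, extended by inverses, is exactly the paper's map. Write $p,q,r$ for the images of $(\omega,a),(\omega,b),(\omega,ab)$. Apart from products involving the identity or an inverse pair, the only products that stay in the partial product are $(\omega,x)(\omega,y)=(\omega^{2},xy)$ and $(\omega^{2},x)(\omega^{2},y)=(\omega,xy)$ for distinct $x,y\in\{a,b,ab\}$. Since the target is abelian, all of these conditions reduce to the single relation $pqr=1$. This holds for $\alpha\cdot\beta^{-1}\cdot\alpha^{-1}\beta$, whereas your first attempt gives $pqr=\beta^{2}$. (Also, there are six non-identity elements $(\omega^{\pm1},x)$, not twelve.)

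On (ii)$\Rightarrow$(i), your concern about non-commuting $g,h$ is justified, and your version is more careful than the paper's. The paper takes arbitrary distinct non-identity $g,h$ and sends $\alpha^{-1}\beta\mapsto g^{-1}h$. But then $\alpha\beta^{-1}\cdot\alpha^{-1}\beta=1$ requires $gh^{-1}g^{-1}h=1$, so the map is a partial-group homomorphism only when $g$ and $h$ commute. Your choice always supplies commuting $g,h$: either an element of order at least three together with its square, or two distinct involutions, in which case the group is abelian. With such a choice, $\phi$ is the restriction of a genuine group homomorphism $\langle\alpha,\beta\rangle^{\ab}\to\langle g,h\rangle$. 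Condition (iii) of \cref{def:homomorphism} is then automatic, and condition (ii) holds because $g$, $h$ and $gh^{-1}$ are non-identity.
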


\begin{proof}
    We first show that~\ref{thm:n-reg:p_gr} implies~\ref{thm:n-reg:3biggr}.
    Let $\Gamma$ be a group of size at least three.
    That is to say, $\Gamma$ has distinct non-identity elements $g,h\neq 1_\Gamma$.
    Consider the map $\phi:\bU_1\to \Gamma$ where
        \[
    \begin{array}{llll}
         1_{\bU^{\ab}_2} \mapsto 1_\Gamma,\quad   & \alpha \mapsto g,         & \beta \mapsto h,& \alpha\beta^{-1} \mapsto gh^{-1}, \\
                            & \alpha^{-1}\mapsto g^{-1},\quad  & \beta^{-1}\mapsto h^{-1}, & \alpha^{-1}\beta\mapsto g^{-1}h.
    \end{array}
    \]
    It is straightforward to verify that this is a partial-group homomorphism; as $g$ and $h$ are distinct, neither $gh^{-1}$ nor $g^{-1}h$ are equal to $1_\Gamma$.
    By \cref{lem:homomorphisms}, if a biased graph $\Omega$ is $\bU^{\ab}_2$\dash gainable, then it is indeed also $\Gamma$\dash gainable.

    Note that~\ref{thm:n-reg:3biggr} immediately implies~\ref{thm:n-reg:3andKlein}.

    Finally, we show~\ref{thm:n-reg:3andKlein} implies~\ref{thm:n-reg:p_gr}.
    We write $\ZZ_3 = \langle z \mid z^3 = 1 \rangle $ and $\ZZ_2^2 = \langle x,y \mid x^2=1, y^2=1 \rangle^ \ab$.
    Let $\psi : \ZZ_3 \otimes \ZZ^2_2 \rightarrow \bU_2^\ab$ be the function where
    \[
    \begin{array}{llll}
         (1,1) \mapsto 1,\quad   & (z,x) \mapsto \alpha,         & (z,y) \mapsto \beta^{-1},\quad & (z,xy) \mapsto \alpha^{-1}\beta, \\
                            & (z^2,x) \mapsto \alpha^{-1},\quad  & (z^2,y) \mapsto \beta, & (z^2,xy) \mapsto \alpha\beta^{-1}.
    \end{array}
    \]
    It is straightforward to check that $\psi$ is a partial-group homomorphism.  
    We now suppose that a biased graph $\Omega$ is gainable over both $\ZZ_3$ and $\ZZ^2_2$.
    By Lemma~\ref{lem:products}, we have that $\Omega$ is $\ZZ_3\otimes \ZZ^2_2$\dash gainable.
    Because $\psi$ is a partial-group homomorphism, by Lemma~\ref{lem:homomorphisms}, we have that $\Omega$ is $\bU^{\ab}_2$\dash gainable.
    We have thus shown that the three hypotheses of our theorem are equivalent.
    \end{proof}

\subsection{Biased graphs gainable over \texorpdfstring{$\ZZ_3$}{Z3} and another cyclic group}

In this section, we consider when a biased graph is gainable over $\ZZ_3$ and another cyclic group $\ZZ_n$.
Say $\ZZ_3=\{1,x,x^2\}$ and $\ZZ_n=\{1,y,\dots,y^{n-1}\}$.
By Lemma~\ref{lem:redundant-prod}, if $n$ is divisible by three, then $\ZZ_3\otimes\ZZ_n$ is homomorphically equivalent to $\ZZ_3$. 

Suppose that $n$ is not divisible by three, so $\ZZ_3\times\ZZ_n\cong\ZZ_{3n}$ by Sunzi's Theorem.
By taking $\omega=(x,y)$, we observe that $\omega^i=(x^{\text{$i$ mod 3}},y^{\text{$i$ mod $n$}})$, so
\[\ZZ_3 \otimes \ZZ_n = (\langle\omega \mid \omega^{3n} = 1\rangle, \{1\}\cup\{\omega^i \mid \textrm{$3$ and $n$ do not divide $i$}\}).\]

We consider the example $n=4$.
\begin{corollary}
    A biased graph is gainable over $\ZZ_3$ and $\ZZ_4$ if and only if it is $\bS_2$\dash gainable where $\bS_2=(\langle\alpha\rangle,\{1,\alpha,\alpha^2,\alpha^{-1},\alpha^{-2}\})$.
\end{corollary}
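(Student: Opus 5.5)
By \cref{lem:products}, a biased graph is gainable over both $\ZZ_3$ and $\ZZ_4$ if and only if it is gainable over $\ZZ_3\otimes\ZZ_4$. So it suffices to show that $\ZZ_3\otimes\ZZ_4$ and $\bS_2$ are homomorphically equivalent. We then finish with the proposition following \cref{lem:homomorphisms}, which says that homomorphically equivalent partial groups have the same gainable biased graphs.

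Since $3\nmid 4$, the displayed description above gives
\[
\ZZ_3\otimes\ZZ_4=(\langle\omega\mid\omega^{12}=1\rangle,\{1\}\cup\{\omega^i : 3\nmid i,\ 4\nmid i\}).
\]
Reducing $i$ mod $12$, the admissible non-zero exponents are $1,2,5,7,10,11$, that is, $\pm1,\pm2,\pm5$. The identity of $\bS_2$ should be read with the convention $\alpha^0=1$; it is a partial group because the free group is abelian and the set is closed under inverses.

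\emph{First map, $\bS_2\to\ZZ_3\otimes\ZZ_4$.} Send $\alpha^k\mapsto\omega^k$ for $k\in\{0,\pm1,\pm2\}$; each image lies in the set above.
\begin{enumerate}[label = \textup{(\roman*)}]
\item Properties (i) and (ii) of \cref{def:homomorphism} are immediate.
\item For (iii), if $\alpha^a,\alpha^b,\alpha^{a+b}$ all lie in $\bS_2$, then $|a|,|b|,|a+b|\le2$ in the free group. Hence $\omega^a\omega^b=\omega^{a+b}$ holds.
\end{enumerate}

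\emph{Second map, $\ZZ_3\otimes\ZZ_4\to\bS_2$.} This is the main obstacle, since we must collapse $\pm5$ consistently. Let $\psi(\omega^{\pm1})=\alpha^{\pm1}$ and $\psi(\omega^{\pm2})=\alpha^{\pm2}$. For $\omega^{\pm5}$, the natural choice is to note $5\equiv-7$ and try $\psi(\omega^{5})=\alpha^{-2}$, $\psi(\omega^{-5})=\alpha^{2}$. (Equivalently, multiply exponents by the unit $5$ mod $12$, which permutes $\{\pm1,\pm2,\pm5\}$ as $1\mapsto5$, $2\mapsto10$, $5\mapsto1$; a smarter representative choice may be needed.)

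Next, list all triples $a,b,a+b$ in $\{0,\pm1,\pm2,\pm5\}$ mod $12$ and check property (iii) for each. The nonzero sums are:
\begin{itemize}
\item $1+1=2$;
\item $-1-1=-2$;
\item $2+5=7\equiv-5$;
\item $-2-5=5$;
\item $1+(-2)=-1$ and $-1+2=1$;
\item $5+5=10\equiv-2$;
\item $-5-5=2$;
\item $1+(-5)$, which gives $-4$, not admissible.
\end{itemize}
There are also the inverse pairs $a+(-a)=0$.

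With the choice above, check $\omega^5\omega^5=\omega^{-2}$: we need $\psi(\omega^5)^2=\alpha^{-2}$. This forces $\psi(\omega^5)=\alpha^{-1}$, not $\alpha^{-2}$. So instead set $\psi(\omega^{\pm5})=\alpha^{\mp1}$, and recheck the remaining relations:
\begin{itemize}
\item $2+5\equiv-5$: $\alpha^2\alpha^{-1}=\alpha$ should equal $\psi(\omega^{-5})=\alpha$. This holds.
\item $-2-5\equiv5$: $\alpha^{-2}\alpha=\alpha^{-1}=\psi(\omega^5)$. This holds.
\item $5+5\equiv-2$: $\alpha^{-1}\alpha^{-1}=\alpha^{-2}$. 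This holds.
\item $-5-5\equiv2$: $\alpha\alpha=\alpha^2$. This holds.
\item Other pairs, such as $1+5=6$ and $2+2=4$, land outside the set and impose nothing.
\end{itemize}
Property (ii) is clear, so $\psi$ is a partial-group homomorphism.

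Having homomorphisms in both directions, $\ZZ_3\otimes\ZZ_4$ and $\bS_2$ are homomorphically equivalent. Combining this with \cref{lem:products} gives the corollary.
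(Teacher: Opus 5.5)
Your proof is correct and follows the paper's route: you reduce via \cref{lem:products} to showing that $\ZZ_3\otimes\ZZ_4$ (exponents $\{0,\pm1,\pm2,\pm5\}$ mod $12$) is homomorphically equivalent to $\bS_2$. The paper factors this equivalence through the intermediate partial group $(\langle\alpha,\beta,\gamma\mid\alpha^2=\beta=\gamma^2\rangle^{\ab},\{1,\alpha^{\pm1},\beta^{\pm1},\gamma^{\pm1}\})$, using $\omega\mapsto\alpha$, $\omega^2\mapsto\beta$, $\omega^7\mapsto\gamma$ and then $\gamma\mapsto\alpha$; the composite is exactly your $\psi$ (with $\psi(\omega^{\pm5})=\alpha^{\mp1}$), and your exhaustive check of the admissible sums simply makes explicit what the paper leaves to the reader.
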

\begin{proof}
By the above discussion,
\[
\ZZ_3 \otimes \ZZ_4 =(\langle\omega \mid \omega^{3n} = 1\rangle, \{1,\omega,\omega^2,\omega^5,\omega^7,\omega^{10},\omega^{11})\}
\]
By taking $\omega\mapsto\alpha$, $\omega^2\mapsto \beta$ and $\omega^7\mapsto\gamma$, we find that this is homomorphically equivalent to 
\[(\langle\alpha,\beta,\gamma \mid \alpha^2=\beta=\gamma^2\rangle^\ab, \{1,\alpha,\alpha^{-1},\beta,\beta^{-1},\gamma,\gamma^{-1}\}),
\]
as we only care about products within the partial group.
By taking $\alpha\mapsto\alpha$, $\beta\mapsto\alpha^2$, and $\gamma\mapsto\alpha$, we have that this is also homomorphically equivalent to $\bS_2=(\langle\alpha\rangle,\{1,\alpha,\alpha^2,\alpha^{-1},\alpha^{-2}\})$.
As $\ZZ_3\otimes\ZZ_4$ is homomorphically equivalent to $\bS_2$, the theorem follows immediately from \cref{lem:products,lem:homomorphisms}.
\end{proof}

It is easy to see that there is a partial-group homomorphism from $\mathsf{S}_2$ to any group with an element of order at least three.  Thus, the above corollary implies \cref{thm:main4}.

\end{document}